\author{Florin Ambro} 
\address{Institute of Mathematics ``Simion Stoilow'' of the Romanian
Academy\\
P.O. BOX 1-764, RO-014700 Bucharest\\ 
Romania.}
\email{florin.ambro@imar.ro}
\newcommand{\isoto}{{\overset{\sim}{\rightarrow}}}
\newcommand{\Q}{{\mathbb Q}}
\newcommand{\Z}{{\mathbb Z}}
\newcommand{\N}{{\mathbb N}}
\newcommand{\R}{{\mathbb R}}
\newcommand{\bP}{{\mathbb P}} 
\newcommand{\bA}{{\mathbb A}} 
\newcommand{\cB}{{\mathcal B}}
\newcommand{\cF}{{\mathcal F}}
\newcommand{\cG}{{\mathcal G}}
\newcommand{\cH}{{\mathcal H}}
\newcommand{\cX}{{\mathcal X}}
\newcommand{\cZ}{{\mathcal Z}}
\newcommand{\cL}{{\mathcal L}}
\newcommand{\cO}{{\mathcal O}}
\newcommand{\fm}{{\mathfrak m}}
\newcommand{\Bs}{\operatorname{Bs}}
\newcommand{\codim}{\operatorname{codim}}
\newcommand{\Conv}{\operatorname{Conv}}
\newcommand{\dv}{\operatorname{div}}
\newcommand{\emb}{\operatorname{emb}}
\newcommand{\Fix}{\operatorname{Fix}}
\newcommand{\Int}{\operatorname{int}}
\newcommand{\lct}{\operatorname{lct}}
\newcommand{\mld}{\operatorname{mld}}
\newcommand{\mult}{\operatorname{mult}}
\newcommand{\relint}{\operatorname{relint}}
\newcommand{\Sing}{\operatorname{Sing}}
\newcommand{\Supp}{\operatorname{Supp}}
\newcommand{\vol}{\operatorname{vol}}
\newcommand{\width}{\operatorname{width}}
\theoremstyle{plain}
\newtheorem{thm}{Theorem}[section]
\newtheorem{question}[thm]{Question}
\newtheorem{lem}[thm]{Lemma}
\newtheorem{cor}[thm]{Corollary}
\newtheorem{prop}[thm]{Proposition}
\theoremstyle{definition}
\newtheorem{exmp}[thm]{Example}
\newtheorem{exmps}[thm]{Examples}
\theoremstyle{remark}
\begin{document}

\bibliographystyle{amsalpha+}
\title{Variation of log canonical thresholds in linear systems}
\maketitle

\begin{center}
{\em Dedicated to the memory of Professor \c{S}erban Basarab}
\end{center}

\begin{abstract} 
We investigate the variation of log canonical thresholds in (graded) linear systems. 
For toric log Fano varieties, we give a sharp lower bound
for log canonical thresholds of the anticanonical members in terms of the 
global minimal log discrepancy.
\end{abstract}



\footnotetext[1]
{
This work was supported by a grant of the 
Romanian National Authority for Scientific Research, 
CNCS - UEFISCDI, project number PN-II-RU-TE-2011-3-0097.
}
\footnotetext[2]{2010 Mathematics Subject Classification. 
Primary: 14B05. Secondary: 14J45.}

\footnotetext[3]{Keywords: log variety, log canonical threshold.}


\section*{Introduction}


An interesting question in the classification theory of algebraic varieties is the 
conjecture of Alexeev and Borisov brothers: (singular) Fano varieties $X$
of fixed dimension $d$ and with minimal log discrepancy $\mld(X)\ge \epsilon>0$, 
belong to a bounded family. This conjecture is known if $d=2$~\cite{Ale94} or 
$X$ is a toric variety~\cite{BB92}. To a Fano variety $X$ we associate the 
$\alpha$-invariant 
$$
\gamma(X)=\inf\{\lct(X;\frac{D_n}{n});n\ge 1,D_n\in |-nK_X|\}.
$$
It is known that $\gamma(X)\cdot \sqrt[d]{(-K_X)^d}\le d$, so the anticanonical volume 
is bounded above if the $\alpha$-invariant is bounded away from zero. It follows that the above conjecture 
reduces to two local statements: a) a lower bound $\gamma(X)\ge \gamma(d,\epsilon)>0$,
and b) an upper bound $r\le r(d,\epsilon)$ for the smallest integer $r\ge 1$ such that $rK_X$ is Cartier.
The first aim of this paper is to propose a sharp lower bound $\gamma(d,\epsilon)$,
and to establish it in the toric case.

\begin{thm}\label{MT}
Let $(X,B)$ be a toric log Fano variety, with $\dim X=d$ and 
$\mld(X,B)\ge \frac{1}{q}$, for some integer $q\ge 1$. Then 
$$
\gamma(X,B)=\inf\{\lct(X,B;\frac{D_n}{n});n\ge 1,D_n\in |-nK_X-nB |\}\ge \frac{q}{u_{d+1,q}},
$$
where $(u_{p,q})_{p,q\ge 1}$ is the sequence of integers defined recursively by 
$u_{1,q}=q,u_{p+1,q}=u_{p,q}(1+u_{p,q})$.
\end{thm}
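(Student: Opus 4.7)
The plan is to translate the theorem into a sharp convex-geometric bound on the rational polytope $P\subset M_\R$ of $-(K_X+B)$ and prove that bound by induction on $d$, with the recursion $u_{p+1,q}=u_{p,q}(1+u_{p,q})$ appearing as the bookkeeping of a one-step toric adjunction.

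First I would carry out the toric reformulation. Let $\phi\colon N_\R\to\R$ be the support function of $-P$; for a primitive $v\in N\setminus\{0\}$ the value $\phi(v)$ equals the log discrepancy of $(X,B)$ at the corresponding toric valuation, so $\mld(X,B)\ge 1/q$ is equivalent to $\phi(v)\ge 1/q$ for every primitive $v$. For $u\in nP\cap M$ the torus-invariant member $(\chi^u)_0\in|-n(K_X+B)|$ satisfies $\ord_v((\chi^u)_0)=\langle u,v\rangle+n\phi(v)$, so
\[
\lct(X,B;(\chi^u)_0/n)=\min_v \frac{n\phi(v)}{\langle u,v\rangle+n\phi(v)}\,.
\]
By lower semi-continuity of $\lct$ under the torus action on $|-n(K_X+B)|$, whose fixed points are exactly the monomial members, the infimum of $\lct$ over the full linear system is attained on monomial members. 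Letting $n\to\infty$ so that $u/n$ becomes dense in $P$ and using $\max_{w\in P}\langle w,v\rangle=\phi(-v)$, one obtains
\[
\gamma(X,B)\;=\;\frac{1}{1+\sup_{v\ \mathrm{primitive}}\phi(-v)/\phi(v)}\,.
\]
The theorem becomes equivalent to the polytope width inequality $\phi(-v)+\phi(v)\le(u_{d+1,q}/q)\,\phi(v)$ for every primitive $v\in N$.

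Next I would prove this inequality by induction on $d$. The base case $d=1$ is a direct interval computation: $P=[-\phi(v),\phi(-v)]$ with both endpoints in $[1/q,1]$, giving the bound $u_{2,q}/q=1+q$ that is tight for $(\bP^1,(1-1/q)\cdot p)$. For the inductive step, fix a primitive $v\in N$ realizing the supremum, let $\tau\in\Sigma$ be the maximal cone containing $-v$, and let $u_\tau$ be the corresponding vertex of $P$; then
\[
\phi(-v)=\langle u_\tau,v\rangle=\sum_{\rho\in\tau(1)}a_\rho\,\phi(v_\rho)
\]
with $a_\rho\ge 0$ coming from the decomposition $-v=\sum_\rho a_\rho v_\rho$. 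Choose an extremal ray $\rho_0\in\tau(1)$ in this decomposition and restrict $(X,B)$ to the invariant toric divisor $V(\rho_0)$; by toric adjunction this is a $(d-1)$-dimensional toric log Fano whose minimal log discrepancy is still $\ge 1/q$ (the coefficients of the toric different lie in $[0,1-1/q]$). Applying the inductive bound to $V(\rho_0)$ and combining it with the decomposition on $\tau$ and the bound $\phi(v)\ge 1/q$, the algebraic identity $u_{d+1,q}=u_{d,q}(1+u_{d,q})$ is exactly what is needed to close the induction.

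The main obstacle is extracting the \emph{sharp} Sylvester factor $(1+u_{d,q})$: a cruder estimate would produce only a weaker factor such as $2u_{d,q}$ or $u_{d,q}^2$. Obtaining the sharp factor requires a careful choice of descending ray $\rho_0\in\tau(1)$ tied to the arithmetic of the decomposition $-v=\sum_\rho a_\rho v_\rho$, a precise handling of the toric different arising in adjunction so that both the MLD bound and the extremal quantities descend cleanly, and a verification that the extremal configurations — Sylvester-type weighted projective spaces such as $\bP(1,2,3)$ for $d=2,\ q=1$, whose anticanonical degrees realize $u_{d+1,q}$ — saturate the inequality at each stage of the induction.
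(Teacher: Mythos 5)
Your toric reformulation is correct and parallels the paper's setup: the paper likewise reduces the claim to $\gamma(0\in\square_{-K_X-B})\ge q/u_{d+1,q}$ for the moment polytope, after first establishing that the infimum defining the $\alpha$-invariant is attained on invariant members (Theorem~\ref{mSa}). But from there the paths diverge sharply. The paper passes to the dual polytope $\square^*$, uses the simplex trick (Caratheodory) to reduce to a lattice simplex, and then applies the sharp Hensley-type diophantine bound Theorem~\ref{LHN}, itself derived from Minkowski's first theorem together with Lemma~\ref{SL} on Egyptian fractions. Your proposal replaces that entire geometry-of-numbers step with an induction on $d$ via toric adjunction to an invariant divisor $V(\rho_0)$.

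That inductive step is not merely unfinished but fails as stated, because adjunction does not preserve $\mld\ge 1/q$. Take the extremal example $\bP(1,2,3)$ with $B=0$ and $q=1$, so $\mld(X,0)=1$. Restricting to any of the three torus-invariant curves $V(\rho_0)\simeq\bP^1$ produces a toric different with a coefficient $1/2$ or $2/3$ (from a cyclic quotient singularity of index $2$ or $3$ along that curve), so the resulting $(d-1)$-dimensional pair has $\mld\le 1/2<1/q$; no choice of $\rho_0$ helps. In particular your parenthetical claim that the toric different coefficients lie in $[0,1-1/q]$ is wrong: those coefficients have the form $1-1/m$ for cone multiplicities $m$ unbounded in terms of $q$. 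More structurally, the Sylvester recursion $u_{p+1,q}=u_{p,q}(1+u_{p,q})$ enters the problem through the additive Egyptian-fraction relation $\sum_{i=1}^d\frac{1}{1+u_{i,q}}<\frac{1}{q}$ on the barycentric coordinates of the origin, not through a geometric $d\to d-1$ step on the variety, which is why the paper's recursion lives inside the elementary real-number Lemma~\ref{SL} rather than at the level of toric adjunction.
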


Theorem~\ref{MT} is sharp (see Example~\ref{mE}). An interesting feature is that the
type of coefficients of $B$ do not matter. We expect the same bound holds in the non-toric
case. This is easy to see in dimension one, but unclear in dimension two.

The $\alpha$-invariant can be localized, and defined for polarized log varieties $(X,B;H)$. 
One defines $\gamma(X,B;H)=\inf_{P\in X}n\cdot \gamma_P(X,B;|nH|)$,
where 
$
\gamma_P(X,B;|nH|)=\inf\{\lct_P(X,B;D);D\in |-nH|\}.
$
The second aim of the paper is to study the variation of $\lct_P(X,B;D)$ in the variables $P,D$,
as suggested in~\cite{Amb06}.
The variation in $D$ is implicit in the work of Viehweg~\cite[Sections 5.3,8.2]{Vie95}. He considers only the 
special case when $B=0$ and $X$ has klt singularities, and replaces $\gamma(X,0;D)$ by $e(X,0;D)$, 
the largest positive integer $e$ such that $\frac{1}{e}<\lct(X,0;D)$. We extend Viehweg's results 
to our setting. Allowing non-zero boundaries strengthens some statements, and gives new ones. 
The main new statement is 

\begin{thm}
Let $\pi\colon (X,B)\to S$ be a family of log varieties. Then the locus of points $x\in X$ where 
$(X_{\pi(x)},B_{\pi(x)})$ has log canonical singularities is open.
\end{thm}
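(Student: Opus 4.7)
The plan is to take a log resolution of the total pair, compare it with log resolutions of fibers generically over the base, and then combine Noetherian induction on $S$ with a specialization argument via inversion of adjunction.

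First, take a log resolution $\mu\colon Y\to X$ of $(X,B)$ and write $\mu^{*}(K_X+B)=K_Y+B_Y$ with $B_Y=\sum_i b_iE_i$ supported on a simple normal crossings divisor. The non-lc locus of $(X,B)$ in $X$ is the closed set $Z:=\mu\bigl(\bigcup_{b_i>1}E_i\bigr)$. By generic flatness applied to each $E_i\to S$ and generic smoothness applied to $\pi\circ\mu$, one can find a dense open $U\subseteq S$ over which $\mu$ restricts to a log resolution of each fiber $(X_s,B_s)$ with the coefficient of $(E_i)_s$ equal to $b_i$. Setting
$$W=\{x\in X\colon (X_{\pi(x)},B_{\pi(x)})\text{ is not lc at }x\},$$
we therefore have $W\cap\pi^{-1}(U)=Z\cap\pi^{-1}(U)$, which is closed in $\pi^{-1}(U)$. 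Applying Noetherian induction to the restricted family $\pi^{-1}(S\setminus U)\to S\setminus U$, the set $W$ is constructible in $X$.

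To upgrade constructibility to closedness in the Noetherian scheme $X$, it suffices to show that $W$ is stable under specialization, equivalently that its complement $T=X\setminus W$ is stable under generization. Reducing to the case where $S$ is the spectrum of a discrete valuation ring with closed point $s$ and generic point $\eta$, and a specialization $y\rightsquigarrow x$ in $X$ with $\pi(y)=\eta$, $\pi(x)=s$, the task becomes: if $(X_s,B_s)$ is lc at $x$, then $(X_\eta,B_\eta)$ is lc at $y$. By inversion of adjunction for log canonical singularities, lc-ness of $(X_s,B_s)$ at $x$ is equivalent to lc-ness of $(X,B+X_s)$ in some open neighbourhood $V\ni x$; since $B+X_s\ge B$, the pair $(X,B)$ is itself lc on $V$, and restricting to the generic fiber shows that $(X_\eta,B_\eta)$ is lc at every point of $V\cap X_\eta$. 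As $y\rightsquigarrow x$, the point $y$ lies in every open neighbourhood of $x$, so in particular $y\in V$, giving the required lc-ness at $y$.

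The main obstacle is precisely this specialization step. Over $U$ it is automatic, since the coefficients of $B_Y$ are fiberwise invariants and the set $Z$ is already closed in $X$. Off $U$, however, $\mu$ may cease to be a fiberwise log resolution, and any argument purely in terms of $\mu$ runs into jumps of log discrepancies under degeneration; extending Viehweg's klt-based approach to the lc setting therefore requires an extra ingredient, which inversion of adjunction supplies cleanly. Noetherian induction arranges that this ingredient has to be invoked only finitely many times, one stratum at a time, reducing everything back to the transparent SNC situation handled over $U$.
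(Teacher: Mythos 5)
Your overall strategy shares the two essential ingredients with the paper's proof — generic flatness to produce a dense open $U\subseteq S$ over which a log resolution of the total pair restricts fiberwise, and inversion of adjunction — but the bookkeeping around Noetherian induction is different. The paper shows, for $S$ regular, the one-sided inclusion $(X,B)_{-\infty}\subseteq W$ (where $W$ is the locus of non-lc fibers) with equality over $U$; since $(X,B)_{-\infty}$ is closed in $X$, the Noetherian induction then closes up immediately, because $W=(X,B)_{-\infty}\cup(W\cap\pi^{-1}(S\setminus U))$ is a union of two subsets closed in $X$. You instead go through constructibility plus stability under specialization, which is a legitimate route, but it pushes the difficulty into the specialization step, where two genuine gaps appear.

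First, the reduction ``to the case where $S$ is the spectrum of a discrete valuation ring'' is a base change that destroys finite type over $k$. After this base change, $X$ is no longer a variety over $k$, so the notions of canonical divisor, log resolution, and above all Kawakita's inversion of adjunction (which the paper uses for normal varieties over a field) are not available off the shelf; you would either need to extend these to DVR bases or, better, avoid the base change entirely by working with $T=\overline{\{\pi(y)\}}\subseteq S$ (which is still finite type over $k$) — at the cost of dealing with singularities of $T$ at $\pi(x)$, an issue the paper sidesteps by restricting to the regular locus of $S$ from the outset. Second, the sentence ``restricting to the generic fiber shows that $(X_\eta,B_\eta)$ is lc at every point of $V\cap X_\eta$'' is asserted but not proved. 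This implication — that lc of the total pair implies lc of the generic fiber — is one of the two non-trivial halves of the theorem, and in fact fails for non-generic fibers (a simple example is $X=\bA^2_{x,t}\to\bA^1_t$, $B=\frac{2}{3}\{x^2=t\}$: the total pair is klt, but $B_0=\frac{4}{3}[0]$ on $X_0=\bA^1$). It holds at $\eta$ precisely because the log resolution of $(X,B)$ restricts to a fiberwise log resolution at the generic point with the same discrepancy coefficients — which is the same log-smooth argument that the paper spells out to prove equality over $U$. So your specialization step ends up re-proving, over a less friendly base, both Lemma 1.8 and the log-smooth half of the paper's argument; filling in the gaps would make your proof at least as long as the paper's, and the DVR detour introduces technicalities the paper's one-sided-inclusion formulation avoids.
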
 

It follows that log canonical thresholds are lower semi-continuous in flat families. In particular,
$\lct_P(X,B;D)$ is lower semicontinuous in two variables $(P,D)\in X\times |nH|$.
By projection onto the second factor, we obtain extensions to the log category of the semi-continuity 
results of Varchenko~\cite{Var82} and Demailly-Koll\'ar~\cite{DemKol01}. By projection onto the 
first factor, we obtain that $P\mapsto \gamma_P(X,B;|nH|)$ is lower semi-continuous and takes 
only finitely many values. It would be interesting to find out if the same holds for the asymptotic 
version $P\mapsto \gamma_P(X,B;H)$. 

We outline the structure of this paper. In Section 1 we introduce families of log varieties. The main
result is the openness of the locus where the fibers have log canonical singularities (Theorem~\ref{OpenLc}).
It follows that log canonical thresholds are semi-continuous in flat families (Corollary~\ref{8.5}).

In Section 2, we study the variation of log canonical thresholds $\lct_P(X,B;D)$ in $P$ and $D$,
where $D$ moves in a linear system. The results follow from those of Section 1 applied to the 
universal divisor of the linear system. We also generalize the product theorem of Viehweg to 
distinct factors (Theorem~\ref{pt}). To an $\N$-graded convex family of linear systems $\Lambda_\bullet$ 
on a given log variety $(X,B)$, we associate the local $\alpha$-invariant at $x$ as follows:
$$
\gamma_x(X,B;\Lambda_\bullet)=\inf\{\lct_x(X,B;\frac{D_n}{n}); n\ge 1,D_n\in \Lambda_n\}
$$
It is very interesting to study the variation in $x$ of this functional (see Question~\ref{grsc}),
but we can only say little in general. We compute this invariant if $X$ is a curve, recall
some known results, and express it in terms of width. We can say more in the toric case.
In Section 3, we compute the local $\alpha$-invariant in the generic point of the invariant
primes of a toric variety, and show that their minimum is exactly the global $\alpha$-invariant 
(Theorem~\ref{mSa}). In particular, we can compute combinatorially the $\alpha$-invariant of
a line bundle on a toric variety. With some extra assumptions, this was independently obtained
in~\cite[Theorem 3.4]{Dcx14}, with analytic methods.

The main result of Section 4 is Theorem~\ref{LHN}, a logarithmic effective version of a
diophantine approximation result of Hensley~\cite[Lemma 2]{Hen83}. The special case $q=1$ 
was solved by Averkov~\cite[Theorem 2.1]{Ave12}, and our proof is inspired from his. We 
also give sharp versions of the original results of Hensley~\cite{Hen83}.

In Section 5, we give the sharp lower bound for the $\alpha$-invariant of a toric log Fano
(Theorem~\ref{GB}), a result essentially equivalent to Theorem~\ref{LHN}. In particular,
we obtain an upper bound for the anti log canonical volume (Corollary~\ref{vb}).
Theorem~\ref{FLF} extends to the log category and simplifies the proof of the 
finiteness of $d$-dimensional $\epsilon$-log canonical toric Fano
varieties~\cite{BB92}. We end Section 5 with the simplest examples of toric log Fano varieties,
where we can explicitly compute both the minimal log discrepancy and the $\alpha$-invariant,
and see their relation with diophantine approximation.


\section{Families of log varieties}



\subsection{Relative effective Cartier divisors}


Throughout this paper, we fix a base field $k$, algebraically closed and of 
characteristic zero. By {\em scheme} we mean a $k$-scheme of finite type.

Let $\pi\colon X\to S$ be a flat morphism of schemes, and $D$ an effective Cartier
divisor on $X$. Recall that $D$ is called {\em relative over $S$} if one of the following
equivalent conditions holds:
\begin{itemize}
\item[a)] $D$ is flat over $S$.
\item[b)] for every point $x\in X$ and a local equation $f$ for $D$ at $x$, $f$ does not divide
zero in $\cO_{X_{\pi(x)},x}$.
\item[c)] for every $s\in S$, $\Supp D$ contains no associated prime of $X_s$.
\end{itemize}
If the fibers of $\pi$ are reduced, c) means that $\Supp D$ contains no irreducible
component of a fiber of $\pi$.
If $S'\to S$ is a morphism of schemes, the pullback of $D$ is a well defined effective Cartier
divisor on $X\times_S S'$, relative over $S'$. 

Let $\pi\colon X\to S$ be a flat morphism of schemes. An {\em effective $\Q$-Cartier
divisor on $X$ relative over $S$} is a formal product $xD$, where $x\ge 0$ is a rational
number and $D$ is an effective Cartier divisor on $X$ relative over $S$. If $rx\in \Z$,
we can write it as $\frac{1}{r}D'$, where $D'=rxD$ is an effective Cartier divisor on $X$ 
relative over $S$.

We will use the following special case of~\cite[Proposition 3.5]{HK04}:

\begin{lem}\label{2ext}
Let $f\colon X\to S$ be a flat morphism of schemes, whose fibers satisfy 
Serre's property $(S_2)$. Let $w\colon U\subseteq X$ be an open subset such that 
its complement $Z$ satisfies $\codim(Z_s,X_s)\ge 2$ for every $s\in S(k)$. 
Let $\cL$ be an invertible $\cO_X$-module. 
Then $\cH^i_Z(\cL)=0 \ (i=0,1)$, that is $\cL\isoto w_*(\cL|_U)$.
\end{lem}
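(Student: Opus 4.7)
The plan is to translate the vanishing $\cH^i_Z(\cL)=0$ ($i=0,1$) into a depth estimate at every point of $Z$, and then to control depth using flatness of $\pi$ together with the $(S_2)$ hypothesis on the fibers. The isomorphism $\cL\isoto w_*(\cL|_U)$ will then come for free from the standard exact sequence $0\to \cH^0_Z(\cL)\to \cL\to w_*(\cL|_U)\to \cH^1_Z(\cL)\to 0$.

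First I would invoke the standard local cohomology criterion (SGA 2, Expos\'e III): for a coherent sheaf $\cF$, the vanishing $\cH^i_Z(\cF)=0$ for $i=0,1$ is equivalent to $\depth_{\cO_{X,x}}(\cF_x)\ge 2$ at every $x\in Z$. Since $\cL$ is invertible, $\cL_x\simeq \cO_{X,x}$ as $\cO_{X,x}$-modules, so $\depth \cL_x=\depth \cO_{X,x}$. It thus suffices to prove $\depth \cO_{X,x}\ge 2$ for every $x\in Z$.

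Fix such an $x$ and set $s=\pi(x)$. The local homomorphism $\cO_{S,s}\to \cO_{X,x}$ is flat, so the additivity of depth under flat base change yields
$$
\depth \cO_{X,x}=\depth \cO_{S,s}+\depth \cO_{X_s,x}\ge \depth \cO_{X_s,x},
$$
reducing matters to $\depth \cO_{X_s,x}\ge 2$. Since $X_s$ is $(S_2)$, this in turn reduces to $\dim \cO_{X_s,x}\ge 2$. Letting $\eta$ be the generic point of any irreducible component of $Z_s$ passing through $x$, the ring $\cO_{X_s,\eta}$ is a localization of $\cO_{X_s,x}$, so $\dim \cO_{X_s,x}\ge \dim \cO_{X_s,\eta}$; the right-hand side is the codimension in $X_s$ of that component of $Z_s$, and so the codimension hypothesis delivers what is needed, provided it applies at $s$.

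The delicate point, and the main obstacle, is that $\codim(Z_s,X_s)\ge 2$ is assumed only for $s\in S(k)$, whereas I need it at every schematic point of $S$. To bridge this gap I would use that $S$ is Jacobson, being of finite type over $k$, and that the locus $\{s\in S:\codim(Z_s,X_s)\ge 2\}$ is constructible, being cut out by a fiber-dimension condition for the finite-type morphism $\pi|_Z\colon Z\to S$; a constructible subset of a Jacobson scheme which contains every closed point must coincide with the whole scheme. Combining this with the preceding two paragraphs gives $\depth \cO_{X,x}\ge 2$ at every $x\in Z$, hence the required vanishing and isomorphism.
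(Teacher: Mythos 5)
The paper gives no proof of this lemma; it simply records it as a special case of \cite[Proposition 3.5]{HK04}, so there is no in-text argument to compare yours against. Your proof is direct and its backbone is correct: the local cohomology criterion reduces the vanishing to $\depth\cO_{X,z}\ge 2$ for all $z\in Z$; the additivity $\depth\cO_{X,z}=\depth\cO_{S,s}+\depth\cO_{X_s,z}$ for the flat local map $\cO_{S,s}\to\cO_{X,z}$ reduces this to the fiber; $(S_2)$ turns it into the dimension bound $\dim\cO_{X_s,z}\ge 2$; and the codimension hypothesis supplies this bound. The isomorphism $\cL\isoto w_*(\cL|_U)$ then indeed follows from the four-term exact sequence.

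The one imprecise step is the constructibility of $\{s\in S:\codim(Z_s,X_s)\ge 2\}$. As you state it, this is not ``a fiber-dimension condition for $\pi|_Z$'' alone, since $\codim(Z_s,X_s)$ also involves $\dim X_s$ and therefore depends on two morphisms, not one. To repair it, use the flatness of $\pi$ once more: for a flat morphism of finite type the function $x\mapsto\dim_x X_{\pi(x)}$ is locally constant on $X$, so on a connected component of $X$ every fiber $X_s$ is equidimensional of a fixed dimension $n$, and $\codim(Z_s,X_s)\ge 2$ becomes the genuine fiber-dimension condition $\dim Z_s\le n-2$ for $\pi|_Z$. The locus $\{z\in Z:\dim_z Z_{\pi(z)}\ge n-1\}$ is then closed in $Z$ by Chevalley's semicontinuity, its image in $S$ is constructible, and its complement --- your set --- is constructible, contains all closed points by hypothesis, and thus equals $S$ by the Jacobson property. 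With this made explicit your argument is complete.
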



\subsection{Log varieties, log canonical thresholds}


Let $(X/k,B)$ be a log variety. Log discrepancies in geometric valuations of $X$, and minimal 
log discrepancies in scheme points, or closed subsets of $X$, are well defined (see for example
~\cite{Amb06,Amb11}). Denote by 
$(X,B)_{lc}$ the largest open locus in $X$ where $(X,B)$ has log canonical singularities, and 
$(X,B)_{-\infty}$ its complement. For an effective $\R$-divisor $D$ on $X$ and a scheme point 
$x\in (X,B)_{lc}$, the {\em log canonical threshold at $x$ of $D$ with respect to $(X,B)$} is defined 
as
$$
\lct_x(X,B;D)=\sup\{t\ge 0;\mld_x(X,B+tD)\ge 0\}.
$$
It is $+\infty$ if $x\notin \Supp D$, $0$ if  $x\in \Supp D$ and $\mld_x(X,B)=0$, and a positive
real number if $x\in \Supp D$ and $\mld_x(X,B)>0$. It is rational if so are $B,D$ near $x$.
The reciprocal 
$$
\mu_x(X,B;D)=1/\lct_x(X,B;D)
$$ 
is called the {\em Arnold multiplicity} at $x$ of $D$ with respect to $(X,B)$. It is $0$ if 
$x\notin \Supp D$, $+\infty$ if  $x\in \Supp D$ and $\mld_x(X,B)=0$, and a positive real 
number if  $x\in \Supp D$ and $\mld_x(X,B)>0$.


\subsection{Families of log varieties}

 A {\em family of log varieties} $(X/S,B)$ consists of the following data:
\begin{itemize}
\item[a)] a flat morphism of schemes $\pi\colon X\to S$, such that $X_s$ is normal for every $s\in S$;
\item[b)] an effective $\Q$-Cartier divisor $B$ defined on $U_\pi$ and relative 
over $S$, where $w\colon U_\pi \subseteq X$ is the open locus where  the morphism $\pi$ is smooth, 
\end{itemize}
satisfying the following property: there exists an integer $r\ge 1$ such that $rB$ is Cartier and the $\cO_X$-module
$w_*((\Omega^{top}_{U/S})^{\otimes r}(rB))$ is locally free. The smallest
$r$ with this property is called the {\em index} of the family.

Here $\Omega^{top}_{U/S}$ is the top exterior product of $\Omega^1_{X/S}$, corresponding to 
the locally constant dimension of the fibers.
Property a) implies that $Z=X\setminus U$ is the union of the singular locus
of $X_s$, after all $s\in S(k)$. In particular, $\codim(Z_s,X_s)\ge 2$ for every $s\in S(k)$.
Recall that normality of a scheme is defined locally, so it does not imply irreducibility.
The fibers of $\pi$ are normal if and only if $X_s$ is normal for every $s\in S(k)$. Even if
the latter are irreducible, some fibers over non-closed points may be reducible.

Consider a family of log varieties of index $r$. For $l\in r\Z$, $lB$ is a Cartier divisor on $U$, and
we can define
$$
\omega^{[l]}=w_*((\Omega^{top}_{U/S})^{\otimes l}\otimes\cO_U(lB)).
$$ 
The $\cO_X$-module $\omega^{[l]}$ is coherent  (EGA IV, Proposition 5.11.1). By assumption, $\omega^{[r]}$ 
is an invertible $\cO_X$-module.

\begin{lem} $(\omega^{[r]})^{\otimes \frac{l}{r}}\isoto \omega^{[l]}$ for every $l\in r\Z$. 
\end{lem}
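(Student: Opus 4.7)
The plan is to construct the isomorphism first on the smooth locus $U=U_\pi$, where every sheaf involved is an honest line bundle, and then to extend it across the codimension $\ge 2$ complement by means of Lemma~\ref{2ext}.

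Write $l=rm$ with $m\in\Z$. On $U$ the sheaves $\Omega^{top}_{U/S}$ and $\cO_U(rB)$ are invertible, and by definition $\omega^{[r]}|_U=(\Omega^{top}_{U/S})^{\otimes r}(rB)$. Tensor exponents of line bundles add in the usual way, so there is a canonical isomorphism
$$
\alpha\colon (\omega^{[r]})^{\otimes m}|_U=\bigl((\Omega^{top}_{U/S})^{\otimes r}(rB)\bigr)^{\otimes m}\isoto (\Omega^{top}_{U/S})^{\otimes l}(lB).
$$
(If $m<0$, interpret the left-hand side as the $|m|$-th tensor power of the dual; the construction of $\alpha$ is the same.)

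Now apply $w_*$. The right-hand side gives $w_*((\Omega^{top}_{U/S})^{\otimes l}(lB))=\omega^{[l]}$ by definition. For the left-hand side, note that $\omega^{[r]}$ is invertible on $X$ by the hypothesis of index $r$, and hence $(\omega^{[r]})^{\otimes m}$ is also an invertible $\cO_X$-module. The fibers $X_s$ are normal, so in particular satisfy Serre's $(S_2)$, and we already observed that $Z=X\setminus U$ has $\codim(Z_s,X_s)\ge 2$ for every $s\in S(k)$. Lemma~\ref{2ext} therefore applies to the invertible sheaf $(\omega^{[r]})^{\otimes m}$ and yields
$$
(\omega^{[r]})^{\otimes m}\isoto w_*\bigl((\omega^{[r]})^{\otimes m}|_U\bigr).
$$
Composing with $w_*\alpha$ gives the desired isomorphism $(\omega^{[r]})^{\otimes m}\isoto \omega^{[l]}$.

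There is no genuine obstacle: the proof is a direct bookkeeping exercise once Lemma~\ref{2ext} is available. The one point that requires mild care is verifying that the hypotheses of Lemma~\ref{2ext} hold in our setting --- namely, that normality of the fibers supplies $(S_2)$ and that $(\omega^{[r]})^{\otimes m}$ is itself invertible --- so that pushing forward from $U$ really recovers $(\omega^{[r]})^{\otimes m}$ and not just its reflexive hull.
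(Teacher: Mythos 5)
Your proof is correct and uses the same key mechanism as the paper: both sheaves restrict to the same line bundle on $U$, and Lemma~\ref{2ext} (using normality of the fibers for $(S_2)$, and $\codim(Z_s,X_s)\ge 2$) shows that $(\omega^{[r]})^{\otimes m}$, being invertible, equals the pushforward of its restriction to $U$, which is $\omega^{[l]}$ by definition. The paper phrases this slightly differently --- it introduces the natural multiplication maps $\omega^{[l]}\otimes\omega^{[l']}\to\omega^{[l+l']}$ and shows the single-step map $\omega^{[r]}\otimes\omega^{[l]}\to\omega^{[r+l]}$ is an isomorphism by the same local-cohomology vanishing --- but this is a cosmetic difference; the two arguments rest on the same observation that $\omega^{[r]}$ is invertible and that $w_*$ from $U$ is fully faithful on sheaves with $\cH^0_Z=\cH^1_Z=0$.
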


\begin{proof} We have natural multiplication maps
$
\omega^{[l]}\otimes \omega^{[l']}\to \omega^{[l+l']}.
$
By Lemma~\ref{2ext}, $\omega^{[0]}=\cO_X$. Therefore suffices to show that
$\omega^{[r]}\otimes \omega^{[l]}\to \omega^{[r+l]}$ is an isomorphism. Indeed,
denote $\cF=\omega^{[r]}\otimes \omega^{[l]}$. Our homomorphism factors as 
$$
\cF\to w_*(\cF|_U)\isoto \omega^{[r+l]}.
$$
We have $\cH^i_Z(\cF)=0\ (i=0,1)$, since $\omega^{[l]}$ satisfies this property by
Lemma~\ref{2ext}, and $\omega^{[r]}$ is locally trivial. Therefore the first map is also an isomorphism.
\end{proof}

\begin{lem}
Let $(X/S,B)$ be a family of log varieties of index $r$. Let $g\colon S'\to S$
be a morphism of schemes. Consider the induced base change diagram
\[ \xymatrix{
X \ar[d]_\pi & X'\ar[d]^{\pi'}  \ar[l]_G \\
S           & S' \ar[l]^g
} \]
Then $U_{\pi'}=G^{-1}(U_\pi)$, $(X'/S',(G|_{U_{\pi'}})^*B)$ is a family of log varieties over $S'$, and
we have natural isomorphisms
$$
G^*(\omega^{[l]} )\isoto {\omega'}^{[l]} \ (l\in r\Z).
$$
\end{lem}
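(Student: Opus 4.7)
The plan is to establish the three claims in sequence, extracting (2) as a consequence of the sheaf-theoretic isomorphism in (3), and using Lemma~\ref{2ext} to bypass any careful analysis of a base-change adjunction morphism.

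First, I would verify (1). Since $\pi$ is flat, smoothness is detected fiberwise: $x\in U_\pi$ iff $X_{\pi(x)}$ is smooth at $x$. Base change preserves flatness, so the same holds for $\pi'$. Smoothness of a finite-type fiber at a point is preserved under arbitrary field extension, and $X'_{\pi'(x')}\cong X_{g(\pi'(x'))}\otimes_{k(g(\pi'(x')))}k(\pi'(x'))$, so $x'\in U_{\pi'}$ iff $G(x')\in U_\pi$, giving $U_{\pi'}=G^{-1}(U_\pi)$.

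Next I would set up (2), deferring the index condition. Flatness of $\pi'$ is stable under base change. For normality of fibers, in characteristic zero a normal finite-type scheme over a field is geometrically normal, so every fiber $X'_{s'}$, obtained from $X_{g(s')}$ by extension of residue field, is again normal. The pullback $B':=g_U^*B$, with $g_U:=G|_{U_{\pi'}}\colon U_{\pi'}\to U_\pi$, is a well-defined effective $\Q$-Cartier divisor on $U_{\pi'}$ relative over $S'$ by the functoriality of pullback for relative Cartier divisors recalled in~\S1-A.

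For (3), the relative cotangent sheaf commutes with base change on the smooth locus, so $g_U^*\Omega^1_{U_\pi/S}\cong \Omega^1_{U_{\pi'}/S'}$; taking top exterior powers and using $g_U^*\cO_{U_\pi}(lB)\cong \cO_{U_{\pi'}}(lB')$ gives a canonical isomorphism on $U_{\pi'}$. Composing with the adjunction $G^*w_*\to w'_*g_U^*$ yields a natural map
$$
G^*\omega^{[l]}\longrightarrow w'_*\bigl((\Omega^{top}_{U_{\pi'}/S'})^{\otimes l}\otimes \cO_{U_{\pi'}}(lB')\bigr).
$$
I would first treat $l=r$. The codimension hypothesis transfers to $\pi'$: by (1), $Z':=X'\setminus U_{\pi'}=G^{-1}(Z)$, and for $s'\in S'(k)$ one has $g(s')\in S(k)$ with $X'_{s'}\cong X_{g(s')}$ and $Z'_{s'}\cong Z_{g(s')}$, hence $\codim(Z'_{s'},X'_{s'})\ge 2$. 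Since $\omega^{[r]}$ is invertible, so is $G^*\omega^{[r]}$, and its restriction to $U_{\pi'}$ equals $(\Omega^{top}_{U_{\pi'}/S'})^{\otimes r}(rB')$ by (1). Applying Lemma~\ref{2ext} on $X'$ to $G^*\omega^{[r]}$ identifies it with the pushforward above. This simultaneously proves that $(X'/S',B')$ is a family of log varieties of index dividing $r$ (completing (2)) and gives the isomorphism $G^*\omega^{[r]}\isoto {\omega'}^{[r]}$. For general $l\in r\Z$, the previous lemma applied to both $(X/S,B)$ and $(X'/S',B')$ reduces the claim to the case $l=r$ by tensoring.

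The only subtle step is the $l=r$ isomorphism: rather than fighting with a base-change comparison for $w_*$, the trick is to observe that $G^*\omega^{[r]}$ is already invertible, so Hartog's extension (Lemma~\ref{2ext}) recovers it from its restriction to $U_{\pi'}$, which is manifestly the correct sheaf. Normality of fibers under arbitrary base change is the only other point requiring the standing characteristic zero hypothesis.
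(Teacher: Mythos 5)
Your proof is correct and follows essentially the same route as the paper: both arguments hinge on the commutation of $\Omega^1_{-/-}$ with base change on the smooth locus, the natural adjunction map $G^*w_* \to w'_* g_U^*$, and Lemma~\ref{2ext} (applied to the locally free sheaf $G^*\omega^{[l]}$, which gives the $(S_0),(S_1)$ vanishing that lets the pushforward from $U'$ recover it). The only organizational difference is that you first prove the $l=r$ case (where invertibility is given by hypothesis) and then deduce general $l\in r\Z$ by tensoring via the preceding lemma $\omega^{[l]}\cong(\omega^{[r]})^{\otimes l/r}$ on both $X$ and $X'$, whereas the paper treats all $l$ uniformly by invoking local triviality of $\omega^{[l]}$ (itself a consequence of that same preceding lemma) directly; these are the same ingredients in a different order. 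You are also somewhat more explicit than the paper about the stability of the hypotheses under base change — geometric normality of the fibers in characteristic zero, $Z'=G^{-1}(Z)$, and the fiberwise codimension $\ge 2$ condition — and about the fact that the $l=r$ isomorphism simultaneously delivers the index condition for $(X'/S',B')$, all of which the paper leaves implicit.
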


\begin{proof} We have $G^*(\Omega^1_{X/S})\isoto \Omega^1_{X'/S'}$. Therefore 
$U_{\pi'}=G^{-1}(U_\pi)$ and we have a base change diagram
\[ \xymatrix{
U \ar[d]_w & U'\ar[d]^{w'}  \ar[l]_{G|_U} \\
X           & X' \ar[l]^G
} \]
For $l\in r\Z$, we obtain a natural homomorphism
$$
G^*(\omega^{[l]})=G^*w_*((\Omega^{top}_{U/S})^{\otimes l}(lB))\to
w'_*((G|_U)^*(\Omega^{top}_{U/S})^{\otimes l}(lB)) )
\isoto w'_*(\Omega^{top}_{U'/S'})^{\otimes l}(lB')))={\omega'}^{[l]}.
$$
If we denote this homomorphism by $\cF' \to \cG'$, it factors as 
$\cF' \to w'_*(\cF'|_{U'})\isoto \cG'$. Since $\omega^{[l]}$ is locally trivial,
so is $\cF'$. By Lemma~\ref{2ext}, $\cH^i_{Z'}(\cF')=0\ (i=0,1)$. 
Therefore $\cF' \to w'_*(\cF'|_{U'})$ is also an isomorphism.
\end{proof}

In particular, for every $s\in S(k)$, the fiber $X_s$ is a normal variety,
$rB_s$ is an effective Cartier divisor on $U_s=X_s\setminus \Sing(X_s)$, and if 
$r\bar{B}_s$ is the effective Weil divisor which is the closure of $rB_s$ in $X_s$, we have
a base change diagram
 \[ \xymatrix{
U_s \ar[d]_{w_s} \ar@{^{(}->}[rr] & & U \ar[d]^w \\
X_s  \ar@{^{(}->}[rr] & & X 
} \]
and an isomorphism
$$
\omega^{[r]}|_{X_s}\isoto {w_s}_*((\Omega^{top}_{U_s/k})^{\otimes r}(rB_s))=\cO_{X_s}(rK_{X_s}+r\bar{B}_s).
$$ 
Therefore $rK_{X_s}+r\bar{B}_s$ is Cartier, so that $(X_s,\bar{B}_s)$ is a log variety.
We think of $(X_s,\bar{B}_s) \ (s\in S(k))$ as an algebraic family of log varieties parametrized
by $S$. The boundary coefficients may vary.
To simplify notation, we denote $\bar{B}_s$ by $B_s$.

\begin{exmps}
\begin{itemize}
\item[1)] Let $\pi\colon X\to S$ be a smooth morphism of schemes. Then 
$(X/S,B)$ is a family of log varieties over $S$ if and only if $B=\frac{1}{r}D$,
for some integer $r\ge 1$ and an effective Cartier divisor $D$ on $X$ which 
is relative over $S$. The fibers are the log varieties $(X_s,\frac{1}{r}D|_{X_s})\ (s\in S(k))$.
\item[2)] Let $(F,B_F)$ be a log variety over $k$. Let $S$ be  a scheme. 
Denote by $B_F\times S$ the $\Q$-Cartier divisor $p_1^*(B_{F^0})$, 
defined on $F^0\times S$, where $F^0$ is the smooth locus of $F$. 
Then $(F\times S/S, B_F\times S)$ is the trivial family of log varieties over $S$,
with constant fiber $(F,B_F)$.
\item[3)] Let $(X/S,B)$ be a family of log varieties. Let $D$ be an effective Cartier
divisor on $X$, relative over $S$. Then $(X/S,B+t D)$ is a family of log divisors for
every rational $t\ge 0$. 
\item[4)] A family of log curves consists of a smooth morphism $\pi\colon X\to S$ of relative
dimension one, endowed with an effective $\Q$-Cartier divisor $B$, relative over $S$. If $r$ is the
index of the family, then $rB=D$ is an effective Cartier divisor on $X$ which is finite flat
over $S$. The $\cO_S$-module $\pi_*\cO_D$ is locally free.
\end{itemize}
\end{exmps}

\begin{lem}\label{famdes}
Let $\pi\colon X\to S$ be a flat morphism of schemes with normal fibers, and 
$S$ smooth over $k$. Then $(X/S,B)$ is a family of log varieties
if and only if $(X/k,B)$ is a log variety and the boundary supports no
irreducible components of fibers of $\pi$. Moreover, they have the same index $r$, 
and 
$
\pi^*\omega_{S/k}^{\otimes r}\otimes \omega^{[r]}_{(X/S,B)}\simeq \omega^{[r]}_{(X/k,B)}.
$
\end{lem}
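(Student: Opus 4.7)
The plan is to reduce both directions of the equivalence, the equality of indices, and the stated isomorphism to a single identity on the smooth locus $U=U_\pi$. Since $\pi$ is smooth on $U$ and $S$ is smooth over $k$, the open set $U$ is itself smooth over $k$. Because the fibers $X_s$ are normal, their singular loci have codimension $\ge 2$, and by flat additivity of dimensions the complement $Z=X\setminus U$ has codimension $\ge 2$ in $X$. The same ingredients show that $X$ is normal: $(R_1)$ is exactly the codimension-two estimate, while $(S_2)$ follows from $S$ being Cohen--Macaulay and the fibers being $(S_2)$, by the flat local criterion.

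The key ingredient is the relative cotangent sequence on $U$,
$$
0 \to \pi^*\Omega^1_{S/k} \to \Omega^1_{X/k} \to \Omega^1_{X/S} \to 0,
$$
which is short exact with locally free terms. Taking top exterior powers, then $r$-th tensor powers, yields
$$
(\Omega^{top}_{X/k})^{\otimes r}|_U \isoto (\Omega^{top}_{U/S})^{\otimes r}\otimes \pi^*\omega_{S/k}^{\otimes r}|_U.
$$
Twisting by $\cO_U(rB)$ and applying $w_*$, using the projection formula for the locally free factor $\pi^*\omega_{S/k}^{\otimes r}$, I obtain
$$
\omega^{[r]}_{(X/k,B)} \simeq \omega^{[r]}_{(X/S,B)} \otimes \pi^*\omega_{S/k}^{\otimes r},
$$
which is precisely the stated isomorphism. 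Here I use that the reflexive sheaf $\omega^{[r]}_{(X/k,B)}$ may equally well be computed as $w_*$ from $U$, even when the smooth locus of $X/k$ is strictly larger, because $Z$ has codimension $\ge 2$ in the normal scheme $X$.

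The equivalence and the equality of indices now follow at once: $\pi^*\omega_{S/k}^{\otimes r}$ is invertible, so for a given $r$ the sheaf $\omega^{[r]}_{(X/S,B)}$ is invertible if and only if $\omega^{[r]}_{(X/k,B)}$ is. For the condition on the boundary, note that the fibers $X_s$ are normal and hence reduced, so their associated primes coincide with their irreducible components; therefore condition c) in the definition of a relative Cartier divisor is exactly the requirement that $\Supp B$ contain no irreducible component of any fiber. In the forward direction, the boundary defined on $U$ extends uniquely by closure to a Weil $\Q$-divisor on the normal scheme $X$; in the reverse direction, the restriction of $B$ to $U$ is automatically $\Q$-Cartier because $U$ is regular.

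The main obstacle is technical rather than conceptual: one must ensure that the projection formula applies correctly in each direction, since $\omega^{[r]}_{(X/S,B)}$ is only a priori coherent on $X$. This is handled by performing the computation on $U$ with genuinely locally free sheaves and then applying $w_*$, for which Lemma~\ref{2ext} together with the preceding lemmas of the section guarantee that reflexive sheaves of this form are recovered from their restrictions to $U$.
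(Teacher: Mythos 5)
Your proof is correct and follows essentially the same route as the paper's: both derive the key isomorphism from the relative cotangent sequence on $U=U_\pi$ (where $U$ is smooth over $k$ since $\pi|_U$ and $S/k$ are smooth), push forward via $w_*$ using the codimension-$\ge 2$ bound on $X\setminus U$, and then read off the equivalence and equality of indices from invertibility of $\pi^*\omega_{S/k}^{\otimes r}$. Your version is somewhat more explicit about the projection formula step and the translation between condition c) for relative Cartier divisors and the statement on fiber components, but the structure and ingredients are the same.
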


\begin{proof} Since $S$ and the fibers are normal, so is $X$.
Let $U=U_\pi$ be the smooth locus of $\pi$. It follows that $\codim(X\setminus U,X)\ge 2$.
In particular, $lB$ is Cartier on $U$ if and only if $l\bar{B}$ is a Weil divisor on $X$. 
We have a short exact sequence
$$
0\to \pi^*\Omega^1_{S/k}\to \Omega^1_{U/k}\to \Omega^1_{U/S}\to 0.
$$
It induces an isomorphism $\pi^*\Omega^{top}_{S/k}\otimes \Omega^{top}_{U/S}\simeq \Omega^{top}_{U/k}$.
If $lB$ is Cartier on $U$, we obtain an isomorphism 
$$
\pi^*\omega_{S/k}^{\otimes l}\otimes \omega^{[l]}_{(X/S,B)}\simeq \omega^{[l]}_{(X/k,B)}.
$$
Therefore $\omega^{[l]}_{(X/S,B)}$ is locally free if and only if $\omega^{[l]}_{(X/k,B)}$ is locally free,
and the first claim follows. The second follows from the first.
\end{proof}

\begin{lem}
Let $(X,B)$ be a log variety. Let $f\colon X\to S$ be a morphism, with $S$ a reduced scheme.
Then there exists an open subset $\emptyset \ne V\subseteq S$ such that $(X,B)|_{f^{-1}V}\to V$
is a family of log varieties.
\end{lem}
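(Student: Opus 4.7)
The plan is to shrink $S$ in stages until the hypotheses of Lemma~\ref{famdes} are in force, then invoke that lemma. First, replacing $S$ by an irreducible component of $\overline{f(X)}$, I may assume $S$ is integral and $f$ is dominant. By generic smoothness in characteristic zero I restrict $S$ to its smooth locus, and by generic flatness applied to $f$ I shrink it further so that $f$ becomes flat over the resulting open subset of $S$.

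Next I arrange for the fibers to be normal. The generic fiber $X_\eta$ is a localization of the normal scheme $X$ at the multiplicative set of nonzero elements coming from the base, hence is normal; since the base field has characteristic zero, $k(\eta)$ is perfect, so $X_\eta$ is in fact geometrically normal. By the openness of the geometrically-normal-fibers locus for a flat morphism of finite type (EGA IV, 12.2.4, which applies because finite-type $k$-schemes are excellent), after a further shrinking of $S$ I may assume every fiber of $f$ is normal.

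It remains to arrange the relative condition on the boundary. Write $B=\sum_i b_i B_i$ with $B_i$ prime. For each $i$, the image $f(B_i)$ is constructible: if its closure is not all of $S$, I subtract that closure from the current open subset, discarding $B_i$ from the restricted family. For each remaining (dominant) component, generic flatness applied to $B_i\to S$ permits one more shrinking of $S$ so that $B_i$ becomes flat, and hence a relative effective Cartier divisor on the smooth locus $U_\pi$ of the restricted morphism, over the resulting open $V\subseteq S$.

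With these reductions $V$ is smooth over $k$, $f^{-1}V\to V$ is flat with normal fibers, $(X/k,B)$ is a log variety by hypothesis, and no component of the boundary contains an irreducible component of any fiber. Lemma~\ref{famdes} then gives that $(X,B)|_{f^{-1}V}\to V$ is a family of log varieties. The principal technical input is the openness of the geometrically-normal-fibers locus; once this is secured, the log structure transfers to the family automatically via Lemma~\ref{famdes}, and the only bookkeeping is ensuring that non-dominant boundary components are safely removed from the base.
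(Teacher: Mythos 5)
Your proof is correct and follows essentially the same route as the paper: shrink $S$ so that the hypotheses of Lemma~\ref{famdes} hold (smooth base, flat with normal fibers, boundary misses fiber components), then invoke that lemma. You unpack the paper's terse ``shrink $S$ so that $f$ is flat with normal fibers'' via EGA~IV 12.2.4 and handle the boundary component-by-component where the paper applies generic flatness once to the Cartier divisor $rB|_U$; both amount to the same thing.

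One small imprecision worth tightening: ``replacing $S$ by an irreducible component of $\overline{f(X)}$'' is not a shrinking to an open subset of $S$, which is what the statement demands. If $f$ dominates some component $S_0$ of $S$, you should instead shrink $S$ to the open subset $S_0\setminus\bigcup_{i\neq 0}S_i$, which is integral; if $f$ dominates no component, take $V=S\setminus\overline{f(X)}$, where the family is empty and the claim is vacuous. After this fix the rest of your argument goes through as written.
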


\begin{proof} We may shrink $S$ to an open subset, so that $S$ is smooth, and $f$ is flat with
normal fibers. Let $U\subseteq X$ be the smooth locus of $f$.
Let $r\ge 1$ such that $rK_X+rB$ is Cartier. We may further shrink $S$ so that
the effective Cartier divisor $rB|_U$ becomes flat over $S$.
By Lemma~\ref{famdes}, $(X/S,B|_U)$ is a family of log varieties.
\end{proof}

\begin{lem}\label{ian}
Let $\pi\colon (X,B)\to S$ be a family of log varieties. Let $S$ be regular at 
$s$, and choose a regular system of parameters $(h_i)_i$ for $\cO_{S,s}$. The following are 
equivalent for $x\in X_s$:
\begin{itemize}
\item[a)] the fiber $(X_s,B_s)$ has log canonical singularities near $x$;
\item[b)] the log variety $(X,B+\pi^*\Sigma_s)$ has log canonical singularities near $x$,
where $\Sigma_s=\sum_i \dv(h_i)$.
\end{itemize}
\end{lem}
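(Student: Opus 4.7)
The plan is to induct on $d:=\dim_s S$, the length of the regular system of parameters $(h_i)_i$. After shrinking $X$ and $S$ to affine neighborhoods of $x$ and $s$, we may assume the $h_i$ are defined on all of $S$ and that $S$ is irreducible. The base case $d=0$ is trivial: then $S=\{s\}$ locally, so $X=X_s$, $B=B_s$ and $\Sigma_s=0$, and both (a) and (b) reduce to the same condition.

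For the inductive step, set $h:=h_1$ and $S':=\dv(h)\subset S$, which is regular at $s$ of dimension $d-1$ with regular system of parameters $(h_2|_{S'},\ldots,h_d|_{S'})$. The preimage $D:=\pi^{-1}(S')=X\times_S S'$ is an effective Cartier divisor on $X$ (since $h\circ\pi$ is a non-zero divisor on $\cO_X$ by flatness) and is normal (flat over the regular base $S'$ with normal fibers). By the base change lemma together with Lemma~\ref{famdes}, the pair $(D/S',B|_D)$ is again a family of log varieties, of the same index $r$. The crux of the argument is to identify the Shokurov different $\operatorname{Diff}_D(B)$ with the restriction $B|_D$. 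Combining Lemma~\ref{famdes} applied to both $(X/S,B)$ and $(D/S',B|_D)$, the base change isomorphism $\omega^{[r]}_{(X/S,B)}|_D\simeq\omega^{[r]}_{(D/S',B|_D)}$, and the usual adjunction $\omega_S\otimes\cO_S(S')|_{S'}\simeq\omega_{S'}$ on the smooth base, one deduces
$$
\omega^{[r]}_{(X/k,B)}|_D\otimes\cO_X(rD)|_D\simeq\omega^{[r]}_{(D/k,B|_D)},
$$
which at the level of divisors reads $(K_X+B+D)|_D\sim_\Q K_D+B|_D$. Since both $\operatorname{Diff}_D(B)$ and $B|_D$ restrict to the same Weil divisor on $U_{\pi'}\subseteq D$, whose complement has codimension $\geq 2$, they agree as Weil divisors.

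With the identification $\operatorname{Diff}_D(B)=B|_D$ in hand, inversion of adjunction for lc pairs applied to the normal Cartier divisor $D\subset X$, combined with the additivity of the different in a boundary that does not contain $D$, yields
$$
(X,B+\pi^*\Sigma_s)\text{ is lc near }x \iff (D,B|_D+\pi'^*\Sigma_{s,D})\text{ is lc near }x,
$$
where $\pi':=\pi|_D\colon D\to S'$ and $\Sigma_{s,D}:=\sum_{i\geq 2}\dv(h_i|_{S'})$. The right hand side is exactly the condition (b) for the family $(D/S',B|_D)$ with the regular system of parameters $(h_i|_{S'})_{i\geq 2}$ at $s\in S'$; by the inductive hypothesis, it is equivalent to $(D_s,(B|_D)_s)=(X_s,B_s)$ being lc near $x$, which completes the induction. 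The main obstacle is the identification $\operatorname{Diff}_D(B)=B|_D$: it fails without the family structure, and the argument above is precisely where Lemma~\ref{famdes} (together with the base change of $\omega^{[r]}$) is used in an essential way. Once the different has been computed, the application of inversion of adjunction is routine.
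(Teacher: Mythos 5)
Your proof is correct and follows essentially the same route as the paper: cut by one regular parameter $h_i$ at a time, observe that $X'=\pi^{-1}(\dv(h_i))$ is a normal Cartier divisor in $X$, identify the different along $X'$ with $B|_{X'}$, apply Kawakita's inversion of adjunction, and iterate. The only difference is presentational: the paper simply invokes the standard fact that the different of a normal Cartier divisor is zero, whereas you derive the identification $\operatorname{Diff}_D(B)=B|_D$ more explicitly from Lemma~\ref{famdes} and the base change isomorphism for $\omega^{[r]}$, which makes the role of the family structure more transparent.
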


\begin{proof}
Let $S'=\dv(h_j)$. It is defined locally near $s$, but we may shrink $S$ to a neighborhood
of $s$. Let $X'=\pi^*(S')$, $\pi'\colon X'\to S'$ the induced morphism, and $B'=B|_{U_{\pi'}}$.
The base change data $(X',B')\to S'\ni s,(h_i|_{S'})_{i\ne j}$ satisfy the same properties.
Now $(X,B+X')$ is a log variety with lc center $X'$, a normal Cartier divisor in $X$. Therefore
the different is zero, and the codimension one adjunction formula is
$$
(K_X+B+X')|_{X'}=K_{X'}+B'.
$$
If we denote $\Sigma'_s=\sum_{i\ne j}\dv(h_i)$, we obtain 
$
(K_X+B+\pi^*\Sigma_s)|_{X'}=K_{X'}+B'+{\pi'}^*\Sigma'_s.
$
By Inversion of Adjunction~\cite{Kwk08}, $(X,B+\pi^*\Sigma_s)$ has log canonical
singularities near $x$ if and only if  $(X',B'+{\pi'}^*{\Sigma'_s})$ has log canonical
singularities near $x$. Iterating this argument proves the equivalence.
\end{proof}

\begin{thm}\label{OpenLc}
Let $\pi\colon (X,B)\to S$ be a family of log varieties. Then the locus of points $x\in X$ 
where $(X_{\pi(x)},B_{\pi(x)})$ has log canonical singularities is open.
\end{thm}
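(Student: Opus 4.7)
\begin{sketch}
My plan is to show that the fiberwise lc locus $L = \{x \in X : (X_{\pi(x)}, B_{\pi(x)})\text{ is lc at }x\}$ is constructible and closed under generalization in $X$; on a Noetherian scheme these two conditions force $L$ to be open.

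For closure under generalization, let $x_0 \in L$ and $x' \in X$ with $x_0 \in \overline{\{x'\}}$. By the catenary property of schemes of finite type over $k$, it suffices to treat the case of a codimension-one specialization. If $\pi(x_0) = \pi(x')$, then $x_0$ and $x'$ lie in the same fiber and openness of the lc locus of the single log variety $(X_{\pi(x_0)}, B_{\pi(x_0)})$ gives $x' \in L$ immediately. Otherwise, by the valuative criterion choose a DVR $R$ with a morphism $\Spec R \to S$ sending its closed point to $\pi(x_0)$ and its generic point to $\pi(x')$, lifted compatibly to a $\Spec R$-point of $X$ through $x_0$ and $x'$. Base-changing (via the lemma preceding Lemma~\ref{ian}) produces a family of log varieties $(X_R, B_R) \to \Spec R$. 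Applying Lemma~\ref{ian} at the closed point $t_0$ of $\Spec R$ with a uniformizer as the sole regular parameter, the pair $(X_R, B_R + (X_R)_{t_0})$ is lc at the pullback of $x_0$, hence lc on some open neighborhood; this neighborhood automatically contains the pullback of $x'$ since that point is a generalization. Because $(X_R)_{t_0}$ does not meet the generic fiber, lc at the pullback of $x'$ for $(X_R, B_R + (X_R)_{t_0})$ is the same as lc for $(X_R, B_R)$, and Lemma~\ref{ian} at the generic point of $\Spec R$ (with empty regular system of parameters) identifies this with fiber lc at that pullback; by invariance of lc under the field extensions we conclude $x' \in L$.

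The remaining step, and the main technical obstacle, is establishing constructibility of $L$ in $X$. I would first reduce to $S$ smooth via a Hironaka resolution $\tilde S \to S$ and the observation that a subset of $X$ is open iff its preimage in $X \times_S \tilde S$ is open (the base change is proper surjective and lc of fibers is preserved by the field extensions $k(\tilde s)/k(s)$). Then by Noetherian induction on $\dim S$, after shrinking $S$ to a dense open $V$ one can arrange a simultaneous log resolution of $(X,B)$ over $V$ (generically available by Hironaka, shrinking $V$ further if needed); this produces a coherent sheaf of ideals on $\pi^{-1}(V)$ whose vanishing cuts out the fiberwise non-lc locus, making $L \cap \pi^{-1}(V)$ open. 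The inductive hypothesis applied to the restriction of the family to $S\setminus V$, a family of log varieties over a base of smaller dimension, handles the complement. Combined with closure under generalization, this proves that $L$ is open.
\end{sketch}
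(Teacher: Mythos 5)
Your overall architecture (constructible $+$ stable under generization $\Rightarrow$ open on a Noetherian scheme) is sound, and your constructibility step is essentially the paper's own argument: Noetherian induction plus a simultaneous log resolution over a dense open $V\subseteq S$. However, the closed-under-generization step has a genuine gap, and it is precisely the step the paper avoids.

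The problem is that you base change the family to $\Spec R$ for a DVR $R$, and then invoke Lemma~\ref{ian} and the openness of the lc locus on $(X_R,B_R+(X_R)_{t_0})$. But in this paper ``scheme'' means $k$-scheme of finite type, and the definition of a family of log varieties, the base change lemma preceding Lemma~\ref{ian}, Lemma~\ref{ian} itself (which relies on Kawakita's inversion of adjunction for $k$-varieties), and the openness of the lc locus are all set up in that finite-type framework. The DVR arising from a specialization $x'\leadsto x_0$ is essentially never of finite type over $k$, so $X_R$ is not a log variety in the sense of this paper and none of those results apply as stated. You would need to extend the whole apparatus (log resolutions, discrepancies, inversion of adjunction, openness of the lc locus) to schemes over such a DVR, or replace the valuative argument by a spreading-out argument over a genuine $k$-curve; neither is addressed.

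What the paper does instead is to prove the key containment $(X,B)_{-\infty}\subseteq Z(\pi)$ directly, using Lemma~\ref{ian} only in the legitimate setting where the total space is a $k$-variety and $S$ is a regular $k$-variety. Combined with the equality over a dense open $V\subseteq S$ coming from the simultaneous log resolution, one gets the decomposition $Z(\pi)=(X,B)_{-\infty}\cup Z(\pi|_{S'})$ with both terms closed in $X$, and Noetherian induction finishes the argument. That containment is exactly what your sketch is missing: once you have it, the generization step comes for free (since $(X,B)_{-\infty}$ is closed in the ambient $k$-variety $X$), and the DVR excursion becomes unnecessary. As written, your proposal trades the paper's elementary closedness argument for a valuative criterion that is not available in the paper's framework.
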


\begin{proof} We have to show that the complement
$
Z(\pi)=\cup_{s\in S(k)} (X_s,B_s)_{-\infty}
$
is closed in $X$. For this, it suffices to construct a non-empty open subset 
$V\subseteq S$ such that $Z(\pi)|\pi^{-1}(V)$ is closed.
Indeed, replacing $S$ by $S'=S\setminus V$ and $\pi$ by the induced
by base change family, we have 
$$
Z(\pi)=(Z(\pi)|\pi^{-1}(V)) \cup Z(\pi|{S'}).
$$ 
By noetherian induction, $Z(\pi|{S'})$ is closed. Therefore $Z(\pi)$ is closed.

It remains to prove the claim. For this, we may base change with the reduced structure
on $S$ and then restrict to the regular locus. Therefore $S$ is regular. 
By Lemma~\ref{famdes}, $(X,B)$ is a log variety.
We show that we have an inclusion $(X,B)_{-\infty}\subseteq Z(\pi)$, which is an equality
over some non-empty open subset of $S$. 

For the inclusion, let $x\in X\setminus Z(\pi)$, so that $(X_{\pi(x)},B_{\pi(x)})$ 
has log canonical singularities at $x$. By Lemma~\ref{ian}, $(X,B+\pi^*\Sigma_s)$ 
has log canonical singularities at $x$.
Therefore $(X,B)$ has log canonical singularities at $x$, that is $x\notin (X,B)_{-\infty}$.

By Hironaka, there exists a desingularization $\mu\colon X'\to X$ and a 
normal crossing divisor $\sum_i E_i$ which supports $B'=\mu^*(K_X+B)-K_{X'}$. 
After shrinking $S$ to an open subset, the morphism $(X',\sum_i E_i)\to S$ becomes log 
smooth. In this case, we show that the inclusion is an equality.
Assuming $(X,B)$ has log canonical singularities at $x$, we have to show that $(X_s,B_s)$
has log canonical singularities at $x$, where $s=\pi(x)$. We may shrink $X$ to a neighborhood of $x$, 
and suppose $(X,B)$ has log canonical singularities. That is the coefficients of $B'$ are at most $1$.
By adjunction, we have
$$
\mu_s^*(K_{X_s}+B_s)=K_{X'_s}+B'_s,
$$
where $B'_s=\sum_i b_i E_i|_{X'_s}$. Since $(X',\sum_i E_i)\to S$ is log smooth, a prime
divisor on $X'_s$ is contained in at most one $E_i$. Therefore the coefficients of $B'_s$
are some of the $b_i$'s, so at most $1$. Therefore $(X_s,B_s)$ has log canonical singularities at $x$.
\end{proof}

\begin{prop}\label{LctVar}
Let $\pi\colon (X,B)\to S$ be a real family of log varieties whose fibers have at most log canonical singularities.
Let $D$ be an effective Cartier divisor on $X$, relative over $S$.
Then the function $X\ni x \mapsto \lct_x(X_{\pi(x)},B_{\pi(x)};D_{\pi(x)})$ is 
lower semi-continuous and takes only finitely many values.
\end{prop}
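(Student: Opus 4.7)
The plan is to derive the lower semi-continuity directly from Theorem~\ref{OpenLc}, and to establish finiteness of values by a log resolution argument combined with noetherian induction on $S$.

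For semi-continuity, observe that for every rational $t \ge 0$ the pair $(X/S, B+tD)$ is again a family of log varieties, so Theorem~\ref{OpenLc} yields that the locus
\[
U_t=\{x\in X : (X_{\pi(x)},B_{\pi(x)}+tD_{\pi(x)}) \text{ is lc at } x\}
\]
is open in $X$. Since $B$ and $D$ are rational, $\lct_x$ is rational, and the set of $t$ for which $\mld_x(X_{\pi(x)},B_{\pi(x)}+tD_{\pi(x)})\ge 0$ is the closed interval $[0,\lct_x]$. Hence $\lct_x > t_0$ if and only if $x\in U_t$ for some rational $t>t_0$, so
\[
\{x : \lct_x(X_{\pi(x)},B_{\pi(x)};D_{\pi(x)}) > t_0\}=\bigcup_{t>t_0,\, t\in \Q} U_t
\]
is a union of open sets, proving the lower semi-continuity.

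For finiteness, I argue by noetherian induction on $S$. Base changing to the reduced structure on $S$ and restricting to its regular locus, I may assume $S$ is regular; by Lemma~\ref{famdes}, $(X,B)$ is then a log variety in the absolute sense. Choose a log resolution $\mu\colon X'\to X$ of $(X,B+D)$, and let $\sum_i E_i$ be the snc divisor supporting $\mu^{-1}(\Supp(B+D))\cup \Exc(\mu)$. After shrinking $S$ to a dense open subset $V$, I may arrange that $(X',\sum_i E_i)\to V$ is log smooth, so that for every $s\in V$ the restriction $\mu_s$ is a log resolution of $(X_s,B_s+D_s)$. Writing $\mu^*(K_X+B)=K_{X'}+\sum_i b_i E_i$ and $\mu^*D=\sum_i \delta_i E_i$, the standard formula on an snc pair gives
\[
\lct_x(X_s,B_s;D_s)=\min\Bigl\{\frac{1-b_i}{\delta_i} : \delta_i>0,\ E_{i,s}\cap \mu_s^{-1}(x)\ne \emptyset\Bigr\}
\]
for every $x\in \pi^{-1}(V)$. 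Since there are only finitely many indices $i$, the function takes only finitely many values on $\pi^{-1}(V)$. The complement $S\setminus V$, endowed with its reduced structure, has strictly smaller dimension and carries an induced family of log varieties, to which I apply the inductive hypothesis.

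The main obstacle is arranging the log resolution $\mu$ so that, after a generic shrinking of $S$, the pair $(X',\sum_i E_i)$ is relatively log smooth over $V$; this is where the uniformity across fibers is encoded and is what upgrades Hironaka's fiberwise statement to a genuinely relative one. Once this is in place, the explicit min-of-linear-functionals formula above also yields an alternative derivation of semi-continuity on $\pi^{-1}(V)$.
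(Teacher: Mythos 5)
Your proposal is correct and follows essentially the same route as the paper's proof: both parts use Theorem~\ref{OpenLc} applied to the families $(X/S,B+tD)$ for rational $t$, and both establish finiteness by noetherian induction on $S$ together with a log resolution of $(X,B+D)$ that becomes relatively log smooth after generically shrinking the base. The only cosmetic difference is in the semi-continuity step: the paper observes directly that each superlevel set $\{x:\lct_x\ge t\}$ is open (the lc locus of the family $(X,B+tD)\to S$), whereas you take the union over rational $t>t_0$ of these open sets to get openness of $\{x:\lct_x>t_0\}$; both give lower semi-continuity, with the paper's formulation marginally stronger and more direct.
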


\begin{proof} Fix $t\ge 0$. Then $\lct_x(X_{\pi(x)},B_{\pi(x)};D_{\pi(x)})\ge t$ if and only if 
$(X_{\pi(x)},B_{\pi(x)}+tD_{\pi(x)})$ has log canonical singularities at $x$.
Since $(X,B+tD|_U)\to S$ is a family of log varieties, the locus of such $x$ is
open by Theorem~\ref{OpenLc}. Therefore the function is lower semi-continuous.

To show that the function takes only finitely many values, it suffices to prove this holds over
some open non-empty subset of $S$ (by noetherian induction). Then we may assume $S$
is reduced and regular. Let $X'\to X$ be a desingularization, endowed with a 
normal crossing divisor $\sum_i E_i$ which supports both $B'=\mu^*(K_X+B)-K_{X'}$
and $D'=\mu^*D$. We finally shrink $S$ to an open subset such that $(X,\sum_i E_i)\to S$
becomes log smooth. We have 
$$
\mu_s^*(K_{X_s}+B_s+tD_s)=K_{X'_s}+B'_s+tD'_s,
$$
and the coefficients of $B'_s+tD'_s$ are some of the $b'_i+td'_i$. Therefore each
$\lct_x(X_s,B_s;D_s)$ is either $+\infty$, or the largest $t\ge 0$ such that 
$b'_i+td'_i\le 1$ for certain $i$. They belong to a finite set.
\end{proof}

\begin{cor}\label{8.5}
Let $\pi\colon (X,B)\to S$ be a real family of log varieties. Let 
$Z\subseteq X$ be a closed subset, such that $Z\to S$ is proper surjective, and 
$(X_s,B_s)$ has log canonical singularities near $Z_s$ for every $s\in S$.
Let $D$ be an effective Cartier divisor on $X$, relative over $S$.
Then the function $S\ni s\mapsto \lct_{Z_s}(X_s,B_s;D_s)$ is 
lower semi-continuous and takes only finitely many values.
\end{cor}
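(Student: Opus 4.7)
The plan is to reduce the statement to Proposition~\ref{LctVar} via restriction to the open locus where fibers are log canonical, and then exploit the properness of $Z\to S$ to push the lower semi-continuity and finiteness properties down to $S$.

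First I would set $W\subseteq X$ to be the open subset, furnished by Theorem~\ref{OpenLc}, consisting of those $x\in X$ where $(X_{\pi(x)},B_{\pi(x)})$ has log canonical singularities at $x$. The hypothesis that $(X_s,B_s)$ has log canonical singularities near $Z_s$ for every $s\in S$ forces $Z\subseteq W$. The restriction $(W/S,B|_{U_\pi\cap W})$ is again a family of log varieties (normality, flatness, and the index condition all restrict to opens), and its fibers have log canonical singularities by construction. The restricted divisor $D|_W$ is still an effective Cartier divisor relative over $S$, so Proposition~\ref{LctVar} applies.

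From Proposition~\ref{LctVar} I obtain that the function
$$
\varphi\colon W\to \R\cup\{+\infty\},\quad x\mapsto \lct_x(X_{\pi(x)},B_{\pi(x)};D_{\pi(x)}),
$$
is lower semi-continuous on $W$ and takes only finitely many values, say in a finite set $\Sigma\subseteq\R\cup\{+\infty\}$. For each $c\in \Sigma$, the sublevel set $F_c=\{x\in W:\varphi(x)\le c\}$ is closed in $W$. Since $Z\subseteq W$ is closed in $X$ (hence closed in $W$), the intersection $Z\cap F_c$ is closed in $Z$. Because $Z\to S$ is proper, the image $\pi(Z\cap F_c)$ is closed in $S$.

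To conclude, for any $s\in S$ the fiber $Z_s$ is proper over $k$ and $\varphi|_{Z_s}$ takes values in the finite set $\Sigma$, so the infimum defining $\lct_{Z_s}(X_s,B_s;D_s)=\inf_{z\in Z_s}\varphi(z)$ is attained and lies in $\Sigma$; in particular the target function takes only finitely many values. Moreover, for $c\in\Sigma$,
$$
\{s\in S:\lct_{Z_s}(X_s,B_s;D_s)\le c\}=\pi(Z\cap F_c),
$$
which is closed in $S$. Since $\Sigma$ is finite, this implies that $\{s:\lct_{Z_s}(X_s,B_s;D_s)\le c\}$ is closed for every $c\in\R$, proving lower semi-continuity. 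The only mildly delicate point is the verification that restriction to $W$ yields a genuine family of log varieties in the sense of the paper, but this is routine since smoothness, flatness and the sheaves $\omega^{[l]}$ all restrict well to opens.
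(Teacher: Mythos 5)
Your proof is correct and follows essentially the same route as the paper: shrink to the open locus where fibers are log canonical via Theorem~\ref{OpenLc}, apply Proposition~\ref{LctVar} to the restricted family, and use properness of $Z\to S$ to push down closedness of level sets. The only cosmetic difference is that you work with $\{f\le c\}$ for $c$ in the finite value set while the paper uses $\{f<t\}$; since the function takes finitely many values, these formulations of lower semi-continuity are equivalent.
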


\begin{proof} By Theorem~\ref{OpenLc}, we may shrink $X$ near $Z$, so that the fibers of 
$\pi$ have log canonical singularities.
Denote $\gamma(x)=\lct_x(X_{\pi(x)},B_{\pi(x)};D_{\pi(x)})$.
Then $\lct_{Z_s}(X_s,B_s;D_s)=\min_{x\in Z_s}\gamma(x)$, and 
$$
\{s\in S;\lct_{Z_s}(X_s,B_s;D_s)<t\}=\pi(Z\cap \{x\in X;\gamma(x)<t\}).
$$ 
So it follows from Proposition~\ref{LctVar}.
\end{proof}


\section{Lct-variation in a linear system}


Let $(X/k,B)$ be a geometric log variety. Let $\Lambda$ be a non-empty, finite dimensional
linear system on $X$. That is $\cL$ is an invertible $\cO_X$-module, $V\subseteq \Gamma(X,\cL)$
is a non-zero finite dimensional $k$-vector subspace, and $\Lambda$ is the family of 
divisors of zeros of sections in $V$. Using a basis of $V$, we may identify $\Lambda$ with $\bP^n_k$,
where $n$ is the dimension of $\Lambda$. Inside $X\times \Lambda$ we have the universal divisor 
$H$, given by $\sum_{i=0}^ns_i(x)\lambda_i=0$, where $s_0,\ldots,s_n$ is basis of $V$ over $k$.

\begin{prop}\label{8.7} The function $(X,B)_{lc} \times \Lambda\to [0,\infty],
(P,D)\mapsto \lct_P(X,B;D)$ is lower semicontinuous and takes only finitely
many values.
\end{prop}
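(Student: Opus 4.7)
The natural approach is to reduce the statement to Proposition~\ref{LctVar} applied to the trivial family $\tilde\pi\colon \tilde X = (X,B)_{lc}\times \Lambda \to \Lambda$, where $\Lambda\cong \bP^n_k$ serves as the base $S$. The boundary on $\tilde X$ is the pullback of $B|_{(X,B)_{lc}}$ under the first projection; by Example~2) of Section~1, this is a (trivial) family of log varieties, and its fiber over every point $D\in\Lambda$ is the lc log variety $((X,B)_{lc}, B|_{(X,B)_{lc}})$, so the hypothesis of log canonical fibers in Proposition~\ref{LctVar} is automatic.

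Next, I would introduce the universal divisor $H\subset X\times \Lambda$, cut out by the section $\sum_i s_i(x)\lambda_i$ of $p_1^*\cL \otimes p_2^*\cO_\Lambda(1)$, and consider its restriction $H|_{\tilde X}$. Once this is shown to be an effective Cartier divisor on $\tilde X$ relative over $\Lambda$, Proposition~\ref{LctVar} directly yields that the function
$$
\tilde X \ni (P,D)\mapsto \lct_P\bigl(\tilde X_D,\tilde B_D; (H|_{\tilde X})_D\bigr) = \lct_P(X,B;D)
$$
is lower semicontinuous and attains only finitely many values, which is precisely the claim (using that for $P\in(X,B)_{lc}$, the threshold is computed on the open set $(X,B)_{lc}$).

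The only real verification — and therefore the main obstacle — is that $H|_{\tilde X}$ is relative over $\Lambda$ in the sense of Section~1. Cartier-ness is clear from the defining bihomogeneous equation. For relativity I would use criterion (c): the fiber of $H|_{\tilde X}$ over $D\in\Lambda$ is just the divisor $D\cap (X,B)_{lc}$. Since $\Lambda$ parametrizes zero loci of \emph{nonzero} sections of $\cL$, each such fiber is a proper effective Cartier divisor on the normal variety $(X,B)_{lc}$, hence contains no irreducible component of it, i.e.\ no associated prime. This disposes of the only subtlety. (One should also remark that the trivial family satisfies the index condition: if $r$ is the index of $(X,B)$, then $\omega^{[r]}_{\tilde X/\Lambda}$ is the pullback of $\omega^{[r]}_{(X,B)}$ under the first projection, hence invertible.) With these checks in place, Proposition~\ref{LctVar} applies and the proof is complete.
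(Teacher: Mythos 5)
Your proof is correct, and it follows a genuinely different, simpler route than the paper's. The paper forms the family $\cX = X\times X\times\Lambda \to S = X\times\Lambda$ with $\cB = p_1^*B$ and $\cH = p_{13}^*H$, uses the diagonal section $\sigma(P,D) = (P,P,D)$, applies Proposition~\ref{LctVar} to the family $\cX\to S$ (restricted so the fibers become log canonical), and then pulls the resulting semicontinuous function back along the closed immersion $\sigma$ to get the function on $(X,B)_{lc}\times\Lambda$. You instead take the total space to be $(X,B)_{lc}\times\Lambda$ directly and the base to be $\Lambda$ alone. This avoids the extra $X$-factor and the section trick entirely: the point $P$ already sits inside the fiber, so the function produced by Proposition~\ref{LctVar} \emph{is} the function in the statement, with no restriction step needed. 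Notably, your construction is the same one the paper uses one theorem later, for the variation of $\lct_Z(X,B;\cdot)$ on $\Lambda$; there the author takes $\cX = X\times_k\Lambda \to \Lambda$ with the universal divisor $\cH\subset X\times\Lambda$, which is exactly your family after restricting the first factor. Your verifications — that the restricted family is the trivial family of Example~2 (hence satisfies the index condition), that the fibers are log canonical by construction, and that $H|_{\tilde X}$ is relative over $\Lambda$ by criterion (c) because each member of $\Lambda$ is a proper divisor of the normal variety $(X,B)_{lc}$ — are the right ones and are complete. The only thing worth adding is the one-line observation that composition of a lower semicontinuous function with the identification of $\tilde X_D$ with $(X,B)_{lc}$ via the first projection is what yields the equality $\lct_{(P,D)}(\tilde X_D,\tilde B_D;\cH_D) = \lct_P(X,B;D)$, but you do say this in passing. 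In short: correct, and arguably cleaner than the paper's own argument.
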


\begin{proof} Denote $\cX=X\times X\times \Lambda$, $S=X\times \Lambda$,
and $\pi=p_{23}\colon \cX\to S$ the projection on the last two factors.
Let $\sigma$ be the section of $\pi$ which is the diagonal on $X$ and 
the identity on $\Lambda$. Let $\cB=p_1^*B$ and $\cH=p_{13}^*H$.
Then $\pi\colon (\cX,\cB)\to S$ is a family of log varieties, $\sigma$ is a section of 
$\pi$, and $\cH$ is an effective Cartier divisor relative over $S$. 
For $s=(P,D)\in S$, the fiber $(\cX_s,\cB_s+t\cH_s)$ is isomorphic to $(X,B+tD)$
and $\sigma(s)\in \cX_s$ corresponds to $P\in X$. If we restrict the family
to $(X,B)_{lc}\times \Lambda$, it becomes log canonical, and 
$\lct_{\sigma(s)}(\cX_s,\cB_s;\cH_s)=\lct_P(X,B;D)$. We conclude by Proposition~\ref{LctVar}.
\end{proof}

\begin{exmp}
$\lct_P(X,B;D)<+\infty$ if and only if $(P,D)\in H$.
\end{exmp}

\begin{exmp}
Endow the affine line $\bA^1_k$ with a boundary $B=\sum_P b_P P$,
where $b_P\in [0,1]$, and only finitely many are non-zero.
Let $f_0,\ldots,f_n\in k[t]$ be polynomials, linearly 
independent over $k$. They induce a linear system 
$\Lambda=\{D_\lambda=\text{div}(\sum_i\lambda_if_i);\lambda\in \bP^n\}$.
Denote by $\partial$ the canonical derivation of $k[t]$.
Then $\lct_P(\bA^1,B;D_\lambda)<t$ if and only if 
$\sum_i \frac{\partial^mf_i(P)}{m!}\lambda_i=0$ for every integer
$1\le m\le \frac{1-b_P}{t}$.
\end{exmp}

\begin{thm}
Let $x\in (X,B)_{lc}$ be a scheme point. The function $\lct_x(X,B;\cdot)\colon \Lambda\to [0,+\infty]$ 
is lower semicontinuous and takes only finitely many values.
\end{thm}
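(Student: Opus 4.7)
I plan to deduce the claim from Proposition~\ref{8.7} by ``slicing'' the joint function
$(P,D)\mapsto \lct_P(X,B;D)$ at $P=x$. The setup: let $W=\overline{\{x\}}$, so $W$ is
irreducible with generic point $x$, and $W\cap (X,B)_{lc}$ is open and dense in $W$.

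The key identification I aim for, for each $c\in \R$, is
$$
\{D\in \Lambda : \lct_x(X,B;D)>c\}
= p_\Lambda\bigl(V_c\cap((W\cap (X,B)_{lc})\times \Lambda)\bigr),
$$
where $V_c=\{(P,D)\in (X,B)_{lc}\times\Lambda : \lct_P(X,B;D)>c\}$ is open by
Proposition~\ref{8.7} and $p_\Lambda$ is the projection to $\Lambda$. The inclusion $\supseteq$
is immediate. For the reverse inclusion I will use that Zariski open sets are stable under
generization: for fixed closed $D$, the open slice
$V_c(D)=\{P\in (X,B)_{lc}:\lct_P(X,B;D)>c\}\subset (X,B)_{lc}$ contains the generic
point $x$ of $W$ if and only if it meets $W\cap (X,B)_{lc}$, which is exactly the condition
that some $P\in W\cap (X,B)_{lc}$ realizes $\lct_P(X,B;D)>c$.

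Once the identification is established, lower semicontinuity on $\Lambda$ follows because
projection from a product is an open map, so the displayed superlevel set is open in $\Lambda$.
Finite-valuedness is inherited: every value of $\lct_x(X,B;\cdot)$ is attained by
$\lct_P(X,B;D)$ for some $P\in W\cap(X,B)_{lc}$, and there are only finitely many such values
by Proposition~\ref{8.7}. The main subtlety I expect is the generization step, which
implicitly requires that the lower semicontinuity in Proposition~\ref{8.7} is valid at the
\emph{scheme} point $x$ and not merely at closed points; this is genuine, since the proof of
Proposition~\ref{8.7} is phrased via Proposition~\ref{LctVar}, itself stated for arbitrary
scheme points of the total space of a family.
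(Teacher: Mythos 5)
Your proof is correct. The paper's proof is a dual version of yours: it quotes the identity $\lct_x(X,B;D)=\max_{P\in\bar{x}}\lct_P(X,B;D)$ as a known property of log canonical thresholds at scheme points, and then expresses the sublevel set $\{D\in\Lambda:\lct_x(X,B;D)<t\}$ as the intersection $\bigcap_{P\in\bar{x}}\{D\in\Lambda:\lct_P(X,B;D)<t\}$ of closed sets (closed by Proposition~\ref{8.7}), thereby avoiding any appeal to openness of projections. Your formulation via superlevel sets and images under $p_\Lambda$ is equivalent; what you gain is a self-contained derivation, from the joint lower semicontinuity of Proposition~\ref{8.7}, of the nontrivial half of that identity --- namely that $\lct_P(X,B;D)>c$ for some specialization $P$ of $x$ forces $\lct_x(X,B;D)>c$ --- at the modest cost of invoking that $p_\Lambda$ is an open map (flat and of finite type, so open). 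Your closing observation that Propositions~\ref{LctVar} and~\ref{8.7} genuinely apply at all scheme points is correct and is the crux making the generization step legitimate. Two small slips, neither fatal: you have the two inclusions mislabeled --- in your displayed identity, $\supseteq$ is the direction that requires the generization argument (passing from a specialization $P$ back up to $x$), while $\subseteq$ is the truly immediate one, obtained by taking $P=x\in W\cap(X,B)_{lc}$; since you in fact prove an equivalence, this does not affect the argument. Also, ``for fixed closed $D$'' should read ``for every $D\in\Lambda$'', since the identity is needed at every point of $\Lambda$, and your argument works unchanged without the closedness restriction.
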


\begin{proof} We have $\lct_x(X,B;D)=\max_{P\in \bar{x}}\lct_P(X,B;D)$. By Proposition~\ref{8.7},
it takes only finitely many values. For $t>0$, we have 
$$
\{D\in \Lambda;\lct_x(X,B;D)<t\}=\cap_{P\in \bar{x}}\{D\in \Lambda; \lct_P(X,B;D)<t\}.
$$
Each term on the right hand side is closed, by Proposition~\ref{8.7}. Therefore the left hand side
is also closed.
\end{proof}

In particular, $\lct_x(X,B;\cdot)$ attains its maximum (resp. minimum)
on a dense open (resp. special closed) subset of $\Lambda$. Define 
$$
\gamma_x(X,B;\Lambda)=\min_{D\in \Lambda} \lct_x(X,B;D).
$$
Denote $\mu_x(X,B;\Lambda)=1/\gamma_x(X,B;\Lambda)$, so that 
$
\mu_x(X,B;\Lambda)=\max_{D\in \Lambda} \mu_x(X,B;D).
$

\begin{thm}\label{3.5}
The function $(X,B)_{lc}\to [0,+\infty], P\mapsto \gamma_P(X,B;\Lambda)$ is lower semicontinuous
and takes only finitely many values.
\end{thm}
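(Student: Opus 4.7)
The plan is to deduce this from Proposition~\ref{8.7} together with the fact that $\Lambda\cong \bP^n_k$ is complete, so the projection $p_1\colon (X,B)_{lc}\times \Lambda\to (X,B)_{lc}$ is a closed map.

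First I would fix $t\in[0,+\infty)$ and consider the set
$$
Z_t=\{(P,D)\in (X,B)_{lc}\times\Lambda; \lct_P(X,B;D)\le t\}.
$$
By Proposition~\ref{8.7}, the function $(P,D)\mapsto \lct_P(X,B;D)$ is lower semicontinuous, so $Z_t$ is closed in $(X,B)_{lc}\times\Lambda$. Since $\Lambda\cong \bP^n_k$ is proper over $k$, the projection $p_1$ is closed. Therefore $p_1(Z_t)$ is closed in $(X,B)_{lc}$. But
$$
p_1(Z_t)=\{P\in (X,B)_{lc};\exists\, D\in\Lambda,\ \lct_P(X,B;D)\le t\}
=\{P; \gamma_P(X,B;\Lambda)\le t\},
$$
using that the minimum in the definition of $\gamma_P$ is attained (Proposition~\ref{8.7} ensures $\lct_P(X,B;\cdot)$ attains its minimum on $\Lambda$, since it takes only finitely many values). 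This shows $\{P;\gamma_P\le t\}$ is closed for every $t$, i.e., $P\mapsto \gamma_P(X,B;\Lambda)$ is lower semicontinuous.

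For the finiteness of values, I would invoke the second conclusion of Proposition~\ref{8.7}: the function $(P,D)\mapsto \lct_P(X,B;D)$ takes only finitely many values, say in a finite set $F\subset [0,+\infty]$. For each $P$, $\gamma_P(X,B;\Lambda)$ is the minimum of values of $\lct_P(X,B;D)$ as $D$ ranges over $\Lambda$, hence belongs to $F$. Therefore $\gamma_P$ takes only finitely many values as $P$ varies.

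There is no real obstacle: the only subtle point is why $p_1(Z_t)$ equals the sublevel set, which uses that the minimum in the definition of $\gamma_P$ is actually attained (equivalently, that the fiber $\{P\}\times \Lambda\cap Z_t$ is nonempty exactly when $\gamma_P\le t$); this is immediate from the finiteness of values on $\Lambda$. Everything else is formal closedness and properness.
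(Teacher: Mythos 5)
Your proposal is correct and follows essentially the same route as the paper: use Proposition~\ref{8.7}, take the (sub)level set in $(X,B)_{lc}\times\Lambda$, and project by the proper map $p_1$. The only cosmetic difference is that you phrase the argument with sublevel sets $\{\lct\le t\}$ (so you explicitly invoke that the minimum over $\Lambda$ is attained), whereas the paper uses strict sublevel sets $\{\lct<t\}$, for which the equivalence with $\{\gamma_P<t\}$ is automatic; both versions are equivalent here since the functions take finitely many values.
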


\begin{proof} 
By Proposition~\ref{8.7}, the values belong to a finite set. And 
$\{P\in (X,B)_{lc};\gamma_P(X,B;\Lambda)<t\}$ is the projection on the first factor
of $\{(P,D)\in (X,B)_{lc}\times \Lambda ;\gamma_P(X,B;D)<t\}$. The latter is closed
by Proposition~\ref{8.7}, and since $\Lambda$ is proper over $k$, it follows that our
level set is closed.
\end{proof}

\begin{thm}
Let $Z\subseteq X$ be a closed subset such that $Z/k$ is proper and 
$(X,B)$ has log canonical singularities near $Z$. 
The function $\Lambda\to [0,\infty], D\mapsto \lct_Z(X,B;D)$ is lower 
semicontinuous and takes only finitely many values.
\end{thm}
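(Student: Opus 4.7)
The plan is to adapt the proof of Theorem~\ref{3.5}, this time projecting onto the linear system $\Lambda$ rather than onto $X$. The two hypotheses are placed precisely so that this works: since $(X,B)$ has log canonical singularities near $Z$, we have $Z \subseteq (X,B)_{lc}$, so that Proposition~\ref{8.7} applies on $Z \times \Lambda$; properness of $Z/k$ will ensure the resulting projection to $\Lambda$ is closed. The starting point is the identity
$$
\lct_Z(X,B;D) = \min_{P \in Z}\lct_P(X,B;D),
$$
the infimum being attained as a minimum because, by Proposition~\ref{8.7}, the function $P \mapsto \lct_P(X,B;D)$ takes only finitely many values on $Z$.

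For lower semicontinuity, fix $t \in [0,+\infty]$ and observe
$$
\{D \in \Lambda : \lct_Z(X,B;D) < t\} = p_\Lambda\bigl(\{(P,D) \in Z \times \Lambda : \lct_P(X,B;D) < t\}\bigr),
$$
where $p_\Lambda \colon Z \times \Lambda \to \Lambda$ is the second projection. By Proposition~\ref{8.7} the function $(P,D) \mapsto \lct_P(X,B;D)$ is lower semicontinuous with finite image on $(X,B)_{lc} \times \Lambda$; a lower semicontinuous function with finite image has every strict sub-level set closed (it coincides with a weak sub-level set at the largest attained value below $t$), so the set on the right is closed in $Z \times \Lambda$. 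Since $Z/k$ is proper, $p_\Lambda$ is a closed map, and the image is closed in $\Lambda$.

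Finiteness of the range is then immediate: by the displayed identity every value of $D \mapsto \lct_Z(X,B;D)$ is also a value of $(P,D) \mapsto \lct_P(X,B;D)$ on $(X,B)_{lc} \times \Lambda$, and Proposition~\ref{8.7} provides only finitely many such values. There is no substantial obstacle beyond invoking the correct form of Proposition~\ref{8.7}; the essential hypothesis is properness of $Z/k$, used precisely to make the projection $p_\Lambda$ closed. Alternatively one could deduce the statement directly from Corollary~\ref{8.5}, applied to the trivial family $(X \times \Lambda,\, B \times \Lambda) \to \Lambda$ together with the universal divisor $H$ (after discarding any base component so that $H$ is relative Cartier over $\Lambda$), taking $Z \times \Lambda \subseteq X \times \Lambda$ as the proper closed subset over $\Lambda$.
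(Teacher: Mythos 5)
Your proposal is correct, and in fact you supply two valid arguments, one of which coincides with the paper's. Your main argument adapts the proof of Theorem~\ref{3.5} by projecting from $Z\times\Lambda$ onto the second factor: you rewrite $\lct_Z(X,B;D)=\min_{P\in Z}\lct_P(X,B;D)$, express the strict sub-level set in $\Lambda$ as the image of a strict sub-level set in $Z\times\Lambda$, note that the latter is closed because the function of Proposition~\ref{8.7} is lower semicontinuous with finite image, and use properness of $Z/k$ to conclude the image in $\Lambda$ is closed. The paper instead proceeds at one level of abstraction higher: it forms the trivial family $(\cX,\cB)=(X\times\Lambda,\,p_1^*B)\to\Lambda$, takes $\cZ=Z\times\Lambda$ proper over $\Lambda$ and the universal divisor $\cH$ as the relative Cartier divisor, identifies $\lct_{\cZ_s}(\cX_s,\cB_s;\cH_s)=\lct_Z(X,B;D)$ fiberwise, and invokes Corollary~\ref{8.5} directly --- precisely the alternative you sketch at the end. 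Both routes ultimately rest on Proposition~\ref{LctVar} and Theorem~\ref{OpenLc}; your main argument just unwinds Corollary~\ref{8.5} by hand, which is a perfectly legitimate and slightly more explicit way to present the same content. One small remark on your parenthetical ``after discarding any base component so that $H$ is relative Cartier over $\Lambda$'': this precaution is unnecessary. Since $X$ is an integral normal variety and every member of $\Lambda$ is an honest effective divisor, no fiber $X\times\{D\}$ of the projection is contained in $\Supp H$, so $H$ is automatically relative Cartier over $\Lambda$ regardless of whether $\Lambda$ has fixed components.
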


\begin{proof} Denote $\cX=X\times_k \Lambda$, $S=\Lambda$,
and $\pi\colon \cX\to S$ the second projection. Let $\cB=p_1^*B$
and $\cH\subset X\times \Lambda$ the universal divisor.
 Then $\pi\colon (\cX,\cB)\to S$ is a family of log varieties, $\cH$ is an effective 
 Cartier divisor relative over $S$, $\cZ=Z\times S$ is a closed subset of $X$ which
 is proper over $S$. For $s=[D]\in S$, the fiber $(\cX_s,\cB_s+t\cH_s)$ is isomorphic to $(X,B+tD)$
 and $\lct_{\cZ_s}(\cX_s,\cB_s;\cH_s)=\lct_Z(X,B;D)$. We conclude by Corollary~\ref{8.5}.
\end{proof}

In particular, $\lct_Z(X,B;\cdot)$ attains the maximal 
(resp. minimal) value on a dense open (resp. special closed) subset of $\Lambda$. Define
$$
\gamma_Z(X,B;\Lambda)=\min_{D\in \Lambda} \lct_Z(X,B;D).
$$
If $X/k$ is proper, denote $\gamma_X(X,B;\Lambda)$ by $\gamma(X,B;\Lambda)$.
Define similarly $\mu_Z(X,B;\Lambda)$ and $\mu(X,B;\Lambda)$

\begin{thm}\label{pt}
Let $(X_i/k,B_i)$ be finitely many proper log varieties, with log canonical singularities.
Let $|L_i|$ be non-empty complete linear systems on $X_i$.
Let $X=\prod_i X_i$, $B=\sum_i p_i^*(B_i)$, $L=\sum_i p_i^*(L_i)$.
Then the product log variety $(X,B)$ has log canonical singularities,
the complete linear system $|L|$ is non-empty, and 
$$
\gamma(X,B;|L|)=\min_i \gamma(X_i,B_i;|L_i|).
$$
\end{thm}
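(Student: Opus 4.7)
The plan is to induct on the number of factors, which immediately reduces to the two-factor case $X=X_1\times X_2$. The K\"unneth formula $H^0(X,L)=H^0(X_1,L_1)\otimes H^0(X_2,L_2)\ne 0$ shows $|L|$ is non-empty, and taking the product of log resolutions of the pairs $(X_i,B_i)$ yields a log resolution of $(X,B)$ whose boundary has snc support with coefficients at most one, so $(X,B)$ is log canonical.

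For the easy inequality $\gamma(X,B;|L|)\le \min_i\gamma(X_i,B_i;|L_i|)$, I would test on the split divisors $p_1^*D_1+p_2^*D_2\in|L|$ with $D_i\in|L_i|$. The standard product formula on a product of log pairs, proved by taking the product of log resolutions of the $(X_i,B_i+tD_i)$, reads
\[
\lct\bigl(X,B;\,p_1^*D_1+p_2^*D_2\bigr)=\min\bigl(\lct(X_1,B_1;D_1),\,\lct(X_2,B_2;D_2)\bigr),
\]
and infimising over $D_i$ yields the bound.

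For the reverse inequality, set $c=\min_i\gamma(X_i,B_i;|L_i|)$, fix $D\in|L|$ and a rational $0<t<c$; the goal is that $(X,B+tD)$ is log canonical on all of $X$. By Theorem~\ref{OpenLc} the log canonical locus is open, so I would cover $X$ by a slicing argument. Writing a defining section of $D$ via the K\"unneth expansion, the subset $V_1\subseteq X_1$ of points $P_1$ with $X_1$ smooth at $P_1$, $P_1\notin\Supp B_1$, and $\{P_1\}\times X_2\not\subseteq\Supp D$ is open and dense (the failure of the last condition is the common zero locus of finitely many nonzero sections of $L_1$). For any $P_1\in V_1$ the restriction $D_{P_1}:=D|_{\{P_1\}\times X_2}$ lies in $|L_2|$, hence $(X_2,B_2+tD_{P_1})$ is log canonical. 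Choosing a regular system of parameters for $\cO_{X_1,P_1}$ and applying Lemma~\ref{ian}, the pair $(X,B+tD+p_1^*\Sigma_{P_1})$ is log canonical near $\{P_1\}\times X_2$, and dropping the extra effective divisor preserves log canonicity. This yields lc-ness on $V_1\times X_2$, and by symmetry on $X_1\times V_2$.

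The main obstacle is to cover the remaining closed subset $(X_1\setminus V_1)\times(X_2\setminus V_2)$, a product of proper closed subsets. I would handle it by noetherian induction on $\dim X_1+\dim X_2$: after replacing each $(X_i,B_i)$ by a crepant log resolution (which preserves $\gamma$) the $X_i$ become smooth, and $X_i\setminus V_i$ is contained in $\Supp B_i$ together with the locus of $P_i$ where $\{P_i\}\times X_{3-i}\subseteq\Supp D$, the latter being a union of pullbacks $V\times X_{3-i}$ with $V\subsetneq X_i$ proper closed. Stratifying the leftover closed subset by irreducible products of such subvarieties and invoking the inductive hypothesis together with adjunction for the restricted boundary finishes the argument; this last step is the technically delicate portion of the proof.
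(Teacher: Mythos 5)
Your treatment of the easy inequality and of the fact that $(X,B)$ is log canonical with $|L|\ne\emptyset$ is fine and matches the paper. Your opening move for the hard inequality --- viewing $(X, p_2^*B_2 + tD) \to X_1$ as a family of log varieties near a point $P_1\in V_1$ and invoking Lemma~\ref{ian} --- is also genuinely the same mechanism the paper uses, and it does establish log canonicity on the dense open set $V_1\times X_2\cup X_1\times V_2$.

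The gap is in the final paragraph. Having lc-ness on a dense open $U\subset X$ does not force lc-ness on all of $X$; the non-lc locus $Z$ is closed and a priori could be any nonempty closed subset of $(X_1\setminus V_1)\times(X_2\setminus V_2)$. You propose to close this by noetherian induction on $\dim X_1+\dim X_2$, stratifying the leftover and invoking ``adjunction for the restricted boundary,'' but this is not a proof: the strata of $(X_1\setminus V_1)\times(X_2\setminus V_2)$ carry no natural log-pair structure, the coefficients of $B_i$ need not be $1$ so the components of $\Supp B_i$ are not lc centers and no adjunction formula is available, and even after passing to crepant resolutions there is no mechanism relating $\gamma$ of the original linear systems to anything on a stratum. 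In short, the induction has no base case, no inductive hypothesis that applies, and no way to control $\lct$ on the leftover.

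The paper resolves precisely this difficulty by a different argument. It sets $Z=(X,B+tD)_{-\infty}$ and proves the stronger statement that $Z$ is saturated under \emph{both} projections, i.e.\ $Z=\pi_i^{-1}\pi_i(Z)$ for $i=1,2$; since $Z\ne X$, this forces $Z=\emptyset$. To prove saturation under $\pi_2$ one fixes an \emph{arbitrary} closed point $P\in X_2$ with $X_P\nsubseteq Z$ (not a generic one) and must show $X_P\cap Z=\emptyset$. The new ingredient that handles the bad fibers you could not reach --- those with $X_P\subseteq\Supp D$ or $P\in\Supp B_2$ or $P\in\Sing X_2$ --- is Hironaka's flattening: one takes a resolution $g\colon X_2'\to X_2$ so that on $X'=X_1\times X_2'$ the pulled-back divisor decomposes as $D'=D''+\pi^*D_2$ with $D''$ relative over $X_2'$ and $D_2$ an nc divisor on $X_2'$. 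Then for every $Q\in g^{-1}(P)$ the horizontal slice $D''|_{X_Q'}$ lies in $|L_1|$, so Lemma~\ref{ian} gives lc-ness of $(X',p_1^*B_1+tD''+{\pi'}^*\Sigma_Q)$ near $X_Q'$; and the hypothesis $X_P\nsubseteq Z$ gives lc-ness of $(X',B'+tD')$ at the generic point of $X_Q'$, which forces the vertical boundary $B_2'+tD_2$ to be $\le\Sigma_Q$ for a suitable choice of parameters. Combining and going back down via log crepance gives $X_P\cap Z=\emptyset$. This flattening decomposition is the key idea your proposal is missing, and without it the dense-open argument does not close up.
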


\begin{proof} By induction, suffices to consider only two factors.

Let $D_i\in |L_i|$, for $i=1,2$. Let $t_i\ge 0$ be maximal such that
$(X_i,B_i+t_iD_i)$ has log canonical singularities. Set $t=\min(t_1,t_2)$
and $D=p_1^*D_1+p_2^*D_2$. Then $D\in |L|$ and $t$ is maximal
such that $(X,B+tD)$ has log canonical singularities. Therefore
$\gamma\le \lct(X,B;D)=t$. Taking minimum after all members, we obtain
$\gamma(X,B;|L|)\le \min_i \gamma(X_i,B_i;|L_i|)$.

Suppose by contradiction that $\gamma(X,B;|L|)<\min_i \gamma(X_i,B_i;|L_i|)$.
That is, there exists $D\in |L|$ and $t\le \min_i \gamma(X_i,B_i;|L_i|)$ such 
that $(X,B+tD)$ does not have log canonical singularities. Denote 
$Z=(X,B+tD)_{-\infty}$. It is a proper subset of $X$.
If we show that $Z=\pi_i^{-1}\pi_i(Z)$ for every $i$, it follows that $Z=\emptyset$,
a contradiction.

We prove the claim for the second projection $\pi\colon X\to X_2$. Choose a 
closed point $P\in X_2$ and suppose $X_P\nsubseteq Z$. We will show that 
$X_P\cap Z=\emptyset$.
Indeed, by Hironaka's flattening, there exists a desingularization $g\colon X'_2\to X$
such that the induced Cartier divisor $D'$ on $X'=X_1\times X'_2$ admits a 
decomposition $D'=D''+\pi^*(D_2)$, where $D''$ is an effective Cartier divisor on $X'$
relative over $X'_2$, and $D_2$ is an effective Cartier divisor on $X'_2$ with normal
crossing support.

Choose a point $Q\in g^{-1}(P)$. Since $D''|_{X'_Q}\in |L_1|$ and $t\le \gamma(X_1,B_1;|L_1|)$,
the log variety $(X_1,B_1+tD''|_{X_1\times Q})$ has log canonical singularities. This is the fiber
at $Q$ of the family of log varieties $(X'/X'_2,p_1^*(B_1)+tD'')$, so by Lemma~\ref{ian},
the log variety $(X',p_1^*(B_1)+tD''+{\pi'}^*\Sigma_Q)$ has log canonical singularities
near $X'_Q$, where $\Sigma_Q$ is any local divisor cut out by a regular system of parameters
of $\cO_{X'_2,Q}$.

On the other hand, $(X,B+tD)$ has log canonical singularities at the generic point of $X_P$.
Therefore $(X',B'+tD')$ has log canonical singularities at the generic point of $X'_Q$.
Let $g^*(K_{X_2}+B_2)=K_{X'_2}+B'_2$. It follows that $(X'_2,B'_2+tD_2)$ has log canonical
singularities. Therefore $B'_2+tD'_2\le \Sigma_Q$ for some choice of local parameters at $Q$.
We deduce that $(X',p_1^*(B_1)+tD''+{\pi'}^*(B'_2+tD'_2))$ has log canonical singularities near
$X'_Q$. That is $(X',B'+tD')$ has log canonical singularities near $X'_Q$.
This holds for every $Q\in g^{-1}(P)$, so we deduce that $(X',B'+tD')$ has log canonical 
singularities over an open neighborhood of $g^{-1}(P)$. Since 
$(X',B'+tD')\to (X,B+tD)$ is log crepant, it follows that $(X,B+tD)$ has log canonical singularities
near $X_P$, that is $Z\cap X_P=\emptyset$.
\end{proof}


\subsection{Graded case}

Let $L$ be a $\Q$-Cartier divisor on $X$ such that $|nL|\ne \emptyset$ for some $n\ge 1$.
Let $x\in (X,B)_{lc}$ be a scheme point. The {\em $\alpha$-invariant at $x$ of $L$ with respect
to $(X,B)$} is defined as 
$$
\gamma_x(X,B;L)=\inf\{\lct_x(X,B;\frac{D_n}{n});n\ge 1,D_n\in |nL| \}
$$

\begin{question}\label{grsc}
Is $(X,B)_{lc}\ni x\mapsto \gamma_x(X,B;L)$ is lower semicontinuous, with finitely many rational
values?
\end{question}

Let $m\ge 1$ such that $mL$ is Cartier and $|mL|\ne \emptyset$. Then 
$\gamma_x(X,B;L)=\inf_{m\mid n} n\gamma_x(X,B;|nL|)$.

\begin{lem}\label{cga}
If $\dim X=1$, $\deg L>0$ and $x\in X$ is a closed point, then 
$
\gamma_x(X,B;L)=\frac{1-b_x}{\deg L}.
$
\end{lem}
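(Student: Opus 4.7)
The plan is to combine the explicit formula for log canonical thresholds on a smooth curve with Riemann--Roch to produce divisors in $|nL|$ concentrated as much as possible at $x$. Since $\dim X = 1$ and $x \in (X,B)_{lc}$, the curve $X$ is smooth at $x$ and $b_x \in [0,1]$. For any effective Cartier divisor $D$ on $X$, the pair $(X, B+tD)$ is lc at $x$ precisely when $b_x + t\cdot \mult_x(D) \le 1$, hence
$$
\lct_x(X, B; D) = \frac{1 - b_x}{\mult_x(D)},
$$
with the convention $1/0 = +\infty$. If $b_x = 1$ the claimed identity reads $\gamma_x(X,B;L) = 0$, which follows since $|nL|\ne \emptyset$ for some $n$ and any member $D_n$ meeting $x$ gives $\lct_x(X,B;D_n/n) = 0$; so I would assume $b_x < 1$ from now on.

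For the lower bound $\gamma_x(X,B;L) \ge (1-b_x)/\deg L$, I would observe that for any admissible $n$ and any $D_n \in |nL|$, the effective divisor $D_n$ has total degree $n \deg L$, so $\mult_x(D_n) \le n \deg L$. Therefore $\lct_x(X, B; D_n/n) = n(1-b_x)/\mult_x(D_n) \ge (1-b_x)/\deg L$, and the bound follows on taking the infimum over $n$ and $D_n \in |nL|$.

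For the matching upper bound, the plan is to apply Riemann--Roch on $X$ to produce divisors in $|nL|$ whose multiplicity at $x$ is asymptotically $n \deg L$. Fix $m \ge 1$ such that $mL$ is Cartier; then for $n \in m\Z_{>0}$ the divisor $nL$ is integral of degree $n\deg L \in \Z$. With $g$ the genus of $X$ and $k_n = n \deg L - g$, Serre duality yields $h^1(X, \cO_X(nL - k_n\cdot x)) = 0$ once $n$ is sufficiently large, and Riemann--Roch then gives $h^0(X, \cO_X(nL - k_n\cdot x)) \ge 1$. Hence there exists $D_n \in |nL|$ with $\mult_x(D_n) \ge n\deg L - g$, giving $\lct_x(X, B; D_n/n) \le n(1-b_x)/(n\deg L - g)$, which tends to $(1-b_x)/\deg L$ as $n \to \infty$ and closes the upper estimate.

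The only substantive step is the Riemann--Roch construction of the divisors $D_n$; notably, no exact linear equivalence between $n\deg L\cdot x$ and $nL$ is needed (the degree-zero class $L - \deg L \cdot x$ need not be torsion in $\Pic^0(X)$), so no Diophantine issue arises and an asymptotic approximation suffices. This is also the one place where the properness of $X$ --- implicit in the hypothesis that $\deg L$ is well-defined --- enters the argument.
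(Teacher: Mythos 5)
Your approach is essentially the paper's: use the degree bound $\mult_x(D_n)\le n\deg L$ for the lower estimate, and Riemann--Roch to produce members of $|nL|$ concentrated at $x$ up to genus defect for the upper one. However, the Serre duality step is wrong as stated. You set $k_n = n\deg L - g$, so $\deg(nL - k_n x) = g$ for all $n$ and hence $\deg\bigl(K_X - (nL - k_n x)\bigr) = g-2$; this is nonnegative for $g \ge 2$, and there is no reason for $h^1$ to vanish (e.g.\ $nL - k_n x$ could happen to be the canonical class on a genus-$2$ curve). Fortunately the vanishing is not needed: Riemann--Roch alone gives $h^0(nL - k_n x) = h^1(nL - k_n x) + \deg(nL - k_n x) - g + 1 = h^1 + 1 \ge 1$, which is exactly the paper's observation that every complete linear system of degree $g$ on a smooth projective curve is non-empty. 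Replace the Serre-duality sentence by this direct inequality and the proof is correct.
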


\begin{proof} We may suppose $b_x<1$. Let $mL$ be Cartier and $|mL|\ne \emptyset$.
Let $g$ be the genus of $C$. Recall that any complete linear system of degree $g$ is
non-empty. Let $m\mid n$ and $n\deg L>g$. Then $|nL-(n\deg L-g)x|\ne \emptyset$. Therefore
$$
n\frac{\deg L}{1-b_x}-\frac{g}{1-b_x}\le \mu_x(X,B;|nL|)\le n\frac{\deg L}{1-b_x}.
$$
Dividing by $n$ and letting $n\to \infty$, we obtain the claim.
\end{proof}

\begin{prop}[\cite{Kol97}, Theorem 6.7.1]\label{Sl}
Suppose $X^d$ is proper and $L$ is a nef and big $\Q$-divisor. Then 
$\gamma_P(X,B;L)\cdot \sqrt[d]{(L^d)}\le d$ for every $P\in (X,B)_{lc}$.
\end{prop}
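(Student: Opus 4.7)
The plan follows Kollár's original argument: use asymptotic Riemann--Roch to produce, for $n \to \infty$, divisors $D_n \in |nmL|$ vanishing to large order at $P$, then convert large order into small log canonical threshold.

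Fix $m \ge 1$ with $mL$ Cartier and a rational $\alpha \in (0, (L^d)^{1/d})$. Since $L$ is nef and big on the proper variety $X$, asymptotic Riemann--Roch gives
$$
h^0\!\bigl(X, \cO_X(nmL)\bigr) = \frac{(L^d)(nm)^d}{d!} + O(n^{d-1}).
$$
Set $k_n = \lfloor \alpha nm \rfloor$. The subspace of sections whose divisor has order $\ge k_n$ at $P$ has codimension at most $\dim_k(\cO_{X,P}/\mathfrak m_P^{k_n})$; at a smooth $P$ this equals $\binom{k_n+d-1}{d} = \frac{k_n^d}{d!} + O(k_n^{d-1})$. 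Since $\alpha^d < (L^d)$, the codimension is strictly less than $h^0$ for $n$ large, so there is $D_n \in |nmL|$ with $\ord_P D_n \ge k_n$.

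The classical blowup inequality at a smooth $P$ yields $\lct_P(X, B; D_n) \le d/\ord_P D_n \le d/k_n$: the exceptional divisor $E$ of $\operatorname{Bl}_P X$ satisfies $a_E(X, B + tD_n) = d - \mult_P B - t\,\ord_P D_n$, which must remain $\ge 0$. Hence
$$
\lct_P\!\left(X, B; \tfrac{D_n}{nm}\right) \le \frac{d\cdot nm}{k_n} \xrightarrow{n \to \infty} \frac{d}{\alpha},
$$
so $\gamma_P(X,B;L) \le d/\alpha$, and letting $\alpha \to (L^d)^{1/d}$ from below proves the claim.

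For singular $P$, apply the same strategy on a log resolution $\pi\colon Y \to X$ of $(X, B)$: $\pi^*L$ is nef and big with $((\pi^*L)^d) = (L^d)$, $H^0(Y, nm\pi^*L) = H^0(X, nmL)$ by the projection formula, and log crepancy $K_Y + B_Y = \pi^*(K_X + B)$ gives $\lct_P(X, B; D_n) \le \lct_Q(Y, B_Y; \pi^*D_n)$ for any closed $Q \in \pi^{-1}(P)$. The main obstacle is that $B_Y$ may have negative coefficients along exceptional divisors through $Q$, which would inflate the log discrepancy of $\operatorname{Bl}_Q Y$ with respect to $(Y, B_Y)$ beyond $d$; one handles this by choosing $Q$ so that $\sum_{Q \in E_i^Y} b_i^Y \ge 0$, using that the locus where this sum is negative does not cover all of $\pi^{-1}(P)$ for a suitably chosen resolution.
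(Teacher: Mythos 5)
Your argument at a smooth point $P \notin \Supp B$ is essentially the paper's: asymptotic Riemann--Roch, a dimension count producing $D_n \in |nL|$ with $\ord_P D_n > nc$, and the ordinary blowup at $P$ to bound $\lct_P$ by $d/\ord_P D_n$. The real divergence is in how the two handle a singular $P$ (or a $P$ lying on $\Supp B$), and here your proposal has a genuine gap.

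You pass to a log resolution $\pi\colon Y\to X$ and try to pick $Q\in\pi^{-1}(P)$ with $\mult_Q B_Y\ge 0$ so that the blowup of $Q$ still has log discrepancy at most $d$ with respect to $(Y,B_Y)$. But $B_Y=\pi^*(K_X+B)-K_Y$ has coefficient $1-a(E_i;X,B)$ along each exceptional $E_i$, and these are all strictly negative whenever $(X,B)$ is terminal over $P$. For example, let $X$ be the $3$-fold ordinary double point $x^2+y^2+z^2+w^2=0$ with $B=0$; the single blowup of the origin is a resolution with exceptional quadric $E$ of discrepancy $1$, i.e.\ $B_Y=-E$, and the whole fiber $\pi^{-1}(P)=E$ lies in the locus where $\mult_Q B_Y=-1<0$. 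No choice of $Q$ (and no finer resolution) gives $\mult_Q B_Y\ge 0$, so your final parenthetical claim --- that the bad locus ``does not cover all of $\pi^{-1}(P)$'' --- is false, and the $+\mult_Q B_Y$ correction gives a worse constant than $d$, losing the theorem.

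The paper sidesteps this entirely by semicontinuity. Having shown $\gamma_Q(X,B;|nL|)<d/(nc)$ for the dense set of smooth $Q\notin\Supp B$, it invokes Theorem~\ref{3.5}: $P\mapsto\gamma_P(X,B;|nL|)$ is lower semicontinuous with finitely many values, so the sublevel set $\{P:\gamma_P<d/(nc)\}$ is closed. A closed set containing a dense subset of the irreducible normal variety $X$ is all of $(X,B)_{lc}$, and the bound propagates to every $P$ without ever analyzing the resolution at a singular point. This is exactly the application the semicontinuity machinery of Section~2 was built for, and it is the ingredient your proposal is missing.
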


\begin{proof} We may scale $L$ and suppose it is Cartier. Let $0<c<\sqrt[d]{(L^d)}$ be 
a rational number. Since $h^0(nL)=(L^d)\frac{n^d}{d!}+O(n^{d-1})$ and 
$\binom{nc+d}{d}=c^d\frac{n^d}{d!}+O(n^{d-1})$, there exists an integer $n\ge 1$
such that $nc\in \Z$ and $h^0(nL)>\binom{nc+d}{d}$.

Let $Q\in X\setminus (\Sing X\cup \Supp B)$. The evaluation map $\Gamma(X,nL)\to \cO_Q/\fm_Q^{nc+1}$
has nontrivial kernel, by dimension count. Therefore there exists $D\in |nL|$ such that 
$\mult_Q(D)>nc$. Let $(X,B+\gamma D)$ be maximally log canonical at $Q$.
Let $v$ be the valuation induced by the exceptional divisor of the blow-up of $Q\in X$.
Then $0\le a(v;X,B+\gamma D)=d-\gamma \mult_v(D)<d-\gamma nc$. Therefore 
$\gamma<\frac{d}{nc}$. We conclude $\gamma_Q(X,B;|nL|)<\frac{d}{nc}$.

Since the points $Q$ are dense in $X$, Theorem~\ref{3.5} gives 
$\gamma_P(X,B;|nL|)<\frac{d}{nc}$ for every $P\in (X,B)_{lc}$.
Then $\gamma_P(X,B;L)\le n\gamma_P(X,B;|nL|)<\frac{d}{c}$ for every $P\in (X,B)_{lc}$.
Letting $c$ converge to $\sqrt[d]{(L^d)}$, we obtain the claim.
\end{proof}

If $X/k$ is proper and $(X,B)$ has log canonical singularities, 
define the {\em $\alpha$-invariant of $L$ with respect to $(X,B)$} as 
$$
\gamma(X,B;L)=\inf\{\lct(X,B;\frac{D_n}{n});n\ge 1,D_n\in |nL| \}.
$$

\begin{question}
Does $\mld(X,B)>0$ imply $\gamma(X,B;L)>0$?
\end{question}

For example, $\gamma(X,B;L)=\frac{\mld(X,B)}{\deg L}$ if $X$ is a curve and 
$L\not\sim_\Q 0$ (by Lemma~\ref{cga}).
If $X$ is smooth, $B=0$, and $A$ is a very ample divisor on $X$, the argument of~\cite[Corollary 5.11]{Vie95} 
shows that $\gamma(X,0;L)\ge \frac{1}{(L\cdot A^{d-1})}$. Theorem~\ref{pt} gives

\begin{cor}\label{cpt}
Let $(X_i/k,B_i)$ be finitely many proper log varieties, with log canonical singularities.
Let $L_i$ be $\Q$-Cartier divisors such that $|mL_i|\ne \emptyset$ for some $m\ge 1$.
Let $X=\prod_i X_i$, $B=\sum_i p_i^*(B_i)$, $L=\sum_i p_i^*(L_i)$.
Then the product log variety $(X,B)$ has log canonical singularities and 
$$
\gamma(X,B;L)=\min_i \gamma(X_i,B_i;L_i).
$$
\end{cor}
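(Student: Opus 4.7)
The plan is to derive Corollary~\ref{cpt} from Theorem~\ref{pt} applied to the finite-dimensional truncations $|nL_i|$, and then to pass to the graded $\alpha$-invariant via the scaling identity recorded just before Lemma~\ref{cga}. Induction on the number of factors trivially reduces to two factors, but that reduction is cosmetic and I would run the argument uniformly in the number of factors.

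First I would fix a single integer $m\ge 1$ such that $mL_i$ is Cartier and $|mL_i|\ne\emptyset$ for every $i$, taking a common multiple of integers that work individually for each factor. For every $n\ge 1$ with $m\mid n$, each $|nL_i|$ is a non-empty complete linear system on $X_i$, so Theorem~\ref{pt} applied to $(X,B)$ and to the Cartier divisor $nL=\sum_i p_i^*(nL_i)$ yields that the product log variety $(X,B)$ has log canonical singularities (which settles the first assertion of the corollary) and
$$
\gamma(X,B;|nL|)=\min_i \gamma(X_i,B_i;|nL_i|).
$$

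Next, I would apply the global form of the identity $\gamma(\bullet;L)=\inf_{m\mid n} n\,\gamma(\bullet;|nL|)$ (stated for scheme points just before Lemma~\ref{cga}, and valid verbatim for the global invariant with the same proof) both to $(X,B,L)$ and to each $(X_i,B_i,L_i)$, and substitute the displayed equality:
$$
\gamma(X,B;L)=\inf_{m\mid n} n\,\gamma(X,B;|nL|)=\inf_{m\mid n}\min_i n\,\gamma(X_i,B_i;|nL_i|).
$$
Since the minimum runs over the finite index set of factors, infimum and minimum commute, so the right hand side equals $\min_i \inf_{m\mid n} n\,\gamma(X_i,B_i;|nL_i|)=\min_i \gamma(X_i,B_i;L_i)$, which is the desired equality.

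There is no substantive obstacle: all the non-trivial content is already packaged in Theorem~\ref{pt}, and what remains is the choice of a common $m$ together with the elementary interchange of a countable infimum and a finite minimum. In particular, once Theorem~\ref{pt} is granted, the corollary is essentially a scaling argument and no new base-point or divisibility issue intervenes.
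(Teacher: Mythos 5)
Your proof is correct and takes exactly the route the paper intends: the paper states Corollary~\ref{cpt} with no displayed proof beyond the phrase ``Theorem~\ref{pt} gives,'' and your argument is the obvious unwinding of that remark. You correctly note that the reduction identity $\gamma(X,B;L)=\inf_{m\mid n}n\,\gamma(X,B;|nL|)$ (stated for $\gamma_x$ just before Lemma~\ref{cga}, and valid for the global invariant by the same scaling $\lct(X,B;mD)=\frac{1}{m}\lct(X,B;D)$), apply Theorem~\ref{pt} to each $n$ divisible by a common $m$, and commute the countable infimum with the finite minimum; nothing further is needed.
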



\subsection{The $\alpha$-invariant in terms of width}


Let $X$ be a normal variety. Let $\Lambda$ be a non-empty, finite dimensional linear 
system on $X$. For a prime divisor $E\subset X$, define the {\em width of $\Lambda$
at $E$} as 
$$
w_E(\Lambda)=\sup\{\mult_E(D);D \in \Lambda\}.
$$
It is a non-negative integer, zero if and only if $E$ is not a fixed component of $\Lambda$
and $\phi_\Lambda(E)=\phi_\Lambda(X)$. 
If $f\colon X'\to X$ is a proper modification and $E\subset X'$ is a prime divisor, define
the width of $\Lambda$ at $E$ as $w_E(f^*\Lambda)$. It depends only on the valuation
of $X$ defined by $E$ (called {\em geometric valuation of $X$}).

Let $L$ be a $\Q$-Cartier divisor such that $mL$ is Cartier and $|mL|\ne \emptyset$ for some $m\ge 1$.
Let $E$ be a geometric valuation of $X$. Define  the {\em width of $L$ at $E$} as 
$$
w_E(L)=\sup\{\frac{w_E(|nL|)}{n};m\mid n\}.
$$
If $X$ is proper, $w_E(L)=0$ for every geometric valuation $E$ of $X$ if and only if $L\sim_\Q 0$.
If $X$ is projective of dimension $d$, $A$ is a very ample divisor on $X$, and $E$ is a prime divisor 
on $X$, then $w_E(L)\le (L\cdot A^{d-1})$. It follows that if $X$ is proper, then $w_E(L)$ is a non-negative
real number, for every geometric valuation $E$ of $X$.
By definition, the following formulas hold:

- Let $(X,B)$ be a log variety, $\Lambda$ a non-empty finite dimensional linear system on $X$.
Let $E$ be a geometric valuation of $(X,B)_{lc}$. Then 
$$
\gamma_E(X,B;\Lambda) =
\left\{
\begin{array}{ll}
+\infty & , w_E(\Lambda)=0 \\
\frac{a(E;X,B)}{w_E(\Lambda)} & ,w_E(\Lambda)>0
\end{array} \right.
$$

- Let $(X,B)$ be a proper log variety, with log canonical singularities. Let $L$ be a $\Q$-Cartier 
divisor such that $mL$ is Cartier and $|mL|\ne \emptyset$ for some $m\ge 1$. Then 
$\gamma(X,B;L)$ is the infimum of $\gamma_E(X,B;L)$ after all geometric valuations $E$
of $X$. Equivalently,
$$
\gamma(X,B;L) =
\left\{
\begin{array}{ll}
+\infty & , L\sim_\Q 0 \\
\inf_{w_E(L)>0} \frac{a(E;X,B)}{w_E(L)} & ,L\not\sim_\Q 0
\end{array} \right.
$$


\section{The $\alpha$-invariant on toric varieties}


Let $X/k$ be a proper toric variety, let $B$ be an effective $\Q$-divisor which is 
torus invariant, such that $K_X+B$ is $\Q$-Cartier and $(X,B)$ has at most log
canonical singularities. Due to the existence of log resolutions in the toric category,
the latter condition is equivalent to $B=\sum_i b_i E_i$, where $E_i$ are the 
torus invariant prime divisors of $X$ and $b_i\in [0,1]\cap \Q$. We have 
$a(E_i;X,B)=1-b_i$.

Let $L=\sum_i l_i E_i$ be a torus invariant $\Q$-Cartier divisor. Recall that 
$
\Gamma(X,L)=\oplus_{m\in M\cap \square_L}k \cdot \chi^m,
$
where $\square_L=\cap_i\{m\in M_\R;\langle m,e_i\rangle+l_i\ge 0\}$ and $\{e_i\}=\Delta_X(1)$.

Let $V\subseteq \Gamma(X,L)$ be a non-zero, torus invariant $k$-vector subspace. 
There exists a finite set $A\subseteq M\cap \square_L$ such that $V=\oplus_{m\in A}k \cdot \chi^m$.
Let $\Lambda=\{(f)+L;f\in V\setminus 0\}$ be the corresponding linear system. 

\begin{lem} $w_{E_i}(\Lambda)$ is attained by a torus invariant member, computed by the formula:
$$
w_{E_i}(\Lambda)=\max_{m\in A}\langle m,e_i\rangle+l_i.
$$
\end{lem}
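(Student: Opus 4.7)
The plan is to compute $\mult_{E_i}$ on members of $\Lambda$ by pulling functions back along the one-parameter subgroup $\lambda_{e_i}\colon \mathbb{G}_m\to T$ determined by the primitive lattice vector $e_i$, and to show that the bound is saturated by a single-character member, which is automatically torus invariant.

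First, I would record that for a character $\chi^{m_0}$ with $m_0\in A$, the divisor $(\chi^{m_0})+L=\sum_j(\langle m_0,e_j\rangle+l_j)E_j$ is torus invariant, with multiplicity $\langle m_0,e_i\rangle+l_i$ at $E_i$ (and effective, since $m_0\in A\subseteq \square_L$). In particular, choosing $m_0\in A$ that maximizes $\langle m,e_i\rangle$ gives a torus invariant member of $\Lambda$ realizing the value $\max_{m\in A}\langle m,e_i\rangle+l_i$. So $w_{E_i}(\Lambda)$ is at least the asserted quantity, and this member is torus invariant.

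Next, I would show the reverse inequality: for every nonzero $f=\sum_{m\in A}c_m\chi^m\in V$, one has $v_{E_i}(f)\le \max\{\langle m,e_i\rangle:m\in A\}$. For this, use the fact that $v_{E_i}$ is computed as the order of vanishing at $0$ after pulling back along $\lambda_{e_i}$. Since $\chi^m\circ\lambda_{e_i}(t)=t^{\langle m,e_i\rangle}$,
\[
f\circ\lambda_{e_i}(t)=\sum_{m\in A}c_m\, t^{\langle m,e_i\rangle},
\]
which is a Laurent polynomial in $t$ all of whose (nonzero) terms have exponent at most $M_0:=\max_{m\in A}\langle m,e_i\rangle$. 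Hence its $t$-adic valuation at $0$ is $\le M_0$, giving $v_{E_i}(f)\le M_0$ and thus $\mult_{E_i}((f)+L)\le M_0+l_i$. Combining with Step 1 yields both the formula and the fact that the supremum is attained by a torus invariant member.

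The subtle point, and the only real obstacle, is the upper bound in Step 2: the naive valuation inequality gives $v_{E_i}(f)\ge \min\{\langle m,e_i\rangle:c_m\ne 0\}$ rather than an upper bound, and cancellation can occur among characters with the same value of $\langle\cdot,e_i\rangle$. The one-parameter-subgroup computation bypasses this by converting the question into the elementary observation that a Laurent polynomial of top exponent $M_0$ vanishes at $0$ to order at most $M_0$, independently of any cancellation in the coefficient of $t^{M_0}$.
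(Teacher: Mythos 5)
Your Step 1 is fine and gives the correct lower bound. But Step 2 has a genuine gap: the asserted identity $v_{E_i}(f)=\operatorname{ord}_{t=0}(f\circ\lambda_{e_i})$ is not correct in general, and the claim that $f\circ\lambda_{e_i}(t)=\sum_{m\in A}c_m\,t^{\langle m,e_i\rangle}$ is a nonzero Laurent polynomial can fail. Cancellation among characters with the same pairing against $e_i$ can make the pull-back vanish identically: if $m_1,m_2\in A$ satisfy $\langle m_1,e_i\rangle=\langle m_2,e_i\rangle$ and $f=\chi^{m_1}-\chi^{m_2}$, then $f\circ\lambda_{e_i}\equiv 0$, so $\operatorname{ord}_{t=0}(f\circ\lambda_{e_i})=+\infty$ and the proposed bound evaporates, while $v_{E_i}(f)$ is finite. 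More generally, the one-parameter subgroup through the identity lands at a \emph{particular} point of the open orbit of $E_i$, so one only has $\operatorname{ord}_{t=0}(f\circ\lambda_{e_i})\ge v_{E_i}(f)$, with equality precisely when the residue of $f$ along $E_i$ does not vanish at that limit point; this is the useless direction for the upper bound you want. So the ``fact'' you invoke does not hold, and the 1-PS route as written does not close the argument.

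The missing ingredient is that $v_{E_i}$ is a \emph{monomial} valuation: $v_{E_i}\bigl(\sum_m c_m\chi^m\bigr)=\min\{\langle m,e_i\rangle : c_m\ne 0\}$ with equality, not merely the ultrametric $\ge$. (Equality holds because characters with a common value of $\langle\cdot,e_i\rangle$ restrict to linearly independent functions on the open orbit of $E_i$; equivalently, translate $\lambda_{e_i}$ by a generic $g\in T$ to force the residue to be nonzero at the limit.) Once you have this, the upper bound is immediate, since $\min\le\max_{m\in A}\langle m,e_i\rangle$. The paper sidesteps all of this with a short representation-theoretic argument: for each $t\ge 0$ the set $\{f\in V:\mult_{E_i}((f)+L)\ge t\}$ is a torus-invariant linear subspace of $V$ (as $E_i$ is torus-invariant), hence is a sum of weight spaces and, if nonzero, contains some $\chi^m$ with $m\in A$. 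This shows both the maximum and the minimum of $\mult_{E_i}$ over $\Lambda$ are attained at invariant members, without ever computing $v_{E_i}(f)$ for a general $f$. You should either cite the monomial-valuation identity or adopt the invariant-subspace argument.
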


\begin{proof}
Let $t\ge 0$. The set $\{f\in V;\mult_{E_i}((f)+L)\ge t\}$ is a torus invariant vector subspace
of $V$. So it is non-zero if and only if it contains $\chi^m$ for some $m\in A$. It follows that 
the maximal (also minimal) value among $\mult_{E_i}(D)\ (D\in \Lambda)$ is attained within 
the subset $\mult_{E_i}((\chi^m)+L)=\langle m,e_i\rangle+l_i \ (m\in A)$.
\end{proof}

We have $w_{E_i}(\Lambda)=0$ if and only if $A$ is contained in the hyperplane
$\langle \cdot ,e_i\rangle+l_i=0$. It follows that we can compute $\gamma_x(X,B;\Lambda)$
for every torus invariant codimension one point $x\in X$. Indeed, $x$ is the generic point of
some $E_i$, and $\gamma_{E_i}(X,B;\Lambda)$ is $+\infty$ if $w_{E_i}(\Lambda)=0$,
and $\frac{1-b_i}{\max_{m\in A}\langle m,e_i\rangle+l_i}$ otherwise. Proposition~\ref{mS}
states that only these valuations determine $\gamma(X,B;\Lambda)$.

\begin{prop}\label{mS} 
$
\gamma(X,B;\Lambda)=\min_i \gamma_{E_i}(X,B;\Lambda).
$
In particular, $\gamma(X,B;\Lambda)$ is attained by a torus invariant member of $\Lambda$.
\end{prop}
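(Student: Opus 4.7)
The plan is to prove both inequalities. The easy direction $\gamma(X,B;\Lambda)\le \min_i \gamma_{E_i}(X,B;\Lambda)$ falls out of the valuative formula for $\gamma$ recalled just before the statement, applied to the geometric valuations $E_i$. The substance is the reverse inequality, and my strategy is to exploit the torus action on $\Lambda$ to degenerate any $D\in\Lambda$ to a torus invariant member $D_m=(\chi^m)+L$, and then compute $\lct(X,B;D_m)$ directly from the combinatorial data via piecewise linearity on the fan.

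For the degeneration, let $T$ denote the big torus of $X$. Since $B$ is $T$-invariant, each $t\in T$ is an automorphism of $(X,B)$, so $\lct(X,B;t\cdot D)=\lct(X,B;D)$ for every $D\in\Lambda$. The induced $T$-action on $\Lambda=\bP(V)$ is linear with weights $A$, so its fixed points are precisely the classes $[\chi^m]\ (m\in A)$, corresponding to the torus invariant members $D_m=(\chi^m)+L$. Borel's fixed point theorem, applied to the proper $T$-scheme $\overline{T\cdot D}\subseteq\Lambda$, then yields some $D_{m_0}$ in the orbit closure of $D$. Invoking the lower semicontinuity of $\Lambda\ni D'\mapsto \lct_X(X,B;D')$ (the theorem for $Z=X$ proved earlier in this section) along a one-parameter family in $\overline{T\cdot D}$ that degenerates $D$ to $D_{m_0}$, I conclude
$$
\lct(X,B;D_{m_0})\le \liminf_{t\cdot D\to D_{m_0}} \lct(X,B;t\cdot D)=\lct(X,B;D).
$$

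For the toric computation, I would pass to a smooth refinement of the fan of $X$. There, both $a(E_v;X,B)$ (piecewise linear with value $1-b_i$ on $e_i$) and $\mult_{E_v}(D_m)=\langle m,v\rangle+\psi_L(v)$ are piecewise linear in $v\in N_\R$ and linear on each cone. Hence for $v=\sum_j c_j e_{i_j}$ in a cone, the log canonicity condition $a(E_v)\ge t\cdot\mult_{E_v}(D_m)$ follows by nonnegative linearity from its validity at each generator $e_{i_j}$, and so
$$
\lct(X,B;D_m)=\min_i\frac{1-b_i}{\langle m,e_i\rangle+l_i}\ge \min_i\frac{1-b_i}{w_{E_i}(\Lambda)}=\min_i\gamma_{E_i}(X,B;\Lambda),
$$
the inequality using the pointwise bound $\langle m,e_i\rangle+l_i\le w_{E_i}(\Lambda)$ from the preceding lemma. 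Chaining with the degeneration step and taking $\inf_{D\in\Lambda}$ gives the reverse inequality. For the \emph{in particular} clause, I would pick $i_0$ realizing $\min_i\gamma_{E_i}(X,B;\Lambda)$ and $m_0\in A$ with $\langle m_0,e_{i_0}\rangle+l_{i_0}=w_{E_{i_0}}(\Lambda)$; the $i=i_0$ term then realizes the minimum in the displayed formula for $\lct(X,B;D_{m_0})$, while the $i\ne i_0$ terms are $\ge \gamma_{E_i}(X,B;\Lambda)\ge \gamma(X,B;\Lambda)$, so $D_{m_0}$ is a torus invariant member attaining $\gamma(X,B;\Lambda)$. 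The main obstacle I anticipate is the clean justification of the degeneration step: one must combine the $T$-invariance of $\lct$ along each orbit, the lsc of $\lct$ on $\Lambda$ (from the earlier theorem with $Z=X$), and Borel's theorem to guarantee a torus invariant limit, and verify that none of these ingredients requires further hypotheses beyond the torus invariance of $B$ and log canonicity of $(X,B)$.
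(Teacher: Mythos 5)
Your proof is correct, but it takes a genuinely different route from the paper's. The paper proves the reverse inequality $\gamma\ge \min_i\gamma_{E_i}$ by a chain of reductions: it first passes to a small $\Q$-factorial toric modification, then localizes to an affine chart $U_\sigma$ with $\sigma$ simplicial, pulls back along the finite toric cover $\bA^d_k\to U_\sigma$ (\'etale in codimension one), and finally invokes Lemma~\ref{pc}, which bounds $\lct$ of $(\bA^d,\sum b_iH_i+tD)$ in terms of the partial degrees $\deg_{z_i}$; that lemma is itself deduced from the product theorem (Theorem~\ref{pt}). Your argument stays global on $X$: you use the $T$-action on $\Lambda$, Borel's fixed-point theorem on the orbit closure $\overline{T\cdot D}$, and the lower semicontinuity of $D\mapsto\lct(X,B;D)$ (the unnamed theorem with $Z=X$, proved just before Theorem~\ref{pt} via the universal divisor) to degenerate any member to a torus-invariant one, and then compute $\lct(X,B;D_m)$ by the standard piecewise-linear toric criterion. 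Both routes ultimately rest on the family framework of Section~1, but through different corollaries of it; yours is shorter and makes the ``in particular'' clause immediate, while the paper's detour produces Lemma~\ref{pc} as a reusable byproduct. One small point of bookkeeping in your toric computation: passing to a smooth refinement is not actually needed to conclude $a(E_v;X,B+tD_m)\ge 0$ for all toric valuations $v$ from its validity at the $e_i\in\Delta(1)$; since $K_X+B+tD_m$ is $\Q$-Cartier, the log-discrepancy function is already linear on each cone of the original fan, and any $v\in\sigma$ is a nonnegative combination of $\sigma(1)$, so the conclusion follows directly (the refinement is only a convenience for seeing this). Also, the terms with $\langle m,e_i\rangle+l_i=0$ should be dropped from (or set to $+\infty$ in) the displayed minimum; this is a routine degenerate case but worth noting.
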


\begin{proof} The inequality $\le$ is clear. For the converse, let $t\le \min_i \gamma_{E_i}(X,B;\Lambda)$
and $D\in \Lambda$. We have to show that $(X,B+tD)$ has log canonical singularities.

Let $\mu\colon X'\to X$ be a toric birational modification which is an isomorphism in codimension one,
and such that $X'$ is $\Q$-factorial. The toric varieties $X,X'$ have the same invariant prime divisors,
and therefore we may pullback our data to $X'$. Therefore we may suppose $X$ is $\Q$-factorial.

The conclusion is local on $X$, so we may shrink $X$ to a torus invariant affine open neighborhood 
$U$ of a fixed point. Thus $U=T_N\emb(\sigma)$, where $\sigma$ is a simplicial cone in $N_\R$
which generates $N_\R$. Let $N'$ be the lattice generated by the primitive vectors $e_i\in N$ which 
generate the extremal rays of $\sigma$. The inclusion $N'\subseteq N$ induces a toric morphism
$\tau\colon U'\to U$ which is finite, and \'etale in codimension one. We have $(U',U'\setminus T)\simeq
(\bA^d_k,\sum_{j=1}^dH_j)$, where $H_i$ are the standard hyperplanes of the affine space.
We have $\tau^*(K_X+B+tD|_U)=K_{\bA^d_k}+\sum_{j=1}^d b_j H_j+tD'$. We may suppose
$\lfloor \tau^*L\rfloor=0$, and therefore $({\bA^d_k},\sum_{j=1}^d b_j H_j+tD'=\sum_{j=1}^d b_j H_j+t\{D'\}+
t\lfloor D'\rfloor)$ has log canonical singularities by Lemma~\ref{pc}. Therefore $(X,B+tD)$ has log 
canonical singularities on $U$.
\end{proof}

\begin{lem}\label{pc}
Let $H_i$ be the standard hyperplanes of the affine space $\bA^d_k$. Let $b_i\in [0,1]$, $t>0$, 
and $0\ne P\in k[z_1,\ldots,z_d]$ such that $\deg_{z_i}(P) \le \frac{1-b_i}{t}$ for every $1\le i\le d$.
Let $D$ be the divisor of zeros of $P$. Then $({\bA^d_k},\sum_{i=1}^d b_i H_i+tD)$ has log canonical
singularities.
\end{lem}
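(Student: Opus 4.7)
My plan is to compactify $\bA^d$ inside $\bar Y:=(\bP^1_k)^d$, extend the pair to a toric setting with an appropriately chosen boundary at infinity, and then use an equivariant degeneration on $\bar Y$ to reduce the lc verification to a torus-invariant case, exploiting the openness theorem of Section~1. Let $d_i:=\deg_{z_i}(P)$, so that $td_i\le 1-b_i$ by hypothesis. Denote the toric prime divisors of $\bar Y$ by $H_1,\ldots,H_d$ (closures of the given hyperplanes) and $H_1^\infty,\ldots,H_d^\infty$ (the complementary divisors at infinity). Set $b_i^\infty:=1-td_i\in[0,1]$ and $\bar B:=\sum_i b_i H_i+\sum_i b_i^\infty H_i^\infty$. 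Multi-homogenize $P$ to a section $\tilde P\in H^0(\bar Y,\cO(d_1,\ldots,d_d))$ and let $\bar D:=\dv(\tilde P)$. Since $\bar B|_{\bA^d}=B$, $\bar D|_{\bA^d}=D$, and log canonicity is local, it suffices to prove that $(\bar Y,\bar B+t\bar D)$ is log canonical.

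I would then set up an equivariant degeneration as follows. Pick integer weights $w=(w_1,\ldots,w_d)$ in general position, and form the one-parameter subgroup $\lambda\colon\mathbb{G}_m\to T=(\mathbb{G}_m)^d$. Set $w_{\min}:=\min\{w\cdot\alpha:c_\alpha\ne 0\}$, attained uniquely at a vertex $\alpha^*$ of the Newton polytope of $P$. The rescaled equation
\[
\tilde P_s:=s^{-w_{\min}}\lambda(s)^*\tilde P=\sum_\alpha c_\alpha\,s^{w\cdot\alpha-w_{\min}}\prod_i x_i^{\alpha_i}y_i^{d_i-\alpha_i}
\]
is a polynomial in $s$ with coefficients bi-homogeneous of multi-degree $(d_1,\ldots,d_d)$, nonvanishing on any fiber. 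Its zero locus is a relative Cartier divisor $\bar\cD\subset\bar Y\times\bA^1$ over $\bA^1_s$, with fiber $\bar D_1=\bar D$ at $s=1$, fiber $\bar D_0=\sum_i\alpha^*_i H_i+\sum_i(d_i-\alpha^*_i)H_i^\infty$ at $s=0$, and $\bar D_s=\lambda(s)\bar D$ at any $s\in\mathbb{G}_m$. The data $(\bar Y\times\bA^1,\pi^*\bar B+t\bar\cD)\to\bA^1$ is then a family of log varieties in the sense of Section~1.

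The fiber over $s=0$ is the torus-invariant pair $(\bar Y,\sum_i(b_i+t\alpha^*_i)H_i+\sum_i(1-t\alpha^*_i)H_i^\infty)$. Each coefficient lies in $[0,1]$: using $\alpha^*_i\le d_i$ one has $b_i+t\alpha^*_i\le b_i+td_i\le 1$, while $0\le 1-t\alpha^*_i\le 1$ since $0\le\alpha^*_i\le d_i\le 1/t$. As the toric components form an snc divisor on the smooth toric variety $\bar Y$, this special fiber is lc. By Theorem~\ref{OpenLc} the non-lc locus $Z\subseteq\bar Y\times\bA^1$ is closed; since $\bar Y$ is proper, $\pi(Z)\subseteq\bA^1$ is closed, and the previous step gives $0\notin\pi(Z)$. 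Because $\lambda$ preserves $\bar B$ and identifies $\bar D_s$ with $\bar D_{s'}$ for $s,s'\in\mathbb{G}_m$, the set $\pi(Z)$ is $\mathbb{G}_m$-invariant under scaling on $\bA^1$. The only $\mathbb{G}_m$-invariant closed subsets of $\bA^1$ are $\emptyset$, $\{0\}$, and $\bA^1$, so $\pi(Z)=\emptyset$. Therefore the fiber at $s=1$, namely $(\bar Y,\bar B+t\bar D)$, is lc, and restricting to $\bA^d$ concludes the proof.

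The main obstacle is to arrange the degeneration carefully: the rescaled equation $\tilde P_s$ must define a relative Cartier divisor across $s=0$ (so that $(\bar Y\times\bA^1,\pi^*\bar B+t\bar\cD)\to\bA^1$ is genuinely a family of log varieties), and the compactifying coefficients $b_i^\infty=1-td_i$ must be chosen to keep the toric boundary in the lc range while compensating for the degree of $\bar D$ along the infinity divisors. Once these are set up correctly, the conclusion is forced by combining Theorem~\ref{OpenLc} with the properness of $\bar Y$ and the $\mathbb{G}_m$-equivariance of the generic fibers.
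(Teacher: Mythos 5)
Your proof is correct, but it takes a genuinely different route from the paper's. The paper's argument is a short application of the product theorem (Theorem~\ref{pt}): set $w_i=\deg_{z_i}(P)$, note that $\gamma(\bP^1,b_i\cdot 0;|\cO(w_i)|)=\frac{1-b_i}{w_i}\ge t$, deduce $\gamma\bigl(\prod_i(\bP^1,b_i\cdot 0);|\cO(w_1,\ldots,w_d)|\bigr)\ge t$ from Theorem~\ref{pt}, observe that the multi-homogenization of $P$ lies in $|\cO(w_1,\ldots,w_d)|$, and restrict from $(\bP^1)^d$ to $\bA^d$. Your argument instead avoids Theorem~\ref{pt} altogether: you compactify in $(\bP^1)^d$ with a boundary at infinity calibrated to $b_i^\infty=1-td_i$, run a one-parameter-subgroup degeneration of $\bar D$ to a torus-invariant monomial divisor (flat limit supported on the toric boundary), check that the special fiber has snc boundary with coefficients in $[0,1]$ and hence is lc, and then invoke Theorem~\ref{OpenLc}, properness of $(\bP^1)^d$, and $\mathbb{G}_m$-equivariance of the generic fibers to spread the lc property from $s=0$ to $s=1$. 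The arithmetic is right: $b_i+t\alpha^*_i\le b_i+td_i\le 1$ and $0\le 1-t\alpha^*_i\le 1$, and $\bar\cD$ is relative over $\bA^1$ because the limit monomial $c_{\alpha^*}\prod_i x_i^{\alpha^*_i}y_i^{d_i-\alpha^*_i}$ is nonzero.

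The trade-off: the paper's proof is essentially four lines once Theorem~\ref{pt} is established, while yours is longer but self-contained given only Theorem~\ref{OpenLc}, and it exhibits an explicit lc degeneration (a pleasant toric "degenerate to the torus-fixed case" viewpoint, in the spirit of the Newton-polytope computation used later for Proposition~\ref{mS}). Both arguments ultimately rest on the same inversion-of-adjunction machinery underlying Section~1. One small caveat that applies equally to both proofs: the statement allows $b_i$ and $t$ real, whereas the family-of-log-varieties formalism (and the notion of log variety in Theorem~\ref{pt}) is set up for $\Q$-coefficients; for irrational data one should either invoke the "real" variants the paper alludes to in Proposition~\ref{LctVar}, or perturb the coefficients upward and pass to the limit.
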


\begin{proof} Denote $w_i=\deg_{z_i}(P)$. Consider the product of log varieties $\prod_{i=1}^d(\bP^1,b_i\cdot 0)$. 
We have $\gamma(\bP^1,b_i\cdot 0;|\cO(w_i)|)=\frac{1-b_i}{w_i}\ge t$.
By Theorem~\ref{pt}, $\gamma(\prod_{i=1}^d(\bP^1,b_i\cdot 0); |\cO(w_1,\ldots,w_d)|)\ge t$.

Now $P$ defines a divisor $D'\in |\cO(w_1,\ldots,w_d)|$. Therefore $(\prod_i \bP^1,\boxtimes_i b_i\cdot 0+tD')$
has log canonical singularities. After restricting to the complement of $\boxtimes_i \infty$, we obtain that 
$({\bA^d_k},\sum_{i=1}^d b_i H_i+tD)$ has log canonical singularities.
\end{proof}

Suppose now that $|nL|\ne \emptyset$ for some $n\ge 1$, that is $\square_L\ne \emptyset$.
The width of $L$ in $E_i$ is computed by the formula 
$$
w_{E_i}(L)=\max_{m\in \square_L}\langle m,e_i\rangle+l_i.
$$ 
Let $r\ge 1$ be the smallest integer such that the extremal points of $r\square_L$ belong to the lattice
$M$ (i.e. $\Gamma(rL)\otimes \Gamma(nL)\to \Gamma((r+n)L)$ is surjective for $n\gg 0$). Then 
$w_{E_i}(L)=\frac{w_{E_i}(|rL|)}{r}$.

The width of $L$ in $E_i$ is zero if and only if $\square_L$ is contained in the hyperplane 
$\langle \cdot,e_i\rangle+l_i=0$. In this case, $\gamma_{E_i}(X,B;L)=+\infty$. If $w_{E_i}(L)>0$, then 
$$
\gamma_{E_i}(X,B;L)=\frac{1-b_i}{w_{E_i}(L)}.
$$

Proposition~\ref{mS} for the complete linear systems $|nL|$ gives

\begin{thm}\label{mSa} $\gamma(X,B;L)=\min_i \gamma_{E_i}(X,B;L)$. In particular, 
$\gamma(X,B;L)$ is attained by some invariant member $(\chi^m)+L \ (m\in M_\Q\cap \square_L)$.
\end{thm}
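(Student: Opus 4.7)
The plan is to apply Proposition~\ref{mS} to the complete linear systems $|nL|$ and pass to the asymptotic limit, as the sentence immediately before the statement suggests. Fix $m \ge 1$ with $mL$ Cartier and $|mL| \ne \emptyset$. From the scaling $\lct(X,B; D/n) = n \cdot \lct(X,B; D)$, the definition of $\gamma(X,B;L)$ becomes
$$
\gamma(X,B;L) = \inf_{m \mid n} n \cdot \gamma(X,B;|nL|),
$$
and analogously $\gamma_{E_i}(X,B;L) = \inf_{m \mid n} n \cdot \gamma_{E_i}(X,B;|nL|)$ for each torus invariant prime $E_i$.

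Since $|nL|$ is torus invariant, Proposition~\ref{mS} gives $\gamma(X,B;|nL|) = \min_i \gamma_{E_i}(X,B;|nL|)$ for every such $n$. The index set $\{i\}$ being finite, the infimum in $n$ commutes with $\min_i$, yielding
$$
\gamma(X,B;L) = \inf_n n \min_i \gamma_{E_i}(X,B;|nL|) = \min_i \inf_n n \cdot \gamma_{E_i}(X,B;|nL|) = \min_i \gamma_{E_i}(X,B;L).
$$

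For the attainment claim, assume $L \not\sim_\Q 0$ (otherwise the statement is trivial) and pick an index $i_0$ realizing the minimum with $w_{E_{i_0}}(L) > 0$. Since $X$ is proper and $L$ is $\Q$-Cartier, $\square_L$ is a bounded rational polytope, so $\langle \cdot, e_{i_0}\rangle$ attains its maximum on $\square_L$ at some rational vertex $m_0 \in M_\Q \cap \square_L$. Then $D_0 := (\chi^{m_0}) + L$, interpreted as the $\Q$-divisor $\tfrac{1}{r}((\chi^{rm_0}) + rL)$ for any $r \ge 1$ with $rm_0 \in M$ and $rL$ Cartier, is an effective invariant $\Q$-divisor with $D_0 \sim_\Q L$ and $\mult_{E_{i_0}}(D_0) = w_{E_{i_0}}(L)$. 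From the log discrepancy at $E_{i_0}$ one gets $\lct(X,B;D_0) \le (1 - b_{i_0})/w_{E_{i_0}}(L) = \gamma(X,B;L)$, while $\gamma(X,B;L) \le \lct(X,B;D_0)$ is immediate from the definition of $\gamma$ as an infimum; hence equality.

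All the real work is hidden inside Proposition~\ref{mS}, so the present argument is pure bookkeeping. The only two steps to watch are the commutation of $\inf_n$ with $\min_i$, which relies on finiteness of the set of torus invariant primes, and the existence of a rational maximizer $m_0 \in \square_L$, which comes from $\square_L$ being a rational polytope.
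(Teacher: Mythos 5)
Your proof is correct and follows the paper's approach exactly: the paper's own proof is just the one-liner ``Proposition~\ref{mS} for the complete linear systems $|nL|$ gives the theorem,'' and you have filled in precisely the bookkeeping this leaves implicit (the identity $\gamma(X,B;L)=\inf_{m\mid n} n\gamma(X,B;|nL|)$ already noted in the paper, the commutation of $\inf_n$ with $\min_i$, and the rational maximizer $m_0$ furnishing the torus-invariant member that attains the infimum).
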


Recall the the {\em stable fixed multiplicity} of $L$ in $E_i$ is defined as 
$$
f_{E_i}(L)=\inf\{\frac{\mult_{E_i}(D_n)}{n};n\ge 1,D_n\in |nL|\}.
$$
In our toric setting, it has the following combinatorial formula:
$$
f_{E_i}(L)=\min_{m\in \square_L}\langle m,e_i\rangle+l_i.
$$ 
It is zero if and only if $E_i$ is not fixed by $|nL|$ for some $n\ge 1$. More precisely, 
$r f_{E_i}(L)=\mult_{E_i}\Fix(|rL|)$, where $r$ is defined above.

Recall that the {\em width} of a convex set $\square\subset M_\R$ along a direction $e\in N_\R\setminus 0$ 
is defined as $w(\square;e)=\sup\{\langle m',e\rangle-\langle m,e\rangle;m',m\in \square\}$. We obtain the 
identity:
$$
w(\square_L;e_i)=w_{E_i}(L)-f_{E_i}(L).
$$

\begin{cor}\label{g0}
Let $(X,B)$ be a proper log variety with log canonical singularities. Let $L$
be a torus invariant $\Q$-Cartier divisor such that the linear system $|nL|$ is 
mobile for some $n\ge 1$. Then 
$$
\gamma(X,B;L)=\sup\{t\ge 0;t(\square_L-\square_L)\subseteq \square_{-K_X-B}\}.
$$
\end{cor}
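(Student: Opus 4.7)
The proof is a direct combinatorial translation of Theorem~\ref{mSa} using the width identity $w(\square_L;e_i)=w_{E_i}(L)-f_{E_i}(L)$ and the mobility hypothesis. I plan to proceed in four short steps.

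First, I would recall from Theorem~\ref{mSa} that
$$
\gamma(X,B;L)=\min_i \gamma_{E_i}(X,B;L),
$$
where the minimum runs over the torus invariant prime divisors $E_i$. By the formula immediately preceding Theorem~\ref{mSa}, $\gamma_{E_i}(X,B;L)=\frac{1-b_i}{w_{E_i}(L)}$ when $w_{E_i}(L)>0$, and $+\infty$ otherwise.

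Second, I would use the mobility assumption. Since $|nL|$ has no fixed component for some $n\ge 1$, there is a member $D_n\in|nL|$ with $\mult_{E_i}(D_n)=0$ for every $i$, which forces the stable fixed multiplicity $f_{E_i}(L)=0$. Then the identity $w(\square_L;e_i)=w_{E_i}(L)-f_{E_i}(L)$ recalled just before the corollary yields $w_{E_i}(L)=w(\square_L;e_i)$ for all $i$.

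Third, I would spell out $\square_{-K_X-B}$ combinatorially. Since $K_X=-\sum_i E_i$ toric-canonically, we have $-K_X-B=\sum_i(1-b_i)E_i$, so
$$
\square_{-K_X-B}=\{m\in M_\R:\langle m,e_i\rangle\ge -(1-b_i)\text{ for all }i\}.
$$
The set $\square_L-\square_L$ is symmetric under negation, so an inclusion $t(\square_L-\square_L)\subseteq \square_{-K_X-B}$ is equivalent to $t\langle v,e_i\rangle\le 1-b_i$ for every $v\in\square_L-\square_L$ and every $i$. Taking the supremum over $v$, this is equivalent to $t\cdot w(\square_L;e_i)\le 1-b_i$ for every $i$ (with no constraint when $w(\square_L;e_i)=0$).

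Fourth, I combine: the sup on the right hand side of the desired formula equals $\min_i \frac{1-b_i}{w(\square_L;e_i)}$ with the convention $\frac{1-b_i}{0}=+\infty$, which by Step 2 coincides with $\min_i \frac{1-b_i}{w_{E_i}(L)}=\gamma(X,B;L)$ by Step 1. There is no real obstacle; the only point meriting care is the symmetry observation in Step 3, which is needed so that the two-sided inequalities $\pm t\langle v,e_i\rangle\le 1-b_i$ collapse to the one-sided bound involving the width, and the correct handling of the degenerate case $w_{E_i}(L)=0$ (which corresponds to $\square_L$ lying in the hyperplane $\langle\cdot,e_i\rangle+l_i=0$, and in particular to $L\sim_\Q 0$ exactly when this holds for every $i$, in which case both sides are $+\infty$).
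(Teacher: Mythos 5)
Your proof is correct and follows essentially the same route as the paper: use the mobility hypothesis to get $f_{E_i}(L)=0$, hence $w(\square_L;e_i)=w_{E_i}(L)$, then unwind the inclusion $t(\square_L-\square_L)\subseteq\square_{-K_X-B}$ into the width inequalities $t\cdot w(\square_L;e_i)\le 1-b_i$, and conclude via Theorem~\ref{mSa}. One tiny imprecision in Step~2: ``no fixed component'' does not by itself produce a \emph{single} $D_n$ avoiding every $E_i$ simultaneously --- a priori only a possibly different member for each $i$ --- but for each fixed $i$ this already forces $f_{E_i}(L)=0$, which is all you use, so the argument stands.
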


\begin{proof} We have $t(\square_L-\square_L)\subseteq \square_{-K_X-B}$ if and 
only if $t(\square_L-m)\subseteq \square_{-K_X-B}$ for every $m\in \square_L$.
Since $-K_X-B=\sum_i (1-b_i)E_i$, this is equivalent to 
$t\langle m'-m,e_i\rangle +1-b_i\ge 0$ for every invariant prime divisor $E_i\subset X$,
and $m,m'\in \square_L$.
Equivalently, $t\cdot w(\square_L;e_i)\le 1-b_i$ for every $i$.
By assumption, $f_{E_i}(L)=0$ for every $i$. That is $w(\square_L;e_i)=w_{E_i}(L)$.
The condition becomes $t\cdot w_{E_i}(L)\le 1-b_i$ for every $i$, that is 
$t\le \gamma(X,B;L)$, by Theorem~\ref{mSa}.
\end{proof}

\begin{lem}
Let $(X,B)$ be a proper toric log variety, with log canonical singularities. 
Let $L$ be a torus invariant $\Q$-Cartier divisor such that $|nL|\ne \emptyset$ for some $n\ge 1$. 
\begin{itemize}
\item[a)] $X\ni P\mapsto \gamma_P(X,B;L)$ is lower semicontinuous, and takes only finitely
many values.
\item[b)] Let $P\in X\setminus T$ be a closed point outside the torus. Let $O$ be the 
generic point of the unique torus orbit which contains $P$. Then 
$\gamma_P(X,B;L)=\gamma_O(X,B;L)$. 
\end{itemize}
\end{lem}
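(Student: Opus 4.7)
For part (a), the plan is to combine torus equivariance with specialization monotonicity. Since $L$, $B$, and $X$ are all $T$-invariant, for any $t\in T$ the pull-back $D_n\mapsto t^*D_n$ is an automorphism of $|nL|$ satisfying $\lct_{tP}(X,B;D_n/n)=\lct_P(X,B;t^*D_n/n)$, so $\gamma_{tP}(X,B;L)=\gamma_P(X,B;L)$. Hence the function is constant on $T$-orbits, and finiteness of its image follows from the finitely many orbits. For lower semicontinuity, observe that if $y\in\overline{\{x\}}$, then every geometric valuation whose center contains $\overline{\{x\}}\supseteq\overline{\{y\}}$ also has center through $y$; so $\lct_y\le\lct_x$, and taking infimum gives $\gamma_y(X,B;L)\le\gamma_x(X,B;L)$. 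Each sub-level set $\{\gamma_\cdot\le t\}$ is thus stable under specialization, i.e., Zariski closed.

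For part (b), the plan is to establish a local analogue of Theorem~\ref{mSa}: for every scheme point $x\in X$ lying in the orbit $O_\tau$,
\[
\gamma_x(X,B;|nL|)=\min_{e_i\in\tau(1)}\gamma_{E_i}(X,B;|nL|).
\]
The inequality $\le$ follows from specialization as in (a). For the reverse, given $t\le\min_{e_i\in\tau(1)}\gamma_{E_i}(X,B;|nL|)$ and $D\in|nL|$, the strategy is to adapt the reduction used in the proof of Proposition~\ref{mS}. After a small $\Q$-factorialization (an isomorphism in codimension one, so preserving invariant primes and their $\gamma_{E_i}$), restrict to the invariant affine open $U_\tau\ni x$, whose invariant primes are exactly the $E_i$ with $e_i\in\tau(1)$. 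Completing the primitive generators $e_1,\dots,e_k\in\tau(1)$ to a $\Q$-basis of $N_\Q$, the associated finite toric cover produces $U'_\tau\simeq\bA^k\times(k^*)^{d-k}\to U_\tau$, with pulled-back boundary $\sum_{i=1}^kb_iH_i$. Viewing the projection $U'_\tau\to(k^*)^{d-k}$ as a family of log varieties and invoking Lemma~\ref{ian}, log canonicity on $U'_\tau$ at a point above $x$ reduces to log canonicity of a fiber $(\bA^k,\sum b_iH_i+tD'|_{\mathrm{fib}})$; the hypothesis $t\le\gamma_{E_i}(X,B;|nL|)$ yields the degree bound $\deg_{z_i}(D'|_{\mathrm{fib}})\le w_{E_i}(|nL|)\le(1-b_i)/t$, placing us in the scope of Lemma~\ref{pc}.

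With the local formula in hand, part (b) is immediate: the sets $\{e_i\in\tau(1)\}$ attached to the closed point $P\in O_\tau$ and to its generic point $O$ coincide, so $\gamma_P(X,B;|nL|)=\gamma_O(X,B;|nL|)$; passing to the infimum over $n\ge 1$ then yields $\gamma_P(X,B;L)=\gamma_O(X,B;L)$.

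The main technical obstacle I foresee is the extension of the proof of Proposition~\ref{mS} to a non-top-dimensional cone $\tau$: the torus factor in $U_\tau$ prevents a direct application of Lemma~\ref{pc} on an affine space, and dealing with it requires the family-theoretic slicing via Lemma~\ref{ian}, together with careful normalization of $D$ and verification of the degree bound on its restricted pullback.
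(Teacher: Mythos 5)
Part (a) of your proposal is sound and takes a route different from the paper's. The paper applies Theorem~\ref{OpenLc} to the universal divisor of $|nL|$ to see that each $Z(t,n)=\bigcup_{D\in|nL|}(X,B+\tfrac{t}{n}D)_{-\infty}$ is closed, and then notes that these torus-invariant closed subsets range over a finite set, so $\bigcup_nZ(t,n)$ is closed. You instead use $T$-equivariance plus specialization monotonicity of $\gamma$, which is arguably more elementary. One point should be made explicit: specialization-stability alone does not give closedness in a Noetherian scheme; you must couple it with the $T$-invariance you established (a $T$-invariant, specialization-stable subset is a union of orbit closures, hence closed since there are finitely many orbits).

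For part (b) you miss the short argument and the alternative you propose has a real gap. The paper's proof is immediate from (a): if $t\le\gamma_O(X,B;L)$ then $O\notin Z(t,n)$ for any $n$; since each $Z(t,n)$ is torus-invariant, missing the generic point of the orbit forces it to miss the whole orbit, so $P\notin Z(t,n)$ and $\gamma_P\ge t$. Your proposal instead tries to prove a local version of Theorem~\ref{mSa} via Lemma~\ref{ian}, which requires $(U'_\tau,\sum_ib_iH_i+tD')\to T'$ to be a family of log varieties; but the pullback $D'$ need not be relative over $T'$. Concretely, if $m\neq m'$ lie in $M\cap\square_{nL}$ with $\langle m-m',e_i\rangle=0$ for all $e_i\in\tau(1)$, then the section $\chi^m-\chi^{m'}$ of $\cO(nL)$ pulls back to a function with a factor $\chi^{m-m'}-1$ depending only on the torus-factor coordinates, so $D'$ contains entire fibers $\bA^k\times\{s\}$. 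Without handling such members separately (or passing to a generic fiber and invoking semicontinuity, which essentially re-derives the closed-set argument of (a)), Lemma~\ref{ian} cannot be applied. The asserted bound $\deg_{z_i}(D'|_{\mathrm{fib}})\le w_{E_i}(|nL|)$ also needs justification: it concerns the Newton polytope of the section under a suitable trivialization of $nL$ on $U_\tau$, and is not a direct restatement of $w_{E_i}(|nL|)\le(1-b_i)/t$. So the local formula you state is plausible, but your proof of it is incomplete, and the much simpler deduction of (b) from (a) was available.
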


\begin{proof}
a) Fix $t>0$. Then $\{P\in X;\gamma_P(X,B;L)<t\}=\cup_{n\ge 1}Z(t,n)$, where 
$Z(t,n)=\cup_{D_n\in |nL|}(X,B+\frac{t}{n}D_n)_{-\infty}$. Each $Z(t,n)$ is torus invariant.
And is closed by Theorem~\ref{OpenLc} applied to the universal divisor of $|nL|$.
Since $X$ contains only finitely many closed torus invariant subsets,
$Z(t,n)$ belong to a finite set. Therefore $\cup_n Z(t,n)$ is closed in $X$. We conclude
that $P\mapsto \gamma_P(X,B;L)$ is lower semicontinuous. The function is constant
on the torus orbits, and since $X$ has only finitely many orbits, we conclude that the
function takes only finitely many values.

b) The inequality $\le $ is clear. For the converse, let $t\le \gamma_O(X,B;L)$.
Then $Z(t,n)$ is a closed torus invariant subset of $X$ which does not contain
the generic point of the orbit $O$. Therefore $Z(t,n)$ is disjoint from $O$.
Therefore $t\le \gamma_P(X,B;L)$.
\end{proof}


\section{Hensley type diophantine approximation}



\subsection{Geometry of numbers~\cite{Lek69}}

Let $V\simeq \R^d$ be a finite dimensional $\R$-vector space. Let $V^*$ be the dual vector space.
The {\em dual} of a non-empty convex set $\square\subseteq V$ is defined as 
$$
\square^*=\{v^* \in V^*;\langle v^*,v \rangle+1\ge 0\ \forall v \in \square\}.
$$
It is closed, convex subset of $V^*$, containing the origin. The Duality Theorem states that 
if $\square$ is compact convex and contains the origin in its interior, then $(\square^*)^*=\square$.

Let $\square\subset V$ be a compact convex set. Let $P\in \square$ be a point. Denote
$$
\gamma(P\in \square)=\sup\{t\ge 0;P+t(\square-\square)\subseteq \square\}.
$$
It is a well defined non-negative real number, zero if and only if $P$ does not belong to the 
relative interior of $\square$. We can think of $\gamma(\cdot \in \square)$ as a distance
function to the boundary of $\square$.

Suppose $\dim\square=\dim V$ and $P$ is an interior point 
of $\square$. For each $v\in V\setminus 0$, there exist $l^+,l^->0$ such that 
$P+l^+v,P-l^-v\in \partial \square$. The supremum of the ratio $l^+/l^-$, after directions
$v\in V\setminus 0$, is called the {\em coefficient of asymmetry of $\square$ about $P$}, 
denoted $c(P\in \square)$. We have 
$$
\gamma(P\in \square)=\frac{1}{1+c(P\in \square)}.
$$ 
By definition, $c(P\in \square)\ge 1$. Therefore $\gamma(P\in \square)\le \frac{1}{2}$,
and equality holds if and only if $\square$ is symmetric about $P$. 

If $0$ is an interior point of a compact convex set $\square$, then so is $0\in \square^*$,
and $\gamma(0\in \square)=\gamma(0\in \square^*)$.

Suppose $\square$ is a compact polytope and $P$ is an interior point. To compute
$c(P\in \square)$ it suffices to consider the directions $v$ such that $P+\R v$ contains
some vertex of $\square$. In particular, the supremum in the definition of $c(P\in \square)$
is a maximum. And if both $P$ and $\square$ are rational with respect to some lattice
$\Lambda$ with $\Lambda\otimes_\Z \R=V$, then $c(P\in \square)\in \Q$.
Fixing a line which attains the maximum and passes through a vertex of $\square$,
we can apply Caratheodory's Theorem to the other boundary point, and obtain the following statement
(called the {\em simplex trick}): there exists a simplex $S$ (with $\dim S\le \dim V$), with vertices among those of 
$\square$, containing $P$ in its relative interior, and with $c(P\in \square)\le c(P\in S)$.

If $S$ is a simplex and $P$ has barycentric coordinates $(\gamma_i)_i$ with respect to the 
vertices of $S$, then $\gamma(P\in S)=\min_i\gamma_i$.

Van der Corput's theorem gives 

\begin{thm}\label{M1T} Let $\square\subset \R^d$ be a compact convex set, containing the origin 
in the interior. Then 
$$
|\Z^d\cap \Int(\square)|\ge \gamma(0\in \square)^d\vol_{\Z^d}(\square).
$$
\end{thm}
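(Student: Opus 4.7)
The strategy is to reduce the asymmetric statement to the classical symmetric lattice-point count, via the Minkowski difference $\square-\square$, and then appeal to Blichfeldt's theorem (the form in which Van der Corput's theorem is really being used here). Write $\gamma:=\gamma(0\in\square)$, which is positive since $0\in\Int(\square)$ and $\square$ is compact. By the very definition of $\gamma$, the centrally symmetric body $\gamma(\square-\square)$ is contained in $\square$.

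Since the conclusion counts lattice points in the \emph{open} set $\Int(\square)$, I would introduce a small perturbation: for $\epsilon\in(0,1)$, set $K_\epsilon:=(1-\epsilon)\gamma\,\square$. Then
\[
K_\epsilon-K_\epsilon=(1-\epsilon)\gamma(\square-\square)\subseteq(1-\epsilon)\,\square,
\]
and a direct convexity argument using a small ball around the origin inside $\square$ shows that $(1-\epsilon)\,\square\subseteq\Int(\square)$; hence $K_\epsilon-K_\epsilon\subseteq\Int(\square)$.

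Next I would apply Blichfeldt's theorem to $K_\epsilon$: whenever $m$ is a non-negative integer with $m<\vol(K_\epsilon)$, there exist $m+1$ points $k_0,\dots,k_m\in K_\epsilon$ whose pairwise differences all lie in $\Z^d$. The $m+1$ distinct vectors $k_i-k_0$ are then lattice points contained in $K_\epsilon-K_\epsilon\subseteq\Int(\square)$, so $|\Z^d\cap\Int(\square)|\ge m+1$. Taking $m$ to be the largest such integer gives
\[
|\Z^d\cap\Int(\square)|\ge\vol(K_\epsilon)=(1-\epsilon)^d\gamma^d\vol(\square),
\]
and letting $\epsilon\to 0$ yields the theorem.

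The essential input is Blichfeldt's theorem; everything else is packaging the defining inclusion $\gamma(\square-\square)\subseteq\square$ together with the $\epsilon$-perturbation needed to land in $\Int(\square)$ rather than merely in $\square$. The only mildly subtle point is thus the passage from closed to open containment; once that is handled the argument is just counting. (An alternative route would apply Van der Corput directly to the symmetric body $(1-\epsilon)\gamma(\square-\square)$, combined with the Brunn--Minkowski inequality $\vol(\square-\square)\ge 2^d\vol(\square)$, which yields the same bound in one step.)
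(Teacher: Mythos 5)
Your proof is correct, and it is essentially the argument the paper has in mind: the paper states this as a consequence of Van der Corput's theorem without proof, and your derivation via Blichfeldt applied to $(1-\epsilon)\gamma\,\square$ (so that the differences land in $(1-\epsilon)\gamma(\square-\square)\subseteq(1-\epsilon)\square\subseteq\Int(\square)$) is precisely the standard proof of the asymmetric form of Van der Corput's theorem. The $\epsilon$-dilation to handle the open interior and the limit $\epsilon\to 0$ are the right way to close that gap.
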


The original Minkowski's first theorem asserts that if $\square$ is symmetric about the origin, 
that is $\gamma(0\in \square)=\frac{1}{2}$,  then $\{0\}\subsetneq \Z^d\cap \Int(\square)$ if
$\vol_{\Z^d}(\square)>2^d$.


\subsection{Diophantine approximation}

For positive integers $p,q$, define integers $u_{p,q}$ recursively as follows: $u_{1,q}=q$, 
$u_{p+1,q}=u_{p,q}(1+u_{p,q})$. The following properties hold:
\begin{itemize}
\item $q\mid u_{p,q}$ and $\gcd(1+u_{p,q},1+u_{p',q})=1$ for $p\ne p'$.
\item $\sum_{i=1}^p\frac{1}{1+u_{i,q}}=\frac{1}{q}-\frac{1}{u_{p+1,q}}$. In particular,
$\sum_{i=1}^\infty\frac{1}{1+u_{i,q}}=\frac{1}{q}$.
\item $\prod_{i=1}^p(1+u_{i,q})=\frac{u_{p+1,q}}{q}$.
\item $\prod_{i=1}^p \frac{1}{1+u_{i,q}}=1-q\sum_{i=1}^p\frac{1}{1+u_{i,q}}$.
\end{itemize}

Note that $(1+u_{p,1})_{p\ge 1}=(2,3,7,43,\ldots)$ is called the Sylvester sequence
in the literature. And $u_{p,q}$ can be expressed as a polynomial in $q$, with 
leading term $q^{2^{p-1}}$.

\begin{lem}\cite{Sou05}\label{S0} 
Let $d$ be a positive integer. Let $x_1\ge \cdots\ge x_d>0$
and $y_1\ge \cdots\ge y_d>0$ be real numbers such that $\prod_{i=1}^lx_i\ge 
\prod_{i=1}^ly_i$ for every $1\le l\le d$. Then $\sum_{i=1}^dx_i\ge 
\sum_{i=1}^dy_i$, and equality holds if and only if $x_i=y_i$ for every $i$.
\end{lem}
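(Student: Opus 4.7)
The plan is to reduce the inequality to the case $\prod_{i=1}^d x_i = \prod_{i=1}^d y_i$, where it becomes a classical instance of Karamata's majorization inequality applied to the exponential function.

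First I would normalize. If $\prod_{i=1}^d x_i > \prod_{i=1}^d y_i$, set $c = \prod y_i / \prod x_i \in (0,1)$ and replace $x_d$ by $\tilde x_d = c x_d$, keeping $\tilde x_i = x_i$ for $i<d$. Since $c \le 1$, the modified sequence is still positive and non-increasing; the partial product hypotheses for $l<d$ are unchanged, while $\prod_{i=1}^d \tilde x_i = c\prod_{i=1}^d x_i = \prod_{i=1}^d y_i$. Moreover $\sum \tilde x_i = \sum x_i - (1-c)x_d \le \sum x_i$, so it suffices to prove $\sum \tilde x_i \ge \sum y_i$. In this way I may assume $\prod_{i=1}^d x_i = \prod_{i=1}^d y_i$ from the start.

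Taking logarithms, the sequences $(\log x_i)$ and $(\log y_i)$ are both non-increasing, satisfy $\sum_{i=1}^l \log x_i \ge \sum_{i=1}^l \log y_i$ for every $l \le d$, with equality at $l = d$. This is exactly the classical majorization relation $(\log x_i) \succ (\log y_i)$. Applying Karamata's inequality to the strictly convex, strictly increasing function $\varphi(t) = e^t$ yields $\sum_i e^{\log x_i} \ge \sum_i e^{\log y_i}$, which is the desired $\sum x_i \ge \sum y_i$.

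For the equality clause, if $\sum x_i = \sum y_i$, then the reduction step must be vacuous: one has $\sum x_i \ge \sum \tilde x_i \ge \sum y_i = \sum x_i$, forcing $(1-c)x_d = 0$ and thus $c = 1$, i.e., the original products were already equal. Equality in Karamata's inequality for the strictly convex $\exp$ then forces $\log x_i = \log y_i$ for every $i$, hence $x_i = y_i$ for all $i$. The only delicate point in this plan is this equality clause of Karamata; I would invoke it as a standard fact (it is immediate from the Abel-summation proof of Karamata, since strict convexity of $\varphi$ makes every nontrivial two-point transfer strict). Everything else --- the scaling reduction, checking hypotheses, and translating into classical majorization --- is routine.
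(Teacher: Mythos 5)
Your proof is correct. The paper itself does not reproduce an argument for this lemma---it is cited directly from Soundararajan \cite{Sou05}---so there is no in-paper proof to compare against, but your line of reasoning is a clean and standard route.

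A few remarks on the details. The normalization step is fine: replacing $x_d$ by $\tilde x_d = c x_d$ with $c = \prod_i y_i / \prod_i x_i \in (0,1]$ preserves positivity and monotonicity (since $\tilde x_d \le x_d \le x_{d-1}$), leaves the partial products with $l < d$ untouched, makes the full products equal, and only decreases the sum, so it suffices to treat the case $\prod x_i = \prod y_i$. After taking logarithms the hypotheses become precisely the majorization $(\log x_i) \succ (\log y_i)$, and Karamata's inequality with the strictly convex $\varphi(t)=e^t$ gives $\sum x_i \ge \sum y_i$. For the equality statement, you correctly observe that $\sum x_i = \sum y_i$ squeezes the chain $\sum x_i \ge \sum \tilde x_i \ge \sum y_i$ into equalities, forcing $c=1$ (as $x_d>0$), and then the strict-convexity equality case of Karamata forces $\log x_i = \log y_i$ for all $i$; the converse direction is trivial. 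The one thing worth stating explicitly, which you flag yourself, is the equality clause of Karamata for strictly convex $\varphi$: any nontrivial majorization relation forces a strict inequality. That is indeed a standard consequence of the Abel-summation (or $T$-transform) proof, so invoking it as a known fact is reasonable. In short, the reduction, the passage to additive majorization, and the equality analysis are all sound; nothing is missing.
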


\begin{lem}[cf. \cite{Sou05}]\label{SL} Let $d,q$ be positive integers.
Let $x_1\ge \ldots\ge x_d>0$ be real numbers such that 
$\prod_{i=1}^l x_i\le 1-q\sum_{i=1}^l x_i$ for every $1\le l\le d$. Then 
$$
\sum_{i=1}^dx_i\le \sum_{i=1}^d\frac{1}{1+u_{i,q}},
$$
and equality holds if and only if $x_i=\frac{1}{1+u_{i,q}}$ for every $i$.
\end{lem}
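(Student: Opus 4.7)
My plan is to prove Lemma \ref{SL} by induction on $d$. The base case $d = 1$ is immediate: the hypothesis at $l = 1$ reads $x_1 \le 1 - q x_1$, giving $x_1 \le 1/(1+q) = 1/(1+u_{1,q})$, with equality iff the bound is attained. For the inductive step, I would assume the lemma for dimension $d-1$ with every positive integer parameter, set $y_i = 1/(1+u_{i,q})$, and let $x = (x_1,\ldots,x_d)$ satisfy the hypothesis. A crucial combinatorial identity, immediate by induction on $j$ from the recursion $u_{p+1,q} = u_{p,q}(1+u_{p,q})$, is $u_{j,\,q(1+q)} = u_{j+1,q}$ for every $j \ge 1$, and by iterating, $u_{j, u_{r,q}} = u_{r+j-1,q}$ for all $r,j \ge 1$.

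The extremal case $x_1 = y_1 = 1/(1+q)$ reduces cleanly to the induction hypothesis. Indeed, setting $z_j = x_{j+1}$ for $j = 1,\ldots,d-1$ and using $q y_1 = 1 - y_1$, the hypothesis at $l = 2,\ldots,d$ simplifies to
\[
\prod_{j=1}^{l-1} z_j + q(1+q)\sum_{j=1}^{l-1} z_j \le 1,
\]
which is precisely the hypothesis of the lemma for $z$ in dimension $d-1$ with parameter $q' = q(1+q)$. Applying the induction hypothesis and the shift identity yields $\sum z \le \sum_{j=1}^{d-1} 1/(1+u_{j,q'}) = \sum_{i=2}^d y_i$, hence $\sum x = y_1 + \sum z \le \sum y$, with equality iff $z_j = 1/(1+u_{j,q'})$ for every $j$, i.e.\ iff $x_i = y_i$ for every $i$.

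The remaining case $x_1 < y_1$ is the technical heart. Here my plan is to argue by contradiction via Lemma \ref{S0}. Suppose $\sum x > \sum y$. Applying Lemma \ref{S0} with the roles of $y$ and $x$ reversed, the bound $\sum y \ge \sum x$ would follow from $\prod_{i=1}^l y_i \ge \prod_{i=1}^l x_i$ holding for every $l$, so this inequality must fail for some $l$; take $l^*$ minimal. The $l=1$ hypothesis forces $l^* \ge 2$, and by minimality $x_{l^*} > y_{l^*}$. The hypothesis $\prod_{i=1}^{l^*} x_i + q\sum_{i=1}^{l^*} x_i \le 1 = \prod_{i=1}^{l^*} y_i + q\sum_{i=1}^{l^*} y_i$ combined with the product excess at $l^*$ forces $\sum_{i=1}^{l^*} x_i < \sum_{i=1}^{l^*} y_i$, so the assumed global excess propagates to the tail: $\sum_{i=l^*+1}^d x_i > \sum_{i=l^*+1}^d y_i$. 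The main obstacle is converting this tail excess into a contradiction. After normalizing by $1 - q\sum_{i=1}^{l^*} x_i > 0$, the tail $(x_{l^*+1},\ldots,x_d)$ satisfies a non-standard inequality of the shape $\alpha \prod + \tilde q \sum \le 1$ with $\alpha \le 1$ and, crucially, $\tilde q < u_{l^*+1,q}$. In the tight subcase $\alpha = 1$, the inductive hypothesis combined with the iterated shift identity $u_{j, u_{l^*+1,q}} = u_{l^*+j,q}$ immediately yields $\sum_{i=l^*+1}^d x_i \le \sum_{i=l^*+1}^d y_i$, contradicting the tail excess. In the loose subcase $\alpha < 1$ the tail constraint is weaker, and my intention is to iterate the head-tail decomposition on the tail (exploiting the monotonicity $x_{l^*+1} \le x_{l^*} \le y_1$ at each stage), producing a descending chain of break points that must terminate in the tight subcase within $d - l^*$ steps, yielding the final contradiction. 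Rigorously executing this iteration and controlling the non-standard coefficient $\alpha$ throughout is where I expect the principal difficulty to lie.
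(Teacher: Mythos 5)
Your overall strategy---induction on $d$ with the parameter $q$ allowed to vary, split into the boundary case $x_1=y_1$ and the open case $x_1<y_1$---is a genuinely different route from the paper's proof. The paper does not induct on $d$: it fixes $y_i=1/(1+u_{i,q})$, lets $k$ be the largest index such that $\prod_{i\le j}x_i\le\prod_{i\le j}y_i$ for all $j\le k$, closes the case $k=d$ by a single application of Lemma~\ref{S0}, and argues that $k<d$ is impossible via a second application of Lemma~\ref{S0} to the suffix products $\prod_{i=j}^{k+1}$. Your treatment of the boundary case $x_1=y_1$ is correct and clean: the shift identity $u_{j,q(1+q)}=u_{j+1,q}$ makes the reduction to dimension $d-1$ with parameter $q'=q(1+q)$ immediate, and it delivers the equality characterization as well.

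The case $x_1<y_1$, however, has a gap that is more serious than you acknowledge. After normalizing, the tail $(x_{l^*+1},\dots,x_d)$ satisfies $\alpha\prod+\tilde q\sum\le1$ with $\alpha=\prod_{i\le l^*}x_i/(1-q\sum_{i\le l^*}x_i)\le1$ and $\tilde q=q/(1-q\sum_{i\le l^*}x_i)$, and you correctly note $\tilde q<u_{l^*+1,q}$ (strict, because $\sum_{i\le l^*}x_i<\sum_{i\le l^*}y_i$). But this strict inequality means that even the \emph{tight} subcase $\alpha=1$ does not reduce to the inductive hypothesis: to invoke the statement in dimension $d-l^*$ with parameter $u_{l^*+1,q}$ you would need the \emph{stronger} constraint $\prod+u_{l^*+1,q}\sum\le1$, whereas you only have the weaker $\prod+\tilde q\sum\le1$, and $\tilde q$ is moreover a real number while the inductive statement requires an integer parameter. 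Applying the hypothesis with the smaller integer $\lfloor\tilde q\rfloor$ yields an upper bound on the tail sum that is strictly \emph{larger} than $\sum_{i>l^*}y_i$, so no contradiction with the tail excess results. The iteration you sketch for $\alpha<1$ inherits the same degradation at each step, and there is no a priori reason for it to terminate in a configuration where the inductive hypothesis applies. As written, the argument does not prove the lemma; making it work would require either a strengthened inductive statement that tolerates the prefactor $\alpha$ and the non-integer coefficient $\tilde q$, or a fundamentally different bound on the tail.
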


\begin{proof}  
Denote $y_i=\frac{1}{1+u_{i,q}}$. We have $x_1\le 1-qx_1$, that is $x_1\le y_1$.
Therefore there exists $1\le k\le d$ maximal such that 
$\prod_{i=1}^l x_i\le \prod_{i=1}^l y_i$ for every $1\le l \le k$.

Suppose $k=d$. Lemma~\ref{S0} gives $\sum_{i=1}^dx_i\le \sum_{i=1}^dy_i$.
And equality holds if and only if $x_i=y_i$ for every $i$.

Suppose $k<d$. Then $\prod_{i=1}^{k+1} x_i> \prod_{i=1}^{k+1} y_i$. It follows 
that $\prod_{i=j}^{k+1} x_i> \prod_{i=j}^{k+1} y_i$ for $1\le j\le k+1$. 
Lemma~\ref{S0} gives $\sum_{i=1}^{k+1} x_i>\sum_{i=1}^{k+1}y_i$. Therefore 
$$
\prod_{i=1}^{k+1} x_i\le 1-q\sum_{i=1}^{k+1} x_i<1-q\sum_{i=1}^{k+1} y_i=
\prod_{i=1}^{k+1} y_i.
$$
Contradiction.
\end{proof}

\begin{lem}\label{dc}
For indeterminates $T_1,\ldots,T_d$, the following formula holds:
\begin{equation*}
\det \left(
  \begin{array}{ccccc}
   1+T_1 & 1  &  & 1\\
      1     & 1+T_2   &    & 1 \\
      \vdots   &  &  \ddots &   \vdots \\
   1 &   \ldots & 1 & 1+T_d
  \end{array} \right)
  = (1+\sum_{i=1}^d\frac{1}{T_i})\prod_{i=1}^dT_i.
\end{equation*}
\end{lem}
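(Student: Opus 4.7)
The plan is to recognize the matrix in question as a rank-one perturbation of a diagonal matrix and to apply the matrix determinant lemma. Write $M = D + \mathbf{1}\mathbf{1}^\top$, where $D=\operatorname{diag}(T_1,\ldots,T_d)$ is the diagonal matrix of indeterminates and $\mathbf{1}=(1,\ldots,1)^\top$ is the all-ones column vector. Both sides of the asserted identity are polynomials in $T_1,\ldots,T_d$---indeed the right-hand side expands to $\prod_i T_i + \sum_j\prod_{i\neq j}T_i$---so it suffices to prove the equality inside the rational function field $k(T_1,\ldots,T_d)$, where each $T_i$ is invertible.

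Working in that field, I would factor $M = D\,(I+D^{-1}\mathbf{1}\mathbf{1}^\top)$ and use multiplicativity of the determinant to reduce the problem to computing $\det(I+D^{-1}\mathbf{1}\mathbf{1}^\top)$. The classical rank-one identity $\det(I+uv^\top)=1+v^\top u$, applied with $u=D^{-1}\mathbf{1}=(1/T_1,\ldots,1/T_d)^\top$ and $v=\mathbf{1}$, gives $\det(I+D^{-1}\mathbf{1}\mathbf{1}^\top) = 1+\sum_i 1/T_i$. Combining with $\det D=\prod_i T_i$ yields the claimed formula.

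There is no substantive obstacle here; the only delicate point is the passage to the rational function field in order to invert $D$, which is harmless because the resulting identity is polynomial. As a completely elementary alternative, one may subtract row $1$ from every subsequent row so that row $k\geq 2$ becomes $-T_1 e_1 + T_k e_k$ (with $e_i$ the standard basis vectors), and then expand by multilinearity in these $d-1$ rows. Any term that selects $-T_1 e_1$ in two or more rows vanishes by linear dependence, so only $d$ nonzero contributions survive: the one in which every row $k\geq 2$ chooses $T_k e_k$ (giving $(1+T_1)\prod_{k\geq 2}T_k$, since the remaining matrix is upper triangular), and, for each $k_0\geq 2$, the one in which row $k_0$ chooses $-T_1 e_1$ and every other row $k\geq 2$ chooses $T_k e_k$ (giving $T_1\prod_{k\geq 2,\,k\neq k_0}T_k$ by a cofactor expansion along the columns $\neq 1,k_0$). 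Summing these contributions produces $\prod_i T_i + \sum_j\prod_{i\neq j}T_i$, which is the expanded form of the right-hand side.
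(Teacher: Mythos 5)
Your proof is correct, and both of your arguments differ from the paper's. The paper proceeds by induction on $d$: it views the determinant as an affine function $c_1 T_1 + c_0$ of $T_1$, extracts $c_0 = \prod_{i\ge 2}T_i$ by setting $T_1=0$, identifies $c_1$ with the $(d-1)\times(d-1)$ determinant in $T_2,\ldots,T_d$ via a row-subtraction trick, and then invokes the inductive hypothesis. Your primary argument instead recognizes the matrix as the rank-one perturbation $D + \mathbf{1}\mathbf{1}^{\top}$ of $\operatorname{diag}(T_1,\ldots,T_d)$ and applies the matrix determinant lemma over the rational function field, which is shorter and more conceptual, at the cost of requiring a (harmless) passage to $k(T_1,\ldots,T_d)$ to invert $D$; the paper's induction stays purely in the polynomial ring. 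Your alternative row-reduction-plus-multilinearity argument is also sound and is in fact closest in spirit to the paper's manipulation, but it expands in one pass rather than recursing, avoiding the induction entirely.
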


\begin{proof} We use induction on $d$. The case $d=1$ is clear.
Let $d\ge 2$. The determinant is of the form $c_1T_1+c_0$, where $c_0,c_1$
are polynomials in $T_2,\ldots,T_d$. The constant term is obtained by setting
$T_1=0$, and we compute $c_0=\prod_{i=2}^dT_i$. The other term is the 
difference 
\begin{equation*}
\det \left(
  \begin{array}{ccccc}
   1+T_1 & 1  &  & 1\\
      1     & 1+T_2   &    & 1 \\
      \vdots   &  &  \ddots &   \vdots \\
   1 &   \ldots & 1 & 1+T_d
  \end{array} \right)
  -
  \det \left(
  \begin{array}{ccccc}
   T_1 & 1  &  & 1\\
      1     & 1+T_2   &    & 1 \\
      \vdots   &  &  \ddots &   \vdots \\
   1 &   \ldots & 1 & 1+T_d
  \end{array} \right)
\end{equation*}
We compute the determinants using the formula by permutations $\sigma$ of $\{1,\ldots,d\}$.
If $\sigma(1)\ne 1$, the corresponding difference is zero. Therefore $c_1=(1+T_1)\det_{d-1}-T_1\det_{d-1}=\det_{d-1}$. By induction,
$c_1=\sum_{i=2}^d\prod_{j\ne i,1}T_j+\prod_{i=2}^dT_i$. 
Then the determinant is $\sum_{i=2}^d\prod_{j\ne i}T_j+\prod_{i=1}^dT_i+\prod_{j\ne 1}T_j$,
so the desired identity holds for $d$. 
\end{proof}

\begin{lem}\label{15}
Let $x_1,\ldots,x_d>0$ and $c_1,\ldots,c_d\ge 1$ such that $1-\prod_{i=1}^dx_i<\sum_{i=1}^dc_ix_i<1$. 
Then there exists $z\in \N^d\setminus 0$ such that 
$
\frac{z_j}{1+\sum_i c_iz_i}<x_j
$ 
for all $j$.
\end{lem}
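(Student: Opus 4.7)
The plan is to reformulate the conclusion as a matrix inequality and produce the integer solution by lattice-point methods on the resulting polytope.

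First, I would rewrite the condition $z_j<x_j(1+\sum_ic_iz_i)$ componentwise as $Mz<x$, where $M=I-xc^T$ has $\det M=1-s>0$ (with $s=\sum_ic_ix_i$), and $M^{-1}=I+xc^T/(1-s)$ has strictly positive entries. The unique real solution of $Mz=x$ is then $z^\ast=x/(1-s)\in\R_{>0}^d$. The first hypothesis $s<1$ ensures $z^\ast>0$, while the second hypothesis $\prod x_i>1-s$ becomes the multiplicative bound $\prod z^\ast_j>(1-s)^{-(d-1)}\ge 1$. The feasibility region $\tilde R=\{z\in\R_{\ge 0}^d:Mz<x\}$ is a convex polytope whose $2^d$ vertices are $v_J=x_J/(1-s_J)$ for $J\subseteq[d]$ (where $s_J=\sum_{i\in J}c_ix_i$), with the origin and $z^\ast$ at opposite corners.

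A natural attempt is the rounding $\tilde z=\lfloor z^\ast\rfloor$. Writing $z^\ast=\tilde z+\delta$ with $\delta\in[0,1)^d$ and $D=\sum_ic_i\delta_i$, a short computation reduces the target inequality $\tilde z_j<x_j(1+\sum_ic_i\tilde z_i)$ to $\delta_j>x_jD$ for every $j$. Weighting by $c_j$ and summing yields $D(1-s)>0$, automatic whenever $z^\ast\notin\Z^d$; however the individual inequalities may fail, and adjusting a single coordinate tightens rather than loosens the constraints at the others, so this rounding has to be guided by the multiplicative hypothesis rather than performed blindly. I would therefore pursue instead a volumetric route via van der Corput's theorem (Theorem~\ref{M1T}). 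After the substitution $y=z/(1+\sum_ic_iz_i)$ and the resulting Jacobian calculation, the volume of $\tilde R$ admits the explicit inclusion--exclusion expression $\vol(\tilde R)=(d!\prod_ic_i)^{-1}\sum_{J\subseteq[d]}(-1)^{d-|J|}(1-s_J)^{-1}$, closely related to the determinantal identity of Lemma~\ref{dc}. One then intersects $\tilde R$ with its reflection through an interior point $v$ to obtain a centrally symmetric body $\square$, and uses the bound $\prod z^\ast_j>(1-s)^{-(d-1)}$ to verify $\gamma(0\in\square)^d\vol(\square)\ge 1$, producing a nonzero interior lattice point.

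The main obstacle is the quantitative comparison between the multiplicative bound $\prod z^\ast_j>(1-s)^{-(d-1)}$ and the volume--asymmetry product required by Theorem~\ref{M1T}: the boundary case $\prod x_i=1-s$ is sharp (one can then exhibit $x,c$ admitting no admissible $z$), so the estimate must be tight and the choice of $v$ has to be made with care. The degenerate case $z^\ast\in\Z^d$, when the rounding returns equality in every constraint, is handled separately, using that $\prod z^\ast_j>1$ forces some $z^\ast_j\ge 2$; the required nonzero lattice point then lies on the edge of $\tilde R$ joining $0$ to a suitable vertex $v_{[d]\setminus\{k\}}$.
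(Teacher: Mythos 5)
Your reformulation $Mz<x$ with $M=I-xc^T$ and $z^*=x/(1-s)$ is correct, and rejecting the naive rounding $\lfloor z^*\rfloor$ is justified. But the volumetric argument you substitute is not carried through, and you say so yourself: the step where $\prod_j z^*_j>(1-s)^{-(d-1)}$ is to be compared with $\gamma(0\in\square)^d\vol(\square)$, for a centrally symmetric $\square$ built from $\tilde R$ and a suitably chosen center $v$, is left open. It is also not clear that Theorem~\ref{M1T} even applies as you set things up: that theorem requires $0$ to lie in the \emph{interior} of $\square$, whereas $0$ is a \emph{vertex} of your feasibility polytope $\tilde R$, so intersecting $\tilde R$ with a reflected copy does not by itself produce a body to which the theorem is applicable without further repositioning. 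As written, this is a genuine gap, not merely an omitted computation.

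The paper avoids the difficulty entirely by choosing a centrally symmetric convex body about $0$ from the start. Let $A_{jk}=c_k-\delta_{jk}/x_j$, so that $(Az)_j=\sum_ic_iz_i-z_j/x_j$, and put $U=\{z\in\R^d:\Vert Az\Vert_\infty<1\}$; this is symmetric about the origin. Lemma~\ref{dc} gives $\det A=(-1)^d(1-s)/\prod_ix_i$, and your first hypothesis $1-\prod_ix_i<s$ is exactly $|\det A|<1$, so $\vol_{\Z^d}(U)=2^d/|\det A|>2^d$. Minkowski's first theorem (the symmetric case $\gamma(0\in U)=\frac{1}{2}$ of Theorem~\ref{M1T}) then produces $0\neq z\in\Z^d\cap U$. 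After replacing $z$ by $-z$ if necessary so that $\sum_ic_iz_i\ge 0$, the inequality $\sum_ic_iz_i-z_j/x_j<1$ forces $z_j>-x_j$; since $c_i\ge 1$ and $s<1$ imply $\sum_ix_i<1$, hence $x_j<1$, integrality gives $z_j\ge 0$. The opposite inequality $-1<\sum_ic_iz_i-z_j/x_j$ is precisely the conclusion. The insight you were missing is that one does not need to symmetrize the asymmetric region $\tilde R$: the two-sided condition $\Vert Az\Vert_\infty<1$ already packages a symmetric body whose volume is computed exactly by Lemma~\ref{dc}, and the hypothesis $c_i\ge 1$ converts the symmetric Minkowski output into a point of $\N^d$ with no slack to spare.
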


\begin{proof}
Consider the convex set $U=\{z\in \R^d;\Vert A z\Vert_\infty<1\}$, where A is the $d\times d$ matrix
\begin{equation*}
\left(
  \begin{array}{ccccc}
   c_1-\frac{1}{x_1} & c_2  &  & c_d\\
    c_1     & c_2-\frac{1}{x_2}   &    & c_d \\
      \vdots   &  &  \ddots &   \vdots \\
   c_1 &   \ldots & c_{d-1} & c_d-\frac{1}{x_d}
  \end{array} \right)
\end{equation*}
and the norm $\Vert \cdot \Vert_\infty$ is the maximum absolute value of the components.
By Lemma~\ref{dc}, we compute 
$$
\det A=(-1)^d\frac{1-\sum_i c_i x_i}{\prod_i x_i}.
$$ 
By assumption, $0<|\det A|<1$. Then $\vol_{\Z^d}(U)=\frac{2^d}{|\det A|}>2^d$. The convex body
$U$ is symmetric about the origin.
By Minkowski's first theorem, there exists $0\ne z\in \Z^d\cap U$. That is $z\in \Z^d\setminus 0$
and $|\sum_i c_iz_i-\frac{z_j}{x_j}|<1$ for every $j$. 
We may suppose $\sum_i c_iz_i\ge 0$, after possibly replacing $z$ by $-z$. 

The inequality $\sum_i c_iz_i-\frac{z_j}{x_j}<1$ gives
$\frac{z_j}{x_j}>-1$, that is $z_j>-x_j$. 
Since $c_i\ge 1$, we obtain $\sum_{i=1}^dx_i<1$. In particular, $x_j<1$.
Therefore $z_j>-1$, that is $z_j\ge 0$.

The other inequality $-1<\sum_i c_iz_i-\frac{z_j}{x_j}$ is equivalent to 
$
\frac{z_j}{1+\sum_i c_iz_i}<x_j.
$
\end{proof}

The following statement is the effective version of~\cite[Lemma 2.4]{Hen83}.
The case $q=1$ was obtained in~\cite[Theorem 1.1]{Ave12}.

\begin{thm}\label{LHN}
Let $q$ be a positive integer, let $1\le c_1,\ldots,c_d\le q$.
Let $x_1\ge \ldots\ge x_d>0$ such that $\sum_{i=1}^dx_i\ge \sum_{i=1}^d\frac{q}{1+u_{i,q}}$.
Suppose $x_i\ne \frac{q}{1+u_{i,q}}$ for some $i$. Then there exists 
$z\in \N^d\setminus 0$ such that 
$
\frac{c_jz_j}{1+\sum_i c_iz_i}<x_j
$
for every $j$.
\end{thm}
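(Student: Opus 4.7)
My plan is to reduce Theorem~\ref{LHN} to Lemma~\ref{15} applied to a truncated tuple obtained by picking an appropriate index $l\le d$ and using the substitution $\tilde x_i = x_i/c_i$. With this substitution, Lemma~\ref{15}'s conclusion $\frac{z_j}{1+\sum c_iz_i}<\tilde x_j = x_j/c_j$ is exactly the inequality we want after multiplying by $c_j$; extending $z$ by zeros from $\N^l$ to $\N^d$ trivially handles indices $j>l$. Everything then reduces to finding $l$ for which the two hypotheses of Lemma~\ref{15} are satisfied: (i) $\sum_{i=1}^l x_i < 1$ and (ii) $\prod_{i=1}^l(x_i/c_i) > 1 - \sum_{i=1}^l x_i$. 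Since $c_i\le q$, condition (ii) is implied by the simpler condition $\prod_{i=1}^l(x_i/q) > 1-\sum_{i=1}^l x_i$, which, setting $y_i=x_i/q$, becomes $\prod_{i=1}^l y_i > 1-q\sum_{i=1}^l y_i$.

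To produce (ii), I invoke the contrapositive of Lemma~\ref{SL}: the hypotheses $\sum_{i=1}^d y_i \ge \sum_{i=1}^d \frac{1}{1+u_{i,q}}$ together with $y_i\ne \frac{1}{1+u_{i,q}}$ for some $i$ guarantee the existence of some $l$ with the desired strict inequality. The main obstacle is ensuring that the index $l$ produced this way also satisfies (i). I will first dispose of the easy case $x_1\ge 1$ by taking $z=(1,0,\ldots,0)$, for which $c_1/(1+c_1) < 1 \le x_1$; in the remaining range $x_1<1$, I introduce $l_0$ = the largest index with $\sum_{i=1}^{l_0} x_i < 1$.

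If some $l\le l_0$ satisfies $\prod_{i=1}^l y_i > 1 - q\sum_{i=1}^l y_i$, both (i) and (ii) are in place and Lemma~\ref{15} applies directly. Otherwise $\prod_{i=1}^l y_i \le 1 - q\sum_{i=1}^l y_i$ for every $l \le l_0$, and Lemma~\ref{SL} applied to $(y_1,\ldots,y_{l_0})$ yields $\sum_{i=1}^{l_0} x_i \le \sum_{i=1}^{l_0} \frac{q}{1+u_{i,q}}$. When $l_0 = d$, combining this with the theorem's hypothesis forces equality throughout, and the equality clause of Lemma~\ref{SL} then forces $x_i = \frac{q}{1+u_{i,q}}$ for every $i$, contradicting our assumption.

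The only genuinely subtle case is $l_0 < d$, where by maximality of $l_0$ we have $\sum_{i=1}^{l_0+1} x_i \ge 1$, so we cannot apply Lemma~\ref{15} to the first $l_0+1$ variables directly. My plan here is a small perturbation: replace $x_{l_0+1}$ by $\tilde x_{l_0+1} = 1 - \sum_{i=1}^{l_0} x_i - \epsilon$ with $\epsilon>0$ chosen small enough that $\prod_{i=1}^{l_0}(x_i/c_i)\cdot(\tilde x_{l_0+1}/c_{l_0+1}) > \epsilon$. Since $x_{l_0+1} \ge 1 - \sum_{i=1}^{l_0} x_i > \tilde x_{l_0+1}$, applying Lemma~\ref{15} to the modified tuple $(x_1/c_1,\ldots,x_{l_0}/c_{l_0},\tilde x_{l_0+1}/c_{l_0+1})$ and extending the resulting $z$ to $\N^d$ by zeros produces a vector whose bounds $\frac{c_j z_j}{1+\sum c_iz_i} < \tilde x_j \le x_j$ are strong enough for the theorem.
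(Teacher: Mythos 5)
Your proof is correct, and the core machinery is the same as the paper's: apply the contrapositive of Lemma~\ref{SL} to $(x_i/q)_i$ to find an index $l$ where $\prod_{i=1}^l x_i > q^l(1-\sum_{i=1}^l x_i)$, weaken $q^l$ to $\prod c_i$, and feed $(x_i/c_i)_i$ into Lemma~\ref{15}. The one place where you depart from the paper is the reduction to the range where Lemma~\ref{15} is applicable. The paper simply writes ``We may suppose $\sum_{i=1}^d x_i<1$'' and gives no justification; you handle $\sum_{i=1}^d x_i\ge 1$ explicitly, first disposing of $x_1\ge1$ with $z=(1,0,\dots,0)$, then truncating at the maximal $l_0$ with partial sum below $1$, and, in the only remaining subcase $l_0<d$, replacing $x_{l_0+1}$ by a slightly smaller $\tilde x_{l_0+1}$ so that the two hypotheses of Lemma~\ref{15} hold simultaneously. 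This is a correct way to fill the gap, though slightly heavier than necessary: a cleaner alternative, and very likely what the author has in mind, is to replace $(x_i)$ by $(\lambda x_i)$ with a single scalar $\lambda\in\bigl(\tfrac{\sum_i q/(1+u_{i,q})}{\sum_i x_i},\ \tfrac{1}{\sum_i x_i}\bigr)$; this interval is nonempty because $\sum_i\frac{q}{1+u_{i,q}}=1-\frac{q}{u_{d+1,q}}<1$, the scaled tuple keeps the ordering, satisfies $\sum\lambda x_i<1$, and has $\sum\lambda x_i$ strictly above the threshold, so the non-equality hypothesis holds automatically, and a $z$ that works for the smaller tuple works for the original one. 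Either way, your argument is complete and the paper's is not, so the extra care is warranted.
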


\begin{proof} We may suppose $\sum_{i=1}^dx_i<1$. 
By Lemma~\ref{SL} for $(\frac{x_i}{q})_i$, there exists $1\le l\le d$ such that 
$\prod_{i=1}^lx_i>q^l(1-\sum_{i=1}^lx_i)$. In particular, 
$\prod_{i=1}^lx_i>(\prod_{i=1}^lc_i)(1-\sum_{i=1}^lx_i)$.

By Lemma~\ref{15} for $(\frac{x_i}{c_i})_i$, there exists
$z\in \N^l\setminus 0$ such that 
$
\frac{c_jz_j}{1+\sum_{i=1}^l c_iz_i}<x_j
$ 
for all $1\le j\le l$. Set $z_j=0$ for $j>l$. Then $z\in \N^d\setminus 0$ satisfies
the claim.
\end{proof}


\subsection{Upper bound for coefficient of asymmetry}


The case $q=1$ of the following result was obtained in~\cite[Theorem 2.1]{AKN}.

\begin{thm}\label{ss} Let $q$ be a positive integer. Let $N$ be a $d$-dimensional lattice
and $S$ a simplex with vertices in $N$ such that $N\cap \Int(\frac{1}{q}S)=\{0\}$. 
Then $\gamma(0\in S)\ge \frac{q}{u_{d+1,q}}$,
and equality holds if and only if there exists an isomorphism $N\simeq \Z^d$ which maps
$S$ to the convex hull of $e_0,\ldots,e_d$, where $e_1,\ldots,e_d$
is the standard basis of $\Z^d$ and $e_0=-\sum_{i=1}^d\frac{u_{d+1,q}}{1+u_{i,q}}e_i$. 
\end{thm}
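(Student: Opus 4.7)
The plan is to reduce the inequality to a direct application of Theorem~\ref{LHN}, after choosing coordinates adapted to the vertex of $S$ closest to the origin. For the equality case, the barycentric coordinates will be pinned down by the rigidity built into Theorem~\ref{LHN}, while the lattice structure will require an additional argument which I anticipate being the main difficulty.

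First I would label the vertices $v_0,\ldots,v_d$ of $S$ so that the barycentric coordinates of $0$ satisfy $\gamma_1\ge\cdots\ge\gamma_d\ge\gamma_0$; then $\gamma(0\in S)=\gamma_0$ and $v_1,\ldots,v_d$ form an $\R$-basis of $N_\R$. Setting $\alpha_i=\gamma_i/\gamma_0$ gives $v_0=-\sum\alpha_iv_i$ and $\gamma_0=1/(1+\sum\alpha_i)$. Next I would compute that for $z\in\R_{\ge 0}^d$, the point $w=-\sum z_iv_i$ has, with respect to $qw$, barycentric coordinates $\beta_0=\gamma_0(1+q\sum z_j)$ and $\beta_i=\gamma_i(1+q\sum z_j)-qz_i$ ($i\ge 1$), which sum to $1$. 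This translates the geometric condition $w\in\Int(\tfrac{1}{q}S)$ into exactly the diophantine inequalities $qz_i/(1+q\sum z_j)<\gamma_i$ appearing in Theorem~\ref{LHN}.

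The main bound then falls out: if $\gamma_0<q/u_{d+1,q}$, the Sylvester-type identity $\sum_{i=1}^d q/(1+u_{i,q})=1-q/u_{d+1,q}$ forces $\sum\gamma_i>\sum q/(1+u_{i,q})$, so Theorem~\ref{LHN} (with all $c_i=q$, $x_i=\gamma_i$) supplies $z\in\N^d\setminus 0$ making the inequalities hold; the associated $w$ lies in the sublattice $M:=\Z v_1+\cdots+\Z v_d\subseteq N$, is nonzero, and lies in $\Int(\tfrac{1}{q}S)$, contradicting the hypothesis. The same construction together with the side condition of Theorem~\ref{LHN} forces, in the equality case, $\gamma_i=q/(1+u_{i,q})$ for every $i\ge 1$. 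From $u_{d+1,q}=q\prod_j(1+u_{j,q})$ one then reads off $\alpha_i=q\prod_{j\ne i}(1+u_{j,q})\in\Z$, so $v_0\in M$, and the map $v_i\mapsto e_i$ identifies $M$ with $\Z^d$ and $v_0$ with the prescribed $e_0$.

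The hard part will be showing $N=M$ in the equality case. If $N\supsetneq M$ and $w\in N\setminus M$ has a mod-$M$ representative $\sum c_iv_i$ with $c_i\in[0,1)\cap\Q$, the straightforward try---reducing $w$ or $-w$ modulo $M$---can fail: already for $q=d=2$ and $w=(\tfrac12,\tfrac12)$, both representatives land on the boundary of $\tfrac{1}{q}S$. My plan is to search over all integer translates $w+\sum a_iv_i$ and to show that some translate lies in $\Int(\tfrac{1}{q}S)$; equivalently, that the image of $\Int(\tfrac{1}{q}S)$ in $N_\R/M$ covers the punctured torus. I expect this to follow from the Sylvester identity and the sharpness of the extremal simplex, either via a refined application of Theorem~\ref{LHN} to the shifted lattice $c+\Z^d$ or through a direct covering/volume argument exploiting the very specific barycentric data of the extremum.
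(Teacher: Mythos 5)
Your argument for the inequality and for pinning down the barycentric coordinates in the equality case is correct and matches the paper's: order so that $\gamma_0=\min_i\gamma_i$, observe $\gamma_0<q/u_{d+1,q}$ forces $\sum_{i\ge 1}\gamma_i>\sum_{i=1}^d q/(1+u_{i,q})$, apply Theorem~\ref{LHN} with $c_i=q$ and $x_i=\gamma_i$, and translate the resulting $z$ into the nonzero lattice point $-\sum z_iv_i\in N\cap\Int(\tfrac{1}{q}S)$. In the equality case $\sum_{i\ge 1}\gamma_i=\sum_{i=1}^d q/(1+u_{i,q})$ exactly, and Theorem~\ref{LHN} then forces $\gamma_i=q/(1+u_{i,q})$ for $i\ge 1$. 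You also correctly note $\alpha_i=u_{d+1,q}/(1+u_{i,q})\in\Z$, so $v_0\in M:=\Z v_1+\cdots+\Z v_d$.

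The gap you flag is real, and the two fallback strategies you propose do not close it. A Blichfeldt/volume argument only yields two points of $\Int(\tfrac{1}{q}S)$ differing by an element of $M$; it does not show that a prescribed nonzero coset of $N/M$ meets $\Int(\tfrac{1}{q}S)$, and there is no reason to expect the image of $\Int(\tfrac{1}{q}S)$ in $N_\R/M$ to cover the punctured torus. Indeed, the natural fundamental parallelepiped $\sum_i(0,1]v_i$ is not contained in $\Int(\tfrac{1}{q}S)$ already for $q\ge 2$, as your $q=d=2$ experiment suggests, so the naive domain is wrong. The missing idea in the paper is arithmetic, not volumetric: from $-v_0=\sum_{i=1}^d\tfrac{q}{1+u_{i,q}}(v_i-v_0)\in N$, the pairwise coprimality of the $1+u_{i,q}$ gives $\tfrac{q}{1+u_{i,q}}(v_i-v_0)\in N$ for each $i$, and then $q\mid u_{i,q}$ (so $\gcd(q,1+u_{i,q})=1$) upgrades this to $w_i:=\tfrac{v_i-v_0}{1+u_{i,q}}\in N$. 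These $w_i$ are the right basis: the shifted box $\tfrac{v_0}{q}+\sum_{i=1}^d(0,1]w_i$ (the paper writes $v_0+\sum(0,1]w_i$, which appears to be a slip; in terms of $w$-coordinates $\tfrac{1}{q}S$ has vertex $\tfrac{v_0}{q}=-\sum_i w_i$, and the box condition is exactly $\sum_i x_i/(1+u_{i,q})<1/q$, valid by the Sylvester identity) is a fundamental parallelepiped for $\Z w_1+\cdots+\Z w_d$ contained in $\Int(\tfrac{1}{q}S)$, whence $N=\Z w_1+\cdots+\Z w_d$. Finally, $v_i=(1+u_{i,q})w_i-q\sum_j w_j$ has matrix of determinant $1$ by Lemma~\ref{dc}, so $\Z v_1+\cdots+\Z v_d=\Z w_1+\cdots+\Z w_d=N$. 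Without the coprimality step producing the $w_i$, there is no candidate fundamental domain inside $\Int(\tfrac{1}{q}S)$, and that is why your covering plan stalls.
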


\begin{proof}
Let $v_0,\ldots,v_d$ be the vertices of $S$. Let $0=\sum_{i=0}^d\gamma_iv_i$ be the 
barycentric coordinates of the origin. Suppose $\gamma_0=\min_i \gamma_i$.

Suppose by contradiction that $\gamma(0\in S)<\frac{q}{u_{d+1,q}}$. That is 
$\gamma_0<\frac{q}{u_{d+1,q}}$. Then $\sum_{i=1}^d\gamma_i>\sum_{i=1}^d\frac{q}{1+u_{i,q}}$.
By Theorem~\ref{LHN}, there exists $z\in \N^d\setminus 0$ such that 
$
\frac{qz_i}{1+\sum_i qz_i}<\gamma_i \ (1\le i\le d).
$
Set $z_0=0$ and denote $|z|=\sum_i z_i$. We have 
$$
-q\sum_i z_iv_j=\sum_i ((1+q|z|)\gamma_i-qz_i)v_i.
$$
On the right hand side, the coefficients of $v_i$ are positive, and add up to $1$. Therefore
$-q\sum_i z_iv_j\in \Int(S)$. That is $-\sum_i z_iv_i\in N\cap \Int(\frac{1}{q}S)\setminus 0$, a contradiction.

We conclude that $\gamma_0\ge \frac{q}{u_{d+1,q}}$. Suppose now that $\gamma_0=\frac{q}{u_{d+1,q}}$.
The above arguments and Theorem~\ref{LHN} give $\gamma_i=\frac{q}{1+u_{i,q}}$ for $1\le i\le d$. We obtain
the barycentric coordinates
$$
0=\frac{q}{u_{d+1,q}}v_0+\sum_{i=1}^d\frac{q}{1+u_{i,q}}v_i.
$$
In particular, $\sum_{i=1}^d\frac{q}{1+u_{i,q}}(v_i-v_0)=-v_0\in N$. Since $(1+u_{i,q})_i$ are pairwise
relatively prime, we obtain $\frac{q}{1+u_{i,q}}(v_i-v_0)\in N$ for every $i$. Since $q\mid u_{i,q}$, 
we deduce $v_i-v_0=(1+u_{i,q})w_i$ for some $w_i\in N$. The vectors $w_1,\ldots,w_d\in N$
are linearly independent, and $v_0=-q\sum_{i=1}^dw_i$. 

The inequality $\sum_{i=1}^d\frac{1}{1+u_{i,q}}<\frac{1}{q}$ implies
$
v_0+\sum_{i=1}^d(0,1]w_i\subset \Int(\frac{1}{q}S).
$
Let $x_i\in (0,1]$ such that $\sum_{i=1}^d x_iw_i\in N$. Then $v_0+\sum_{i=1}^dx_iw_i\in N\cap \Int(\frac{1}{q}S)$.
Therefore $v_0+\sum_{i=1}^dx_iw_i=0$. Therefore $x_i=1$ for all $i$. We conclude
that $w_1,\ldots,w_d$ is a basis for the lattice $N$. 

But $v_i=(1+u_{i,q})w_i-\sum_{j=1}^d qw_j=\sum_ja_{ij}w_j\ (1\le i\le d)$. 
By Lemma~\ref{dc}, we compute $\det(a_{ij})=1$.
Therefore $v_1,\ldots,v_d$ is a basis of $N$.
This induces an isomorphism $N\simeq \Z^d$ such that $v_i\ (1\le i\le d)$ correspond to the
standard basis $e_i \ (1\le i\le d)$, and $v_0$ corresponds to $-\sum_{i=1}^d\frac{u_{d+1,q}}{1+u_{i,q}}e_i$.
 \end{proof}

The following is the sharp version of~\cite[Corollary 3.2]{Hen83}:

\begin{thm}\label{sHe}
Let $\square\subset\R^d$ be a compact polytope with vertices in $\Z^d$, of dimension $d$. Suppose
$\Z^d\cap \Int\square$ has cardinality $q\ge 1$. Then for every $P\in \Z^d\cap \Int\square$
we have 
$$
\gamma(P\in \square)\ge \frac{q}{u_{d+1,q}}.
$$
\end{thm}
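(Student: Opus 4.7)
The plan is to reduce the statement to Theorem~\ref{ss} via the simplex trick discussed just before that theorem. After translating so that $P=0\in\Z^d\cap\Int\square$ (which preserves all hypotheses since $P$ is a lattice point), the simplex trick produces a simplex $S$ with vertices among those of $\square$ (hence in $\Z^d$), containing $0$ in its relative interior, and satisfying $c(0\in\square)\le c(0\in S)$. In terms of $\gamma$ this reads $\gamma(0\in\square)\ge\gamma(0\in S)$, so it suffices to show $\gamma(0\in S)\ge q/u_{d+1,q}$.

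Assume first that $\dim S=d$. The key input for Theorem~\ref{ss} is the verification that $\Z^d\cap\Int(\tfrac{1}{q}S)=\{0\}$, and this is where the hypothesis $\#(\Z^d\cap\Int\square)=q$ enters. Suppose for contradiction that some nonzero lattice vector $v$ lies in $\Int(\tfrac{1}{q}S)$. Then $qv\in\Int S$, and for each $1\le j\le q$ the convex-combination identity
$$
jv=\tfrac{j}{q}(qv)+\bigl(1-\tfrac{j}{q}\bigr)\cdot 0
$$
realizes $jv$ as a combination of two points of $\Int S$ with strictly positive weights, forcing $jv\in\Int S$. Since $\dim S=d$ and $S\subseteq\square$, we have $\Int S\subseteq\Int\square$, so $\{0,v,2v,\ldots,qv\}$ is a set of $q+1$ distinct lattice points in $\Int\square$, contradicting the hypothesis. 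Theorem~\ref{ss} with $N=\Z^d$ then yields $\gamma(0\in S)\ge q/u_{d+1,q}$, which combined with the simplex-trick inequality finishes this case.

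The main obstacle is the possibility that the simplex trick outputs a simplex $S$ of dimension $k<d$ (which does happen, e.g.\ when $\square$ is centrally symmetric at $P$). The passage $\Int S\subseteq\Int\square$ is no longer automatic: a point in $\relint S$ may lie on $\partial\square$, so the counting argument above only places $0,v,\ldots,(q-1)v$ in $\Int\square$, with $qv$ possibly on the boundary. To handle this, one works in the affine span $H=\mathrm{span}(S)$ with sublattice $L=\Z^d\cap H$ of rank $k$ and applies Theorem~\ref{ss} in dimension $k$; monotonicity of $u_{p,q}$ in $p$ gives the necessary slack, since $q/u_{k+1,q}\ge q/u_{d+1,q}$. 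Either $L\cap\relint_H(\tfrac{1}{q}S)=\{0\}$ holds and Theorem~\ref{ss} provides the stronger bound $q/u_{k+1,q}$, or the failure of this condition forces the $q$ interior lattice points of $\square$ to be collinear with $0$, reducing the problem to a one-dimensional calculation analogous to Lemma~\ref{cga}.
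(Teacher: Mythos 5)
Your overall plan is correct and matches the paper: translate so $P=0$, apply the simplex trick to get a simplex $S$ with vertices among those of $\square$ and $0\in\relint S$, $\gamma(0\in\square)\ge\gamma(0\in S)$, and then feed $S$ into Theorem~\ref{ss} using the monotonicity $u_{d'+1,q}\le u_{d+1,q}$. In the full-dimensional case your counting argument is exactly right.

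The gap is in your treatment of the case $\dim S=d'<d$. You write that ``a point in $\relint S$ may lie on $\partial\square$'', but this is impossible in the present situation, and this is precisely the fact that lets the paper handle both cases in a single stroke. Since $S\subseteq\square$, $0\in\relint S$, and $0\in\Int\square$, one has $\relint S\subseteq\Int\square$: given $x\in\relint S$, relative openness of $\relint S$ inside $\operatorname{aff}(S)$ gives a small $\epsilon>0$ with $(1+\epsilon)x\in\relint S\subseteq\square$, and then
$$
x=\tfrac{1}{1+\epsilon}\bigl((1+\epsilon)x\bigr)+\tfrac{\epsilon}{1+\epsilon}\cdot 0
$$
exhibits $x$ as a convex combination with strictly positive weight on the interior point $0$, hence $x\in\Int\square$. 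Consequently, if the contrapositive of Theorem~\ref{ss} (applied to the rank-$d'$ lattice $L=\Z^d\cap\operatorname{aff}(S)$) produces $0\ne e\in L\cap\relint\bigl(\tfrac{1}{q}S\bigr)$, then \emph{all} of $0,e,\dots,qe$ lie in $\relint S\subseteq\Int\square$, yielding $q+1$ interior lattice points and the desired contradiction — no dichotomy is needed. Your proposed fallback (``the $q$ interior points are collinear, reduce to a one-dimensional calculation analogous to Lemma~\ref{cga}'') is unjustified: collinearity of the interior lattice points does not follow from the failure you describe, and Lemma~\ref{cga} concerns $\alpha$-invariants of curves, not lattice polytope asymmetry, so the intended reduction is not spelled out and would need a genuinely new argument. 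Replacing that branch by the convexity observation above closes the gap and recovers the paper's proof.
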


\begin{proof} Fix $P\in \square$. We may replace $\square$ by $\square-P$, so that $P=0$.
Suppose by contradiction that $\gamma(0\in \square)<\frac{q}{u_{d+1,q}}$.
By the simplex trick, there exists a simplex $S$ with vertices
among those of $\square$, which contains $0$ in its relative interior, and $\gamma(0\in \square)\ge \gamma(0\in S)$.

Let $\dim S=d'\le d$. Then $\gamma(0\in S)\le \gamma(0\in \square)<\frac{q}{u_{d+1,q}} \le \frac{q}{u_{d'+1,q}}$. 
By Theorem~\ref{ss}, there exists $0\ne e\in \Z^d\cap \relint(\frac{1}{q}S)$.
Then $0,e,2e,\ldots,qe$ are $q+1$ distinct lattice points in the relative interior of $S$. They must be contained
in $\Z^d\cap \Int\square$. Contradiction!
\end{proof}

As in~\cite[Theorem 3.6, Corollary 3.7]{Hen83}, we obtain

\begin{cor} Let $\Z^d,\square,q$ as above. Then $\vol_{\Z^d}(\square)\le q (\frac{u_{d+1,q}}{q})^d$ and
$\Z^d\cap \square$ has cardinality at most $d+d!q (\frac{u_{d+1,q}}{q})^d$.
\end{cor}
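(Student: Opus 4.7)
The plan is to combine Theorem~\ref{sHe} with two classical tools: the Minkowski/Van der Corput estimate of Theorem~\ref{M1T}, and Blichfeldt's bound for lattice points in a lattice polytope. The substantive combinatorial work has already been carried out in Theorems~\ref{LHN}, \ref{ss} and \ref{sHe}; what remains is essentially bookkeeping.

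For the volume bound, I would pick any $P \in \Z^d \cap \Int\square$ (such a $P$ exists since $q \geq 1$) and translate so that $P = 0$. Theorem~\ref{sHe} applied to $\square - P$ gives $\gamma(0 \in \square - P) \geq q / u_{d+1,q}$, and Theorem~\ref{M1T} yields
$$
q = |\Z^d \cap \Int(\square - P)| \geq \gamma(0 \in \square - P)^d \cdot \vol_{\Z^d}(\square - P).
$$
Since both $\vol_{\Z^d}$ and $|\Z^d \cap \Int\square|$ are translation-invariant, rearranging gives $\vol_{\Z^d}(\square) \leq q \cdot (u_{d+1,q}/q)^d$, which is the first claim.

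For the bound on the total number of lattice points, I would invoke Blichfeldt's theorem: if $K \subset \R^d$ is a convex body containing $d+1$ affinely independent lattice points, then $|\Z^d \cap K| \leq d + d! \cdot \vol_{\Z^d}(K)$. Since $\square$ is $d$-dimensional with vertices in $\Z^d$, it certainly contains $d+1$ affinely independent lattice points, so Blichfeldt applies. Combining with the volume bound above gives $|\Z^d \cap \square| \leq d + d! \cdot q \cdot (u_{d+1,q}/q)^d$, as required.

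No step appears to be a serious obstacle, since all three ingredients — Theorem~\ref{sHe}, Van der Corput, and Blichfeldt — are already at hand or are standard. The only mild subtlety is making sure the translation by $P$ is permissible; this is fine because both the volume (with respect to $\Z^d$) and the interior lattice count are manifestly invariant under integer translations, and the asymmetry coefficient is defined relative to the chosen interior point.
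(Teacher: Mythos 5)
Your proof is correct and follows essentially the same route the paper intends (it simply cites Hensley's Theorem 3.6 and Corollary 3.7, which proceed exactly by combining the lower bound on $\gamma(P\in\square)$ with the Van der Corput estimate and Blichfeldt's lattice-point inequality). The translation-invariance observation and the remark that a $d$-dimensional lattice polytope has $d+1$ affinely independent lattice vertices are the right details to check, and they both hold.
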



\subsection{Errata to~\cite{Amb08}}

The upper bound $n \le c_d q^d$ in~\cite[Theorem 1.1]{Amb08} is not correct. 
In Step 1 of the proof, the constant $\gamma$ depends not only on $d-1$, but
on $q$ as well. Since $\Lambda\simeq\Z^{d-1}$, $S$ has vertices in 
$\frac{1}{q}\Lambda$ and $\Lambda\cap \Int(S)=\{0\}$, Lemma~\ref{bl} gives 
$\gamma\ge \frac{q}{u_{d,q}}$. Step 2 of the proof gives $j\le d!q^{d-1}\gamma^{-d+1}$. 
Since $ja\in \Z$, we obtain a correct effective upper bound for Theorem 1.1
$$
n\le d! u_{d,q}^{d-1}q.
$$
This bound is probably not sharp.

\begin{lem}\label{bl}
Let $S$ be a simplex with vertices in $\frac{1}{q}\Z^d$, and such that $\{0\}=\Z^d\cap \Int(S)$.
Then $\gamma(0\in S)\ge \frac{q}{u_{d+1,q}}$.
\end{lem}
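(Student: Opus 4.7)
The plan is to reduce Lemma~\ref{bl} directly to Theorem~\ref{ss} by a simple rescaling of the simplex.

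First, consider the dilated simplex $T = qS$. Since the vertices of $S$ lie in $\frac{1}{q}\Z^d$, the vertices of $T$ lie in $\Z^d$, so $T$ is a lattice simplex with respect to the lattice $N = \Z^d$. Moreover, the hypothesis $\{0\} = \Z^d \cap \Int(S)$ rewrites as
\[
\Z^d \cap \Int\!\left(\tfrac{1}{q}T\right) = \{0\},
\]
which is precisely the hypothesis of Theorem~\ref{ss} applied to $T$ and $N = \Z^d$. Therefore Theorem~\ref{ss} gives
\[
\gamma(0 \in T) \ge \frac{q}{u_{d+1,q}}.
\]

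Second, I would verify the scale-invariance $\gamma(0 \in T) = \gamma(0 \in S)$. This is immediate from the definition, since
\[
\gamma(0 \in qS) = \sup\{t \ge 0 : t(qS - qS) \subseteq qS\} = \sup\{t \ge 0 : t(S - S) \subseteq S\} = \gamma(0 \in S),
\]
using that scaling by $q > 0$ commutes with Minkowski difference and containment. Combining the two displays yields the desired inequality $\gamma(0 \in S) \ge \frac{q}{u_{d+1,q}}$.

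There is essentially no obstacle here: the lemma is merely the rescaled restatement of Theorem~\ref{ss}, included precisely because the erratum in the previous paper applies the bound to a simplex with vertices in $\frac{1}{q}\Z^d$ rather than in $\Z^d$. The only thing to be careful about is that the rescaling must match the $\frac{1}{q}$-factor appearing in Theorem~\ref{ss}, which it does.
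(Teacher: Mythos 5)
Your proof is correct and is essentially identical to the paper's own argument: rescale $S$ by $q$ to obtain a lattice simplex, observe that the interior lattice-point condition becomes the hypothesis of Theorem~\ref{ss}, apply that theorem, and use the scale-invariance of $\gamma(0\in\cdot)$. No differences worth noting.
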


\begin{proof}
The simplex $S'=qS$ has vertices in $\Z^d$ and $\{0\}=\Z^d\cap \Int(\frac{1}{q}S')$.
By Theorem~\ref{ss}, $\gamma(0\in S')\ge \frac{q}{u_{d+1,q}}$. But 
$\gamma(0\in S')=\gamma(0\in S)$.
\end{proof}


\section{Toric log Fano varieties}


Let $(X,B)$ be a {\em toric log variety}, that is a log variety such that $X=T_N\emb(\Delta)$ 
is a toric variety and $B$ is an invariant $\Q$-Weil divisor. If $\sum_i E_i$ is the 
complement of the torus in $X$, we have $B=\sum_i(1-a_i)E_i$. Since $K_X+\sum_i E_i=0$,
we obtain 
$$
K_X+B=\sum_i -a_iE_i.
$$
Let $\sigma\in \Delta(top)$. There exists $\psi_\sigma\in M_\Q$ such that
$(\chi^{\psi_\sigma})+K_X+B$ is zero on the open subset $U_\sigma$ of $X$.
That is $\langle \psi_\sigma,e_i\rangle=a_i$ for every $e_i\in \sigma(1)$.


\subsection{Minimal log discrepancies}


Each $e\in N^{prim}\cap \sigma$ defines a toric valuation $E_e$ over $U_\sigma$,
with log discrepancy computed by the formula
$$
a(E_e;X,B)=\langle \psi_\sigma,e\rangle.
$$
Since log resolutions exist in the toric category, we obtain 
$$
\mld(U_\sigma;X,B)=\min\{\langle \psi_\sigma,e\rangle;0\ne e\in N\cap \sigma\}.
$$
The moment polytope associated to the $\Q$-divisor $-K_X-B$ is  
$$
\square_{-K_X-B}=\{m\in M_\R;\langle m,e_i\rangle+a_i\ge 0\ \forall e_i\in \Delta(1)\}.
$$
It contains the origin of $M$.
If $a_i>0$ for every $i$, denote by $P$ the convex hull of $\frac{e_i}{a_i}\ (e_i\in \Delta(1))$.
We compute $P^*=\square$. By duality,
$$
\inf\{t>0;e\in tP\}=-h_{\square}(e) \ \forall e\in N_\R. 
$$


\subsection{Toric weak log Fano varieties}


Suppose moreover that $X$ is proper and $-K_X-B$ is $\Q$-semiample.
Note that $-K_X-B$ is $\Q$-semiample if and only if it is nef. And if $a_i>0$ for every $i$, the 
$\Q$-divisor $-K_X-B$ is necessarily big.

The $\Q$-semiampleness condition is equivalent to $-\psi_\sigma\in \square$, for every $\sigma\in \Delta(top)$.
And $-K_X-B$ is $\Q$-ample if and only if the vertices of $\square$ are precisely $(-\psi_\sigma)_{\sigma\in \Delta(top)}$.
Let $e\in N^{prim}$, let $\sigma\in \Delta(top)$
contain $e$. Then $\square+\psi_\sigma\subseteq \sigma^\vee$. Therefore 
$
\langle -\psi_\sigma,e\rangle=h_\square(e).
$
Therefore
$$
a(E_e;X,B)=-h_\square(e)=-\inf_{m\in \square}\langle m,e\rangle.
$$
We obtain the global formula
$
\mld(X,B)=-\sup_{0\ne e\in N} h_\square(e).
$
By duality, 
$$
\mld(X,B)=\inf\{t>0; \{0\}\subsetneq N\cap tP\}.
$$
By Corollary~\ref{g0}, the $\alpha$-invariant of $-K_X-B$ with respect to $(X,B)$
is computed by the formula
$$
\gamma(X,B;-K_X-B)=\gamma(0\in \square).
$$
By duality, $\gamma(0\in \square)=\gamma(0\in \square^*)$. Denote $d=\dim X$.

\begin{thm}\label{GB} Let $q$ be a positive integer. If $\mld(X,B)\ge \frac{1}{q}$, then 
$\gamma(0\in \square)\ge \frac{q}{u_{d+1,q}}$.
\end{thm}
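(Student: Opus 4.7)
My plan is to dualize and work with $P=\square^*$, the convex hull of the points $v_i=e_i/a_i$, one for each ray $e_i\in \Delta(1)$. Completeness of the fan places $0$ in the interior of $P$, and the hypothesis $\mld(X,B)\ge \frac{1}{q}$ translates, via the formula $\mld(X,B)=\inf\{t>0;\{0\}\subsetneq N\cap tP\}$ recalled just before the theorem, to $N\cap \Int(\frac{1}{q}P)=\{0\}$. Since $\gamma(0\in \square)=\gamma(0\in P)$, the goal is to prove $\gamma(0\in P)\ge \frac{q}{u_{d+1,q}}$.

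I would then apply the simplex trick from the geometry of numbers to obtain a sub-simplex $S\subseteq P$ with vertices $v_{i_0},\ldots,v_{i_{d'}}$ among those of $P$, with $0\in \relint(S)$, and with $\gamma(0\in P)\ge \gamma(0\in S)$. A standard supporting hyperplane argument using $0\in \Int(P)\cap \relint(S)$ shows that $\relint(S)\subseteq \Int(P)$, hence $\relint(\frac{1}{q}S)\cap N=\{0\}$. Writing the barycentric expression $0=\sum_{j=0}^{d'}\gamma_j v_{i_j}$ with $\gamma_0=\min_j \gamma_j$, it suffices to prove $\gamma_0\ge \frac{q}{u_{d+1,q}}$.

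Suppose for contradiction that $\gamma_0<\frac{q}{u_{d+1,q}}\le \frac{q}{u_{d'+1,q}}$. The identity $\sum_{i=1}^{d'}\frac{q}{1+u_{i,q}}=1-\frac{q}{u_{d'+1,q}}$ then yields $\sum_{j=1}^{d'}\gamma_j>\sum_{i=1}^{d'}\frac{q}{1+u_{i,q}}$. The crucial step is to apply Theorem~\ref{LHN} to the $\gamma_j$ (reordered to be decreasing) with weights $c_j=q\,a_{i_j}$: log canonicity gives $a_{i_j}\le 1$, while $\mld(X,B)\ge \frac{1}{q}$ gives $a_{i_j}\ge \frac{1}{q}$, so $c_j\in [1,q]$ as required. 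The theorem then produces $z\in \N^{d'}\setminus 0$ satisfying $\frac{c_jz_j}{1+\sum_k c_kz_k}<\gamma_j$ for every $j\ge 1$.

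Finally, I would form the lattice point $y=-\sum_{j=1}^{d'}z_je_{i_j}\in N$, which is nonzero because the vectors $e_{i_j}=a_{i_j}v_{i_j}$ for $j\ge 1$ are linearly independent (being positive rescalings of the $v_{i_j}$ with $j\ge 1$, which are linearly independent from the affine independence of the vertices of $S$). Using $0=\sum_j\gamma_jv_{i_j}$ and setting $\mu=1+\sum_k c_kz_k$, a direct barycentric calculation gives
\[
qy=\mu\gamma_0 v_{i_0}+\sum_{j=1}^{d'}(\mu\gamma_j-c_jz_j)v_{i_j},
\]
an affine combination of the vertices of $S$ whose coefficients are strictly positive by the inequalities just produced. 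Hence $qy\in \relint(S)$, so $y$ is a nonzero lattice point in $\relint(\frac{1}{q}S)\cap N$, contradicting what was derived from $\mld(X,B)\ge \frac{1}{q}$. The main subtlety is the weighted choice $c_j=q\,a_{i_j}$, which couples Theorem~\ref{LHN} to the toric boundary coefficients through the formula $e_{i_j}=a_{i_j}v_{i_j}$; everything else parallels the proof of Theorem~\ref{ss}.
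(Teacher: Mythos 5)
Your proof is correct and takes essentially the same route as the paper's: dualize to $P=\square^*$, apply the simplex trick, feed the barycentric coordinates into Theorem~\ref{LHN} with the crucial weights $c_j=qa_{i_j}$, and produce a nonzero lattice point too close to the origin. The only cosmetic difference is the final contradiction step --- you exhibit $y$ directly as a nonzero lattice point of $\relint(\tfrac1q S)\subseteq\Int(\tfrac1q P)$ (hence you need the supporting-hyperplane observation $\relint S\subseteq\Int P$, which you correctly supply), whereas the paper passes through the gauge function of $S$ and the log-discrepancy formula $a(E_{e'};X,B)=\inf\{t>0:e'\in tP\}$ to reach $\mld(X,B)<\tfrac1q$; these are dual phrasings of the same computation.
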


\begin{proof} The assumption gives $a_i\ge \frac{1}{q}$ for every $i$.
Suppose by contradiction that $\gamma< \frac{q}{u_{d+1,q}}$. 
By the simplex trick, there exists a simplex $S$, with vertices among those of $\square^*$, 
such that $0\in \relint S$ and $\gamma(0\in S)\le \gamma(0\in \square^*)$. Therefore 
$$
\gamma(0\in S)<\frac{q}{u_{d+1,q}}.
$$
Let $d'\le d$ be the dimension of $S$, and $\frac{e_i}{a_i}\ (0\le i\le d')$ its vertices. 
Let $\gamma_0,\ldots,\gamma_{d'}>0, \sum_{i=0}^{d'} \gamma_i=1$ and 
$
0=\sum_{i=0}^{d'}\gamma_i\frac{e_i}{a_i}.
$
We have $\gamma(0\in S)=\min_{i=0}^{d'}\gamma_i$. Say $\gamma_0$ is minimal.
Then $\gamma_0<\frac{q}{u_{d+1,q}}\le \frac{q}{u_{d'+1,q}}$, that is 
$\sum_{i=1}^{d'}\gamma_i>\sum_{i=1}^{d'}\frac{q}{u_{d'+1,q}}$. Note that $1\le qa_i\le q$. 
By Theorem~\ref{LHN}, there exists $z\in \N^{d'}\setminus 0$ such that 
$$
\frac{qa_i z_i}{1+\sum_{j=1}^{d'}qa_jz_j}<\gamma_i\ (1\le i\le d').
$$
These inequalities are equivalent to
$$
\frac{1}{q}>\max_{i=1}^{d'} \frac{a_iz_i}{\gamma_i}-\sum_{j=1}^{d'} a_jz_j.
$$
The right hand side is the smallest $t>0$ such that $e=\sum_{i=1}^{d'}-z_ie_i$
belongs to $tS$. Therefore $qe$ belongs to the relative interior of $S$, so in the
relative interior of $P$ as well.

Let $e'=e/n$ be the primitive vector on the ray generated by $e$. Then $e'$
defines a toric valuation of $X$ with log discrepancy
$a(E_{e'};X,B)<\frac{1}{qn}$. Therefore $\mld(X,B)<\frac{1}{q}$, a contradiction.
\end{proof}

\begin{cor}\label{vb} If $\mld(X,B)\ge \frac{1}{q}$, then $\sqrt[d]{(-K_X-B)^d}\le \frac{d}{q}u_{d+1,q}$.
\end{cor}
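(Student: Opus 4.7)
The approach is to combine the lower bound for the $\alpha$-invariant furnished by Theorem~\ref{GB} with the general upper bound for the $\alpha$-invariant in terms of the anticanonical volume furnished by Proposition~\ref{Sl}. Essentially all of the work has already been done; the corollary is a one-line deduction, and the only task is to verify that the hypotheses of the two results line up.

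First I would check that the assumptions of Proposition~\ref{Sl} are satisfied. The hypothesis $\mld(X,B)\ge \frac{1}{q}$ forces $a_i\ge \frac{1}{q}>0$ for every torus-invariant prime divisor $E_i$, so $(X,B)$ is klt and in particular $(X,B)_{lc}=X$. As remarked at the start of the subsection on toric weak log Fano varieties, strict positivity of all $a_i$ automatically makes $-K_X-B$ big, while nefness holds by the weak Fano hypothesis.

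Next I would invoke the identification
\[
\gamma(X,B;-K_X-B)=\gamma(0\in\square)
\]
established immediately before Theorem~\ref{GB} (via Corollary~\ref{g0}, whose mobility hypothesis is met since $-K_X-B$ is nef on a projective toric variety and hence semiample). Theorem~\ref{GB} then supplies
\[
\gamma(X,B;-K_X-B)\ge \frac{q}{u_{d+1,q}}.
\]
On the other hand, Proposition~\ref{Sl} applied to $L=-K_X-B$ at any closed point $P\in X=(X,B)_{lc}$ yields $\gamma_P(X,B;L)\cdot\sqrt[d]{(L^d)}\le d$. Since $\gamma(X,B;L)\le \gamma_P(X,B;L)$ (the global threshold $\lct(X,B;D)=\min_Q \lct_Q(X,B;D)$ is at most $\lct_P(X,B;D)$, and passing to the infimum over $D$ preserves this inequality), we also obtain
\[
\gamma(X,B;-K_X-B)\cdot \sqrt[d]{(-K_X-B)^d}\le d.
\]

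Chaining the two inequalities gives $\frac{q}{u_{d+1,q}}\sqrt[d]{(-K_X-B)^d}\le d$, which rearranges to the claimed bound. There is no real obstacle here: both components are black-boxed, and the only verification needed is the easy inequality $\gamma\le \gamma_P$ together with the klt/weak-Fano framework that makes Proposition~\ref{Sl} applicable.
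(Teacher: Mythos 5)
Your proof is correct and matches the paper's one-line proof exactly: both combine the lower bound $\gamma(X,B;-K_X-B)\ge q/u_{d+1,q}$ from Theorem~\ref{GB} with the inequality $\gamma\cdot\sqrt[d]{(-K_X-B)^d}\le d$ from Proposition~\ref{Sl}. The additional hypothesis checks you perform (klt from $\mld\ge 1/q$, nef and big from the weak Fano hypothesis plus $a_i>0$, and $\gamma\le\gamma_P$) are all accurate and left implicit in the paper.
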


\begin{proof} By Proposition~\ref{Sl}, $\gamma\cdot \sqrt[d]{(-K_X-B)^d}\le d$.  
\end{proof}

\begin{exmp}\label{mE} 
The lower bound in Theorem~\ref{GB} is sharp. In dimension one, $(\bP^1,\frac{q-1}{q}\cdot\infty)$
is the only example which attains it. A higher dimensional
example is constructed as follows. Let $e_1,\ldots,e_d$ be the standard basis of $\Z^d$, and
$$
e_0=\sum_{i=1}^d-\frac{u_{d+1,q}}{q(1+u_{i,q})}e_i.
$$
Then $e_0,\ldots,e_d$ are primitive vectors in the lattice $N=\Z^d$, which they generate.
Set $a_0=\frac{1}{q}$ and $a_1=\cdots=a_d=1$. This defines a toric log Fano $(X^d,(1-\frac{1}{q})E_0)$ with 
\begin{align*}
\square_{-K_X-B} & = \{m\in \R_{\ge -1}^d;\sum_{i=1}^d \frac{u_{d+1,q}}{1+u_{i,q}}m_i\le 1\} \\
   & =  (-1,\ldots,-1)+\{m\in \R_{\ge 0}^d;\sum_{i=1}^d \frac{1}{1+u_{i,q}}m_i\le \frac{1}{q}\}
\end{align*}
The simplex $\square_{-K_X-B}=\Conv(v_0,\ldots,v_d)$ 
contains in interior the origin of $M$, with barycentric coordinates
$$
0=\frac{q}{u_{d+1,q}}v_0+\sum_{i=1}^d\frac{q}{1+u_{i,q}}v_i.
$$
So $\gamma(X,B;-K_X-B)=\frac{q}{u_{d+1,q}}$. One checks that $\mld(X,B)=\frac{1}{q}$,
$\Bs|-q(K+B)|=\emptyset$ and $(-q(K_X+B))^d=\frac{u_{d+1,q}}{q}$. In particular, the
upper bound in Corollary~\ref{vb} is not sharp.
\end{exmp}

\begin{thm}[cf.~\cite{BB92}]\label{FLF}
Fix $d\ge 1$ and $\epsilon\in (0,1]$. Consider toric proper varieties $X$ such that 
$\dim X=d$ and there exists an invariant effective $\Q$-divisor $B$ 
such that $-K_X-B$ is $\Q$-semiample and $\mld(X,B)\ge \epsilon$. Then $X$ 
belongs to finitely many isomorphism types.
\end{thm}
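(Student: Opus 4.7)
The goal is to bound, uniformly in $(X,B)$, both the set of primitive ray generators $e_i$ of the fan $\Delta$ of $X$ and the combinatorial incidence structure of $\Delta$, up to the action of $\operatorname{GL}(N)\simeq \operatorname{GL}_d(\Z)$. Fix an integer $q\ge 1/\epsilon$, so that $\mld(X,B)\ge 1/q$ and $a_i:=1-b_i\in[1/q,1]$ for every $i$. Let $\square=\square_{-K_X-B}\subset M_\R$ and $P=\square^*\subset N_\R$, so the vertices of $P$ are the points $e_i/a_i$ for $e_i\in\Delta(1)$. Corollary~\ref{vb} bounds $(-K_X-B)^d$, hence $\vol_M(\square)$, above by a constant depending only on $d$ and $q$, and Theorem~\ref{GB} gives the asymmetry bound $\gamma(0\in P)=\gamma(0\in\square)\ge c:=q/u_{d+1,q}$.

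The condition $\mld(X,B)\ge 1/q$ translates into $N\cap\Int((1/q)P)=\{0\}$, and Theorem~\ref{M1T} applied to $(1/q)P$ then yields an upper bound $\vol_N(P)\le (q/c)^d$. To upgrade these analytic bounds into a genuine boundedness statement for $P$ ``up to $\operatorname{GL}(N)$'', I pass to the symmetric core $K:=P\cap(-P)$. The asymmetry inequality $\gamma(0\in P)\ge c$ gives $cP\subseteq K$. Since $X$ is proper, the $e_i$ span $N_\R$, so one can extract $d$ linearly independent primitive vectors $e_{i_1},\dots,e_{i_d}\in P$; the simplex they span together with the origin has $N$-volume $\ge 1/d!$, whence $\vol_N(K)\ge c^d/d!$. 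Combined with $\lambda_1(K)\ge 1/q$ (immediate from the lattice condition on $P$) and Minkowski's second theorem $\lambda_1(K)\cdots\lambda_d(K)\vol_N(K)\le 2^d$, all successive minima $\lambda_i(K)$ land in a bounded interval $[1/q,L(d,q)]$. A basis of $N$ adapted to these successive minima (in the classical sense of the geometry of numbers) then places $K$, and hence $P\subseteq K/c$, inside a box of bounded dimensions after a change of basis in $\operatorname{GL}(N)$.

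Finally, since $a_i\le 1$, each primitive generator $e_i=a_i\cdot(e_i/a_i)$ lies on the segment from $0$ to the vertex $e_i/a_i$ of $P$, so $e_i\in P$. Up to $\operatorname{GL}(N)$, every ray of $\Delta$ is therefore drawn from a fixed finite set of primitive lattice vectors. A complete fan is determined by its rays together with the combinatorial incidence data recording which subsets span cones; both choices lie in finite sets, giving finitely many fans $\Delta$ and hence finitely many isomorphism types of $X$. The main obstacle is the geometry-of-numbers step in the middle paragraph: the volume upper bound (from Van der Corput) and the asymmetry bound (from Theorem~\ref{GB}) are not sufficient on their own, and one genuinely needs the volume lower bound arising from completeness of the fan in order to control all successive minima of the symmetric core simultaneously.
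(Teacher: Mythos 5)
Your plan is correct and, at the strategic level, coincides with the paper's proof: both exploit the asymmetry bound of Theorem~\ref{GB}, the lattice interior condition coming from $\mld(X,B)\ge 1/q$, and Van der Corput's Theorem~\ref{M1T} to produce a volume bound, and then pass through geometry of numbers to reach finiteness of the set of primitive ray generators up to $\GL(N)$, and hence finiteness of fans. The routes diverge at the symmetrization step. The paper works with the difference body $C-C$, where $C=\Conv(\Delta(1))\subseteq P$ is a lattice polytope; after bounding $\vol_N(P-P)$ (hence $\vol_N(C-C)$), it invokes the standard finiteness of full-dimensional symmetric lattice polytopes of bounded volume with $0$ in the interior. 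You instead pass to the symmetric core $K=P\cap(-P)$ of the (in general non-lattice) polytope $P=\square^*$, and unpack the finiteness by hand: the inclusion $cP\subseteq K$ with $c=q/u_{d+1,q}$, the lower bound $\vol_N(K)\ge c^d/d!$ from completeness of the fan, the lower bound $\lambda_1(K)\ge 1/q$, and Minkowski's second theorem together pin all successive minima of $K$ in a fixed interval, after which a reduced basis of $N$ adapted to $K$ places $P$ inside a bounded box. This is a self-contained replacement for the finiteness theorem the paper cites implicitly, at the cost of running the successive-minima machinery explicitly; your closing caveat is apt, since without the volume lower bound (equivalently, without using completeness of the fan) the upper bound on $\lambda_d(K)$ would not follow and the box argument would fail. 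Both the paper's lattice-polytope route and your reduced-basis route are sound.
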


\begin{proof} Let $\gamma=\gamma(0\in \square_{-K_X-B})=\gamma(0\in P_{-K_X-B})$. By Theorem~\ref{GB},
$\gamma \ge \gamma(d,\epsilon)>0$. We have $\epsilon\gamma(P-P)\subseteq \epsilon P$.
Therefore $\{0\}=N\cap \Int(\epsilon \gamma (P-P))$. Theorem~\ref{M1T} gives
$$
\vol_N(P-P)\le \frac{1}{\gamma^d\epsilon^d}.
$$
Let $C$ be the convex hull of $\Delta(1)$. Since $C\subseteq P$, we deduce
that $\vol_N(C-C)$ is bounded above. Since $C$ is a lattice polytope, it follows that the pair
$(N,C-C)$ belongs to finitely many isomorphism types. There exist only finitely many fans $\Delta$ 
with given $\Delta(1)$. Therefore $(N,\Delta)$ belongs to finitely many isomorphism types.
\end{proof}


\subsection{Examples}

Consider toric log Fano varieties $(X,B)$, of Picard number one, and such that $X$
has no nontrivial toric finite covers which are \'etale in codimension one. 
Let $X=T_N\emb(\Delta)$, $\dim N=d$. Then $\Delta(1)=\{e_0,\ldots,e_d\}$,
where $e_0,\ldots,e_d\in N$ are primitive and generate the lattice $N$, and no
$d$ of them are linearly dependent. The log discrepancies in invariant prime divisors
are rational numbers $a_0,\ldots,a_d\in [0,1]$, not all zero. We have 
$$
0=\sum_{i=0}^dx_ie_i\ (x_i\in \Q_{>0},\sum_i x_i=1).
$$
The vector $e_i$ is primitive in the lattice generated by $e_0,\ldots,e_d$ if and only if 
$n_i=1$, where 
$$
n_i=\frac{\gcd(qx_0,\ldots,\widehat{qx_i},\ldots,qx_d)}{\gcd(qx_0,\ldots,qx_d)} \ (q\ge 1,qx\in \Z^{d+1}).
$$
Up to isomorphism, $(X,B)$ is uniquely determined by $(x_i)_i$ and $(a_i)_i$. 
The dual polytope $\square_{-K_X-B}^*$ is the simplex with vertices $e_i/a_i$.

The dual description of $(X,B)$ is: $0\in \square\subset M_\R$ is a rational simplex with vertices 
$v_0,\ldots,v_d$, and $0$ lies at distance $a_i$ to the face of $\square$ opposite $v_i$ (distance 
measured with respect to $e_i$, the primitive interior direction normal to the face opposite to $v_i$). Let 
$$
0=\sum_{i=0}^d\gamma_i v_i\ (\gamma_i\ge 0,\sum_i \gamma_i=1).
$$
Then $\gamma_i=\frac{a_i}{w_i}$, where $w_i=\width(\square;e_i)$. The above data are related as follows:

- $\langle v_i,e_j\rangle+a_j=0$ for $i\ne j$, and $\langle v_j,e_j\rangle+a_j=w_j$.

- $(\sum_i a_ix_i)(\sum_i \frac{1}{w_i})=1$.

- $x_j=\frac{\frac{1}{w_j}}{\sum_{i=0}^d\frac{1}{w_i}}$, $\gamma_j=\frac{a_jx_j}{\sum_{i=0}^da_ix_i}$,
$w_j=\frac{\sum_i a_ix_i}{x_j}$.

We have $K_X+B+\sum_{i=0}^da_iE_i\sim 0$. For $0\le i\le d$, set $D_i=(\chi^{v_i})-K_X-B=
w_iE_i$. So $K_X+B+D_i\sim_\Q 0$ and $\lct(X,B;D_i)=\gamma_i$. 

Note that $B=0$ if and only if $a_i=1$ for all $i$, and then $x_i=\gamma_i=\frac{1}{w_i}$ for all $i$.

\begin{lem}
$r(K_X+B)$ is Cartier if and only if $\Bs |-r(K_X+B)|=\emptyset$, if and only if $ra_i,rw_i\in \Z$ for every $i$.
\end{lem}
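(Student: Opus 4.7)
The plan is to translate all three conditions into data about the vertices $v_j$ of the polytope $\square_{-K_X-B}$, and then verify (i)$\Leftrightarrow$(iii) by a direct integrality calculation, and (i)$\Leftrightarrow$(ii) by combining the general fact that a globally generated Weil divisor is Cartier with the toric fact that a nef Cartier divisor on a complete toric variety is basepoint-free.

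First I would write $-r(K_X+B)=\sum_{i=0}^d ra_iE_i$, and observe that on the affine chart $U_{\sigma_j}$ corresponding to the maximal cone $\sigma_j=\mathrm{cone}(e_i:i\ne j)$, the Cartier data of $-r(K_X+B)$, if it exists, must be the character $\chi^{rv_j}$, where $v_j=-\psi_{\sigma_j}\in M_\Q$ is the opposite vertex of $\square_{-K_X-B}$ identified in the preamble. Therefore $r(K_X+B)$ is Cartier iff every $rv_j$ lies in $M$. Because the primitive rays $e_0,\ldots,e_d$ generate $N$ as a $\Z$-module (by the running assumption on $X$), the condition $rv_j\in M$ reduces to checking $\langle rv_j,e_i\rangle\in\Z$ for every $i$; using the formulas $\langle v_j,e_i\rangle=-a_i$ for $i\ne j$ and $\langle v_j,e_j\rangle=w_j-a_j$ recalled just before the lemma, and letting $j$ range over $0,\ldots,d$, this collects exactly into the conditions $ra_i,rw_i\in\Z$ for every $i$. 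This gives (i)$\Leftrightarrow$(iii).

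For (i)$\Rightarrow$(ii), once the $rv_j$ are integral they provide global sections $\chi^{rv_j}\in\Gamma(X,\cO_X(-r(K_X+B)))$ whose divisors of zeros are $rw_jE_j$; their common zero locus is $\bigcap_j E_j$, which is empty because no cone of $\Delta$ contains all $d+1$ rays. Hence $\Bs|-r(K_X+B)|=\emptyset$. For (ii)$\Rightarrow$(i), basepoint-freeness forces the rank-one reflexive sheaf $\cO_X(-r(K_X+B))$ to be generated by its global sections, hence to be invertible, so $-r(K_X+B)$ is Cartier.

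There is no serious obstacle here; the argument is a direct combinatorial unpacking of the statement. The only step that deserves care is the appeal to the running hypothesis (that $X$ admits no nontrivial toric covers \'etale in codimension one) to ensure that $e_0,\ldots,e_d$ generate $N$ as a lattice, so that integrality of the $\Q$-vector $rv_j\in M_\Q$ is already detected by the $d+1$ integer pairings $\langle rv_j,e_i\rangle\in\Z$.
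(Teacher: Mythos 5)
The paper states this lemma without proof, so I am evaluating your argument on its own terms.

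Your proof of (i)$\Leftrightarrow$(iii) is correct and clean: using that $e_0,\ldots,e_d$ generate $N$ (which the paper does assert directly as a consequence of the no-cover hypothesis), integrality of $rv_j$ is equivalent to integrality of the $d+1$ pairings $\langle rv_j,e_i\rangle$, and the dictionary $\langle v_j,e_i\rangle=-a_i$ ($i\ne j$), $\langle v_j,e_j\rangle=w_j-a_j$ unwinds this to $ra_i,rw_i\in\Z$ as $j$ varies. Your proof of (i)$\Rightarrow$(ii) is also correct: the invariant sections $\chi^{rv_j}$ have divisors of zeros $rw_jE_j$, and since $\{e_0,\ldots,e_d\}$ is not a cone of $\Delta$, $\bigcap_j E_j=\emptyset$.

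The one soft spot is (ii)$\Rightarrow$(i). The principle you invoke --- that a globally generated rank-one reflexive sheaf is invertible --- is not true in general. On an affine normal variety every coherent sheaf is generated by its global sections (the evaluation map $\Gamma(X,\cF)\otimes\cO_X\to\cF$ is already an isomorphism), yet non-Cartier Weil divisors certainly exist; global generation only controls the number of generators of $\cO_X(D)_x$ modulo $\fm_x$, not whether that number is one. What you actually want is the sharper statement that $\Bs|D|=\emptyset$, with $\Bs|D|=\bigcap_{D'\in|D|}\Supp D'$, directly implies $D$ is Cartier: for each $x$ choose $D'\in|D|$ with $x\notin\Supp D'$; then $\cO_X(D)\cong\cO_X(D')$ restricts to the trivial line bundle on the open set $X\setminus\Supp D'$, so $\cO_X(D)$ is invertible at $x$. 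This bypasses global generation entirely (in fact it shows that emptiness of the base locus in this divisorial sense is strictly stronger than sheaf-theoretic global generation when the divisor is only Weil). With this replacement the argument is complete; everything else is sound.
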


\begin{lem}
$\gamma(X,B;-K_X-B)=\min_{i=0}^d \gamma_i$.
\end{lem}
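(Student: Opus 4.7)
The plan is to apply Theorem~\ref{mSa} with $L = -K_X - B$, so that
$$
\gamma(X,B;-K_X-B) = \min_i \gamma_{E_i}(X,B;-K_X-B),
$$
and then identify each $\gamma_{E_i}(X,B;-K_X-B)$ with the $\gamma_i$ introduced in the dual description. Since the hypotheses of Theorem~\ref{mSa} require $|n(-K_X-B)|\ne\emptyset$ for some $n\ge 1$, I would first observe that $-K_X-B$ is ample (Picard number one plus $-K_X-B$ $\Q$-semiample and big, or directly: $\square_{-K_X-B}^*$ is the simplex with vertices $e_i/a_i$, which is full-dimensional), so this is automatic.

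Next I would compute the width $w_{E_i}(-K_X-B)$ from the combinatorial formula
$$w_{E_i}(L) = \max_{m\in\square_L}\langle m,e_i\rangle + l_i,$$
applied to $L=-K_X-B=\sum_j a_j E_j$, so $l_i=a_i$. Using the relations stated in the excerpt, $\langle v_j,e_i\rangle + a_i = 0$ for $j\ne i$ while $\langle v_i,e_i\rangle + a_i = w_i$, so the maximum on the simplex $\square_{-K_X-B}=\Conv(v_0,\ldots,v_d)$ is attained at $v_i$ and equals $w_i - a_i$. Hence $w_{E_i}(-K_X-B) = w_i$.

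Finally, since $B = \sum_j (1-a_j)E_j$, we have $1-b_i = a_i$, so the toric formula for $\gamma_{E_i}$ gives
$$
\gamma_{E_i}(X,B;-K_X-B) \;=\; \frac{1-b_i}{w_{E_i}(-K_X-B)} \;=\; \frac{a_i}{w_i} \;=\; \gamma_i,
$$
where the last equality is one of the identities listed for the $\gamma_j$. Substituting into Theorem~\ref{mSa} yields $\gamma(X,B;-K_X-B)=\min_i \gamma_i$, as desired. There is no real obstacle: everything reduces to the bookkeeping identifying the width of $\square_{-K_X-B}$ along $e_i$ with $w_i$, which is immediate from the simplex structure; an alternative (and essentially equivalent) route is to invoke the duality $\gamma(0\in\square_{-K_X-B}) = \gamma(0\in\square_{-K_X-B}^*)$ together with the fact that $\gamma(0\in S) = \min_i \gamma_i$ for a simplex $S$ with barycentric coordinates $(\gamma_i)$, using $\gamma(X,B;-K_X-B)=\gamma(0\in\square_{-K_X-B})$ from the toric computation of the $\alpha$-invariant.
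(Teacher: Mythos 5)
Your proof is correct, and you've actually outlined both a valid route and the route the paper implicitly has in mind. The main argument via Theorem~\ref{mSa} is sound: $1-b_i=a_i$, the width formula $w_{E_i}(-K_X-B)=\max_{m\in\square}\langle m,e_i\rangle+a_i$ evaluates to $w_i$ since the maximum of $\langle m,e_i\rangle$ over $\square=\Conv(v_0,\ldots,v_d)$ is $w_i-a_i$, attained at $v_i$ (one also needs $f_{E_i}(-K_X-B)=0$, which follows from $\langle v_j,e_i\rangle+a_i=0$ for $j\ne i$, so that the stable fixed part vanishes and $w_{E_i}(-K_X-B)=\width(\square;e_i)=w_i$). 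This gives $\gamma_{E_i}=a_i/w_i=\gamma_i$ and the conclusion. The second route you mention at the end is the shorter and more direct one and matches what the paper records: $\gamma(X,B;-K_X-B)=\gamma(0\in\square)$ was already established in the ``Toric weak log Fano varieties'' subsection, $\square$ is itself a simplex with $0$ at barycentric coordinates $(\gamma_i)$, and Section 4 records $\gamma(P\in S)=\min_i\gamma_i$ for such a configuration. Note that in this second route the duality step $\gamma(0\in\square)=\gamma(0\in\square^*)$ is superfluous: $\square$ is already a simplex (the variety has Picard number one and its vertices are the $v_i$), so you can read off $\min_i\gamma_i$ directly from $\square$ without passing to $\square^*$.
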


\begin{lem}\label{mldf}
Let $0<\epsilon\le \min_{i=0}^da_i$. Then $\mld(X,B)\ge \epsilon$ if and only if there does not exist
an integer $n\ge 0$ such that $\sum_i\{(n+1)x_i\}=1$, $\{(n+1)x\}\ne x$, and
$$
\sum_{i=0}^d a_i\{(n+1)x_i\}-\epsilon< \min_{j=0}^d w_j \{(n+1)x_j\}.
$$
The fractional parts of vectors are defined componentwise.
\end{lem}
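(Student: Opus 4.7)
The plan is to translate conditions (i)--(iii) into a single inequality on the toric log discrepancy of a certain lattice element, and then show conversely that every primitive geometric valuation violating $\mld(X,B)\ge \epsilon$ arises this way. The lattice element in question is
\[
e^{(n)} := \sum_{i=0}^d \{(n+1)x_i\}\, e_i \in N,
\]
which lies in $N$ because $0 = \sum_i(n+1)x_ie_i$ gives $e^{(n)} = -\sum_i \lfloor(n+1)x_i\rfloor e_i$.

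First I would establish a log discrepancy formula: for $e = \sum_i c_i e_i$ with $c_i\ge 0$ not all zero, if $e' \in N$ is the primitive vector on the ray $\R_{\ge 0} e$ with $e = ke'$, $k\ge 1$, then
\[
k\cdot a(E_{e'}; X, B) = \sum_i c_ia_i - \min_j c_jw_j.
\]
This follows by shifting along $0 = \sum_i x_i e_i$: setting $t = \min_j c_j/x_j$ makes $c_i - tx_i\ge 0$ with one zero coordinate, producing the canonical cone representative, and then $\sum_i(c_i-tx_i)a_i = \sum_i c_ia_i - t\sum_i x_ia_i$, where $t\sum_i x_ia_i = \min_j c_jw_j$ by the identity $x_jw_j = \sum_i x_ia_i$. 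Substituting $c_i = \{(n+1)x_i\}$ shows (iii) is equivalent to $a < \epsilon/k \le \epsilon$ for the primitive valuation associated to $e^{(n)}$. Conditions (i) and (ii) together force $e^{(n)}\ne 0$: if $e^{(n)} = 0$, the kernel description $\ker(\Z^{d+1}\to N)\otimes\R = \R x$ gives $(\{(n+1)x_i\})_i = cx$ for some scalar $c$; summing, condition (i) forces $c = 1$, which is $\{(n+1)x\} = x$, contradicting (ii). Hence a bad $n$ immediately yields $\mld(X,B) < \epsilon$.

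For the converse, assume $\mld(X,B) < \epsilon$ and pick a primitive $e\in N\setminus 0$ with $a(E_e;X,B) < \epsilon$. Since $\epsilon\le\min_ia_i$: first $e \ne e_j$ (as $a(E_{e_j}) = a_j \ge \epsilon$); second, with canonical $c_i\ge 0$, the bound $\sum_i c_ia_i = a(E_e) < \min_ia_i$ forces $\sum_i c_i < 1$. Setting $\mu_i := c_i + (1-\sum_jc_j)x_i$ then gives a representation $e = \sum_i\mu_ie_i$ with all $\mu_i \in (0,1)$ and $\sum_i\mu_i = 1$; this $\mu$ is the unique representative of $\phi^{-1}(e)$ on the affine hyperplane $\{\sum = 1\}$, where $\phi\colon \Z^{d+1}\to N$, $(\xi_i)_i\mapsto \sum_i\xi_ie_i$. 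To find $n\ge 0$ with $\{(n+1)x\} = \mu$, pick any integer lift $\xi\in\phi^{-1}(e)$; then $\mu = \xi + (1-\sum_j\xi_j)x$, so $(n+1)x - \mu = (n+\sum_j\xi_j)x - \xi$ lies in $\Z^{d+1}$ iff $q$ divides $n + \sum_j\xi_j$, where $q\ge 1$ is the smallest positive integer with $qx\in\Z^{d+1}$. Choosing any such $n\ge 0$ and using $\mu\in[0,1)^{d+1}$ gives $\mu = \{(n+1)x\}$ and $e^{(n)} = e$; conditions (i), (ii), (iii) all follow.

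The main obstacle is this last arithmetic step: verifying that the single divisibility condition $q\mid n+\sum_j\xi_j$ guarantees $(n+1)x_i - \mu_i \in \Z$ for \emph{every} index $i$ simultaneously. This rests on the fact $\gcd(qx_0,\dots,qx_d) = 1$, which is immediate from the minimality of $q$ and implies $\lcm_i\bigl(q/\gcd(q,qx_i)\bigr) = q$, so that the a priori family of coordinatewise divisibility conditions collapses to one.
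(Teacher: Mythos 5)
Your proof is correct and follows essentially the same route as the paper's: establish the toric log discrepancy formula $a(E_{e'})=\frac{1}{k}\bigl(\sum_i c_i a_i-\min_j c_j w_j\bigr)$, identify the lattice points of $\Conv(e_0,\dots,e_d)$ other than the $e_i$ with the fractional-part vectors $\{(n+1)x\}$, and translate the inequality. You supply the bijection argument that the paper merely asserts, and your observation that $a(E_e)<\min_i a_i$ forces the canonical coordinates to satisfy $\sum_i c_i<1$ is the same geometric fact the paper phrases as $a\cdot\square^*\subseteq C$. One remark: the worry in your last paragraph is unfounded — once you know $q x\in\Z^{d+1}$, the implication $q\mid n+\sum_j\xi_j\Rightarrow(n+\sum_j\xi_j)x-\xi\in\Z^{d+1}$ is immediate componentwise, and the converse is exactly the minimality of $q$; the fact $\gcd(qx_0,\dots,qx_d)=1$ is a consequence, not a prerequisite, and no lcm collapsing is needed.
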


\begin{proof} We clearly have $\mld(X,B)=a\le \min_{i=0}^da_i$. Log discrepancies in toric valuations
are computed as follows: if $e=\sum_{i=0}^dt_ie_i$ is a primitive vector in $N$, the induced toric
valuation $E_e$ has log discrepancy
$$
a(E_e;X,B)=\inf\{\epsilon>0;\sum_{i=0}^dt_ie_i \in \epsilon \cdot \square^*\} 
= \max_{j=0}^d \sum_{i=0}^d a_it_i-w_j t_j.
$$
We have $a\cdot \square^*\subseteq C=\Conv(e_0,\ldots,e_d)$. 
Therefore the primitive vectors which attain $a$ must be contained in $C$. But $N\cap C$
is parametrized as follows:
$$
N\cap C\setminus \{e_i;i\}=\{\sum_i \{(n+1)x_i\}e_i= \sum_i -\lfloor (n+1)x_i\rfloor e_i;
n\ge 0, \sum_i\{(n+1)x_i\}=1 \}.
$$
Note $\sum_i\{(n+1)x_i\}=1$ if and only if $\sum\lfloor (n+1)x_i\rfloor=n$. And 
$\sum_i \{(n+1)x_i\}e_i=0$ if and only if $\{(n+1)x\}\ne x$, if and only if $nx\in \Z^{d+1}$. We obtain
$$
a=\min(\min_{i=0}^da_i, \min_{\sum_i\{(n+1)x_i\}=1,\{(n+1)x\}\ne x} \sum_i a_i\{(n+1)x_i\}-\min_j w_j \{(n+1)x_j\})
$$
\end{proof}

Lemma~\ref{mldf} has a geometric reformulation: let $S=\{x\in \R_{\ge 0}^{d+1};\sum_{i=0}^dx_i=1\}$,
with vertices $P_0,\ldots,P_d$. Our point $x=\sum_i x_i P_i$ lies in the interior of $S$. 
Let $S_\epsilon(x)$ be the simplex with vertices $Q_i=(1-\frac{\epsilon}{a_i})x+\frac{\epsilon}{a_i}P_i\ (0\le i\le d)$. 
It is a neighborhood of $x$ in $S$ obtained by sliding each vertex of $S$ towards $x$, with a certain weight.
Then:
\begin{itemize}
\item $\mld(X,B)\ge \epsilon$ if and only if $\Int S_\epsilon(x)$ contains no $\{(n+1)x\}$ other than $x$.
\item $x=\sum_i \gamma_i Q_i$.
\end{itemize}
So both $\mld(X,B)$ and $\gamma(X,B;-K_X-B)$ are encoded by the neighborhood $x\in S_\epsilon(x)$.
{\em The condition $\mld(X,B)\ge \epsilon$ means that the rational point $x$ is badly approximated by the
fractional parts of its multiples, and Theorem~\ref{FLF} states that such $x$ belong to a finite set}.

\begin{lem}
Let $\epsilon\le \min_{i=0}^da_i$. Then $\mld(X,B)<\epsilon$ if and only
if one of the following equivalent conditions hold:
\begin{itemize}
\item[a)] There exists $n\ge 1$ such that $\sum_{i=0}^d\{(n+1)x_i\}=1$, $\{(n+1)x\}\ne x$, and 
$$
\frac{\sum_{i=0}^d a_i\{(n+1)x_i\}-\epsilon}{\sum_{i=0}^da_ix_i} < \min_{j=0}^d\frac{\{(n+1)x_j\}}{x_j}.
$$
\item[b)] There exists $z\in \N^{d+1}$ such that $\max_j w_jz_j-\sum_i a_iz_i\in (0,\epsilon)$.
\end{itemize}
Moreover, $z=\lfloor (1+|z|)x\rfloor$. So if $z$ exists, it is uniquely determined by $|z|$ and $x$.
If $qx\in \Z^{d+1}$, b) can be decided by considering only the multiples $x,2x,\ldots,(q-1)x$.
\end{lem}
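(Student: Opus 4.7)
The plan is to obtain condition (a) as a direct reformulation of the negation of Lemma~\ref{mldf}, and to bridge between (a) and (b) via the change of variables $z_i=\lfloor(n+1)x_i\rfloor$ (so that $n=|z|$), controlled by the balancing identity $w_jx_j=\sum_i a_ix_i$ which holds for every index $j$ by the definition of $w_j$. Indeed, negating Lemma~\ref{mldf}, $\mld(X,B)<\epsilon$ is equivalent to the existence of some $n\ge 0$ with $\sum_i\{(n+1)x_i\}=1$, $\{(n+1)x\}\ne x$, and $\sum_i a_i\{(n+1)x_i\}-\min_j w_j\{(n+1)x_j\}<\epsilon$; the case $n=0$ yields $\{x\}=x$ (as $0<x_i<1$) and is therefore excluded, so $n\ge 1$. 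Dividing this last inequality by $\sum_i a_ix_i>0$ and substituting $w_j=\sum_i a_ix_i/x_j$ produces the asymmetric quotient form in (a) verbatim.

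For the equivalence (a)$\Leftrightarrow$(b), setting $z_i=\lfloor(n+1)x_i\rfloor$ makes $\sum_i\{(n+1)x_i\}=1$ equivalent to $|z|=n$. Expanding $\{(n+1)x_i\}=(|z|+1)x_i-z_i$ and applying the balancing identity, one finds
\[
\sum_i a_i\{(n+1)x_i\}-\min_j w_j\{(n+1)x_j\}=\max_j w_jz_j-\sum_i a_iz_i,
\]
and the exclusion $\{(n+1)x\}\ne x$, equivalently $z\ne|z|x$, is precisely strict positivity of this quantity (again by the balancing identity). So (a) produces a $z$ witnessing (b); conversely, given $z\in\N^{d+1}$ from (b) with $n:=|z|$, one recovers (a), provided $z=\lfloor(|z|+1)x\rfloor$. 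This ``moreover'' clause is the main obstacle.

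For the moreover, my plan is: with $\lambda:=\max_j z_j/x_j$ and $t_i:=\lambda x_i-z_i\ge 0$, the balancing identity rewrites $\sum_i a_it_i=\max_j w_jz_j-\sum_i a_iz_i<\epsilon\le\min_j a_j$, forcing $\sum_i t_i<1$, i.e.\ $\lambda<|z|+1$, i.e.\ $z_i<(|z|+1)x_i$ strictly for every $i$. Summing $z_i\le\lfloor(|z|+1)x_i\rfloor$ gives $\sum_i\{(|z|+1)x_i\}\le 1$; since this sum is a difference of integers, it equals either $0$ or $1$. The value $1$ forces $z_i=\lfloor(|z|+1)x_i\rfloor$ componentwise by equality in the summed inequality, while the value $0$ makes each $r_i:=(|z|+1)x_i-z_i$ a strictly positive integer with $\sum r_i=1$, which is impossible when $d\ge 1$. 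Uniqueness of $z$ from $|z|$ and $x$ is then visible from $z=\lfloor(|z|+1)x\rfloor$. Finally, if $qx\in\Z^{d+1}$, then shifting $n$ by $q$ shifts $z$ by $qx$ and leaves $\max_j w_jz_j-\sum_i a_iz_i$ unchanged (because $w_j(qx_j)=q\sum_i a_ix_i$ is constant in $j$), so (b) is $q$-periodic in $n$ and it suffices to test the multiples $x,2x,\ldots,(q-1)x$.
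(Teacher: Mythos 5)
Your proof is correct, and since the paper leaves this lemma unproved (it is a direct reformulation of Lemma~\ref{mldf} under the substitution $z=\lfloor(n+1)x\rfloor$), your write-up supplies exactly the argument the author intends. All the individual steps check out: the reformulation to (a) via division by $\sum_i a_ix_i$ and the identity $w_jx_j=\sum_i a_ix_i$; the translation $\sum_i\{(n+1)x_i\}=1\Leftrightarrow |z|=n$ and the identity $\sum_i a_i\{(n+1)x_i\}-\min_j w_j\{(n+1)x_j\}=\max_j w_jz_j-\sum_i a_iz_i$; the observation that $z\ne |z|x$ is equivalent to strict positivity of this quantity (using that $\sum_i a_iz_i=\sum_i(w_iz_i)x_i$ is a convex combination of the $w_jz_j$); and the ``moreover'' argument deducing $z=\lfloor(|z|+1)x\rfloor$ from $\sum_i t_i<1$ and the integrality of $\sum_i\{(|z|+1)x_i\}$, with the value $0$ ruled out by counting. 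The periodicity argument for the $q$-divisibility claim is also sound. The only cosmetic remark is that the range ``$x,2x,\ldots,(q-1)x$'' in the lemma's last sentence is slightly loose (the endpoints $n=0$ and $n=q-1$ are automatically vacuous, so any block of $q$ consecutive values of $n$ suffices), but your periodicity observation covers exactly what is needed.
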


\begin{exmp}
Suppose $B=0$. That is $a_i=1$, $x_i=\gamma_i=\frac{1}{w_i}$.
Then $\mld(X,0)<\epsilon$ if and only if there exists $n\ge 1$ such that 
$\{(n+1)x\}\ne x$ and $1-\epsilon<\min_j \frac{\{(n+1)x_j\}}{x_j}$, if and only if 
there exists $z\in \N^{d+1}\setminus 0$ such that $\frac{z_j}{\epsilon+\sum_{i=0}^dz_i}<x_j$
for every $j$.
\end{exmp}


\end{document}